\newtheorem{theorem}{Theorem}[section]
\newtheorem{remm}{Remark}[section]
\newtheorem{lemma}{Lemma}[section]
\newtheorem{remark}[remm]{Remark}
\newtheorem{proposition}[theorem]{Proposition}
\numberwithin{equation}{section}
\newcommand{\rset}{{\mathbb R}}
\newcommand{\nset}{{\mathbb N}}
\newcommand{\bfdot}{\bf\dot}
\newcommand{\ken}{\ \ }
\newcommand{\ssy}{\scriptscriptstyle}
\newcommand{\half}{\frac{1}{2}}
\begin{document}
\title[]
{Fully-Discrete Finite Element Approximations for\\
a fourth-order linear stochastic parabolic equation \\
with additive space-time white noise\\
II. 2D and 3D Case}
\author[]
{Georgios T. Kossioris$^{\ddag}$ and Georgios E.
Zouraris$^{\ddag}$}
\thanks{%
$^{\ddag}$Department of Mathematics, University of Crete, GR--714
09 Heraklion, Crete, Greece.}
\subjclass{65M60, 65M15, 65C20}
\keywords{finite element method, space-time white noise, Backward
Euler time-stepping, fully-discrete approximations, a priori error
estimates, fourth order parabolic equation, two and three space
dimensions}
%
%
%
\maketitle
%
%
%
\begin{abstract}
We consider an initial- and Dirichlet boundary- value problem for
a fourth-order linear stochastic parabolic equation, in two or
three space dimensions, forced by an additive space-time white
noise.
Discretizing the space-time white noise a modeling error is
introduced and a regularized fourth-order linear stochastic
parabolic problem is obtained.
Fully-discrete approximations to the solution of the regularized
problem are constructed by using, for discretization in space, a
standard Galerkin finite element method based on $C^1$ piecewise
polynomials, and, for time-stepping, the Backward Euler method.
We derive strong a priori estimates for the modeling error and for
the approximation error to the solution of the regularized
problem.
\end{abstract}
%
%
%
\section{Introduction}\label{SECT1}
%
%
%
\subsection{The main problem}
%
%
Let $d=2$ or $3$, $T>0$, $D=(0,1)^d\subset\rset^d$ and
$(\Omega,{\mathcal F},P)$ be a complete probability space. Then we
consider an initial- and Dirichlet boundary- value problem for a
fourth-order linear stochastic parabolic equation formulated,
typically, as follows: find a stochastic function
$u:[0,T]\times{\overline D}\to\rset$ such that
\begin{equation}\label{PARAP}
\begin{gathered}
\partial_tu +\Delta^2u={\dot W}(t,x)
\quad\forall\,(t,x)\in (0,T]\times D,\\
\Delta^{m}u(t,\cdot)\big|_{\ssy\partial D}=0
\quad\forall\,t\in(0,T],
\ken m=0,1,\\
u(0,x)=0\quad\forall\,x\in D,\\
\end{gathered}
\end{equation}
a.s. in $\Omega$, where ${\dot W}$ denotes a space-time white
noise on $[0,T]\times D$ (see, e.g., \cite{Walsh86},
\cite{KXiong}).
The mild solution of the problem above (cf. \cite{Carolina1},
\cite{DapDeb1}), known as `stochastic convolution', is given by
\begin{equation}\label{Lin_MildSol}
u(t,x)=
\int_0^t\!\int_{\ssy D}G(t-s;x,y)\,dW(s,y).
\end{equation}
Here, $G(t;x,y)$ is the space-time Green kernel of the
corresponding deterministic parabolic problem: find a
deterministic function $w:[0,T]\times{\overline D}\to\rset$ such
that
\begin{equation}\label{Det_Parab}
\begin{gathered}
\partial_tw +\Delta^2w = 0
\quad\forall\,(t,x)\in (0,T]\times D,\\
\Delta^{m}w(t,\cdot)\big|_{\ssy\partial D}=0
\quad\forall\,t\in(0,T],\ken m=0,1,
\\
w(0,x)=w_0(x)\quad\forall\,x\in D,\\
\end{gathered}
\end{equation}
where $w_0$ is a deterministic initial condition.
In particular, we have
\begin{equation}\label{deter_green}
w(t,x)=\int_{\ssy D}G(t;x,y)\,w_0(y)\,dy
\quad \forall\,(t,x)\in(0,T]\times{\overline D}
\end{equation}
and
\begin{equation}\label{GreenKernel}
G(t;x,y)=\sum_{\alpha\in{\mathbb N}^d}e^{-\lambda_{\alpha}^2 t}
\,\varepsilon_{\alpha}(x)\,\varepsilon_{\alpha}(y) \quad
\forall\,(t,x,y)\in(0,T]\times{\overline D}\times{\overline D},
\end{equation}
where $\lambda_{\alpha}:=\pi^2\,|\alpha|_{\ssy\nset^d}^2$,
$|\alpha|_{\ssy\nset^d}:=\left(\sum_{i=1}^d\alpha_i^2\right)^{\frac{1}{2}}$
and $\varepsilon_{\alpha}(z):=2^{\frac{d}{2}}
\,\prod_{i=1}^d\sin(\alpha_i\,\pi\,z_i)$ for all $z\in{\overline
D}$ and $\alpha\in\nset^d$.
\subsection{The regularized problem}
Following the approach for a second order one dimensional
stochastic parabolic equation with additive space-time white noise
proposed in \cite{ANZ}, we construct below an approximate initial
and boundary value problem:
\par\noindent
\hskip 1.0truecm\vbox{\hsize 14.0truecm\noindent\strut
For $N_{\star},\,J_{\star}\in\nset$, define the mesh-lengths
$\Delta{t}:=\frac{T}{N_{\star}}$,
$\Delta{x}:=\frac{1}{J_{\star}}$, and the nodes
$t_n:=n\,\Delta{t}$ for $n=0,\dots,N_{\star}$ and
$x_j:=j\,\Delta{x}$ for $j=0,\dots,J_{\star}$. Then, we define the
sets ${\mathcal N}_{\star}:=\{1,\dots,N_{\star}\}$, ${\mathcal
J}_{\star}:=\{1,\dots,J_{\star}\}$, $T_n:=(t_{n-1},t_n)$ for
$n\in{\mathcal N}_{\star}$, $D_j:=(x_{j-1},x_j)$ for
$j\in{\mathcal J}_{\star}$, $D_{\mu}:=\prod_{i=1}^dD_{\mu_i}$ for
$\mu\in{\mathcal J}_{\star}^d$, and $S_{n,\mu}:=T_n\times D_{\mu}$
for $n\in{\mathcal N}_{\star}$ and $\mu\in{\mathcal J}_{\star}^d$.
Next, consider the fourth-order linear stochastic parabolic
problem:
\begin{equation}\label{AC2}
\begin{gathered}
\partial_t{\widehat u} +\Delta^2{\widehat u}={\widehat W}
\quad\text{\rm in}\ken (0,T]\times D,\\
\Delta^{m}{\widehat u}(t,\cdot)\big|_{\ssy\partial D}=0
\quad\forall\,t\in(0,T],\ken m=0,1,\\
{\widehat u}(0,x)=0\quad\forall\,x\in D,\\
\end{gathered}
\end{equation}
\par\noindent
a.e. in $\Omega$, where
\begin{equation*}\label{WNEQ1}
{\widehat W}(t,x) :=\tfrac{1}{\Delta{t}\,(\Delta{x})^d}
\,\sum_{n\in{\mathcal N}_{\star}}\sum_{\mu\in{\mathcal
J}_{\star}}{\mathcal X}_{\ssy S_{n,\mu}}(t,x)\,R^{n,\mu}
\quad\forall\,(t,x)\in [0,T]\times{\overline D},
\end{equation*}
\begin{equation*}
R^{n,\mu}:=\int_{\ssy S_{n,\mu}}1\;dW \quad,\forall\,n\in{\mathcal
N}_{\star}, \ \ \forall\,\mu\in{\mathcal J}_{\star}^d,
\end{equation*}
and ${\mathcal X}_{\ssy S}$ is the index function of
$S\subset[0,T]\times{\overline D}$.
\strut}
\par\noindent
The solution of the problem \eqref{AC2}, according to the standard
theory for parabolic problems (see, e.g, \cite{LMag}), has the
integral representation
\begin{equation}\label{HatUform}
\widehat{u}(t,x)= \int_0^t\!\!\!\int_{\ssy D} G(t-s;x,y)
\,{\widehat W}(s,y)\,dsdy
\quad\forall\,(t,x)\in[0,T]\times{\overline D}.
\end{equation}
\begin{remark}
The properties of the stochastic integral (see, e.g.,
\cite{Walsh86}), yield that $R^{n,\mu}\sim{\mathcal
N}(0,\Delta{t}(\Delta{x})^d)$ for all $(n,\mu)\in{\mathcal
N}_{\star}\times{\mathcal J}_{\star}^d$. Also, we observe that
${\mathbb E}[R^{n,\mu}\,R^{n',\mu'}]=0$ for
$(n,\mu)\not=(n',\mu')$. Thus, the random variables
$(R^{n,\mu})_{(n,\mu)\in{\mathcal N}_{\star}\times{\mathcal
J}_{\star}^d}$ are independent.
\end{remark}
%
%
%
\subsection{Main results of the paper}
%
%
\par
The problem \eqref{PARAP} is a linearized formulation of the
stochastic Cahn-Hilliard equation (cf. \cite{Carolina1},
\cite{DapDeb1}) which was introduced by Cook \cite{Cook} for the
investigation of phase separation in spinodal decomposition (see,
e.g., \cite{KMuth}, \cite{ERD}).
%
%
For the convergence analysis of approximation methods for
fourth-order stochastic parabolic problems driven by a space-time
white noise, we refer the reader: to \cite{Carolina2} which
considers a finite difference method for the stochastic
Cahn-Hilliard equation, and to \cite{Printems}, \cite{Erika02} and
\cite{Erika03} which consider time-stepping methods for a wide
family of evolution problems that includes \eqref{PARAP}, while
the finite element method is not among the space-discretization
techniques considered in \cite{Erika02} and \cite{Erika03}.
Also, we refer the reader to \cite{GK}, \cite{ANZ},
\cite{KloedenShot}, \cite{YubinY05}, \cite{Walsh05} and
\cite{BinLi} for the analysis of the finite element method for
second order stochastic parabolic problems.
%
%
%
\par
In the paper at hand, we extend some of our results for the 1D
case, given in \cite{KZ2008}, to the 2D and 3D case. In
particular, we consider approximations of the solution ${\widehat
u}$ of \eqref{AC2} produced by the Backward Euler time-stepping
combined with a $C^1-$finite element method, and analyze its
convergence to the mild solution of \eqref{PARAP}. This error
splits in two parts: the modeling error that appears by
approximating $u$ by ${\widehat u}$ and the numerical
approximation error for ${\widehat u}$.
The estimation of the modeling error is achieved, in
Theorem~\ref{BIG_Qewrhma1}, by obtaining the inequality
\begin{equation}\label{Model_Error}
\max_{t\in[0,T]}\left\{\,\int_{\ssy\Omega}\left(\int_{\ssy
D}|u(t,x)-{\widehat
u}(t,x)|^2\;dx\right)\,dP\,\right\}^{\frac{1}{2}}\leq\,C\,\left[\,
\epsilon^{-\frac{1}{2}}\,\,\Delta{x}^{\frac{4-d}{2}-\epsilon}
+{\Delta t}^{\frac{4-d}{8}}\,\right].
\end{equation}
%
%
Moving to the direction of building approximations of ${\widehat
u}$, we let $M\in\nset$, $(\tau_m)_{m=0}^{\ssy M}$ the nodes of a
partition of $[0,T]$, i.e. $\tau_0=0$, $\tau_{\ssy M}=T$ and
$\tau_{m-1}<\tau_m$ for $m=1,\dots,M$, and define
$\Delta_m:=(\tau_{m-1},\tau_m)$ and $k_m:=\tau_m-\tau_{m-1}$ for
$m=1,\dots,M$, and set $k_{\ssy\rm max}:=\max_{1\leq{m}\leq{\ssy
M}}k_m$; also, we let $M_h\subset H_0^1(D)\cap H^2(D)$ be a finite
element space consisting of functions which are piecewise
polynomials over a partition of $D$ in triangles or rectangulars
with maximum diameter $h$, and define a discrete biharmonic
operator $B_h:M_h\to M_h$ by
\begin{equation}\label{DLaplacianI}
(B_h\varphi,\chi)_{\ssy 0,D} =(\Delta\varphi,\Delta\chi)_{\ssy
0,D}\quad\forall\,\varphi,\chi\in M_h,
\end{equation}
and the usual $L^2(D)-$projection operator $P_h:L^2(D)\to M_h$ by
\begin{equation*}
(P_hf,\chi)_{\ssy 0,D}=(f,\chi)_{\ssy 0,D}\quad\forall\,\chi\in
M_h,\quad\forall\,f\in L^2(D).
\end{equation*}
To construct approximations to ${\widehat u}$, we  employ the
Backward Euler finite element method which begins by setting
\begin{equation}\label{FullDE1}
{\widehat U}_h^0:=0,
\end{equation}
and, then for $m=1,\dots,M$, finds ${\widehat U}_h^m\in M_h$ such
that
\begin{equation}\label{FullDE2}
{\widehat U}_h^m-{\widehat U}_h^{m-1} +k_m\,B_h{\widehat U}_h^m=
\int_{\ssy\Delta_m} P_h{\widehat W}\,ds.
\end{equation}
Estimating the numerical approximation error for ${\widehat u}$,
we derive first, in Theorem~\ref{Vivo}, the discrete in time
$L^2_t(L^2_{\ssy P}(L^2_x))$ error estimate:
\begin{equation}\label{FDEstim_Euro2a}
\left\{\sum_{m=1}^{\ssy M}k_m\,\int_{\ssy\Omega}\left(\int_{\ssy
D}\big|{\widehat U}^m_h(x)-{\widehat
u}(\tau_m,x)\big|^2\;dx\right)dP\right\}^{\frac{1}{2}}\leq\,
C\,\left[\,(k_{\ssy\max})^{\frac{4-d}{8}}
+\epsilon^{-\frac{1}{2}}\,h^{\nu-\epsilon}\,\right],
\end{equation}
where: $\nu=\nu(r,d)$ is given in \eqref{soho4} and depends on the
space dimension $d$ and a parameter $r\in\{2,3,4\}$ which is
related to the approximation properties of the finite element
spaces $M_h$ (see \eqref{Sh_H2}).
Only for the needs of the proof of \eqref{FDEstim_Euro2a}, we
introduce a space-discrete approximation ${\widehat u}_h$ of
${\widehat u}$ and analyze its convergence in the
$L^{\infty}_t(L_{\ssy P}^2(L^2_x))$ norm (see
Theorem~\ref{coconut12}).
Also, assuming that the nodes $(\tau_m)_{m=0}^{\ssy M}$ are
equidistributed with $\Delta\tau=k_m$ for $m=1,\dots,M$, we arrive
at the discrete in time $L^{\infty}_t(L^2_{\ssy P}(L^2_x))$ error
estimate (see Theorem~\ref{FFQEWR}):
\begin{equation}\label{FDEstim4}
\max_{0\leq{m}\leq{\ssy M}}\left\{\int_{\ssy\Omega}\left(
\int_{\ssy D}\big|{\widehat U}_h^m(x)-{\widehat
u}(\tau_m,x)\big|^2\;dx\right)dP\right\}^{\frac{1}{2}} \leq\,C\,
\left[\,\epsilon_1^{-\frac{1}{2}}\,\Delta\tau^{\frac{4-d}{8}-\epsilon_1}
+\epsilon_2^{-\frac{1}{2}}\,h^{\nu-\epsilon_2}\,\right].
\end{equation}
To get the estimate above, first we consider the Backward-Euler
time-discrete approximations of ${\widehat u}$ and analyze their
convergence in the discrete in time $L^{\infty}_t(L^2_{\ssy
P}(L^2_x))$ norm above (see Theorem~\ref{TimeDiscreteErr1}), and
then, we compare the Backward-Euler fully-discrete with the
Backward-Euler time-discrete approximations of ${\widehat u}$ (see
Proposition~\ref{Tigrakis}). This procedure gives the possibility
to estimate separately the space and the time discretization error
in constrast to the technique used in \cite{YubinY05} and
\cite{BinLi} for second order problems.
The reason of including the error estimate \eqref{FDEstim_Euro2a}
in addition to the stronger norm error estimate \eqref{FDEstim4},
is that the order of convergence is slightly better and it allows
a nonuniform partition of the time interval which is non standard
among references  (cf. \cite{Walsh05}, \cite{Erika02},
\cite{Erika03}, \cite{YubinY05}, \cite{BinLi}).
%
%
\par
We close the section by an overview of the paper.
Section~\ref{SECTIONCHILD} introduces notation, and recalls or
prove several results often used in the paper.
Section~\ref{SECTION2} is
dedicated to the estimation of the modeling error.
Section~\ref{SECTION3} defines the Backward Euler time-discrete
approximations of ${\widehat u}$ and analyzes its convergence.
Section~\ref{SECTION4} defines a finite element space-discrete
approximation of ${\widehat u}$ and estimates its approximation
error.
Section~\ref{SECTION44} contains the error analysis for the
Backward Euler fully-discrete approximations of ${\widehat u}$.
%
%
%
%
%
%
%
\section{Notation and Preliminaries}\label{SECTIONCHILD}
\subsection{Function spaces and operators}\label{Section2.2}
We denote by $L^2(D)$ the space of the Lebesgue measurable
functions which are square integrable on $D$ with respect to
Lebesgue's measure $dx$, provided with the standard norm
$\|g\|_{\ssy 0,D}:= \{\int_{\ssy D}|g(x)|^2\,dx\}^{\frac{1}{2}}$
for $g\in L^2(D)$. The standard inner product in $L^2(D)$ that
produces the norm $\|\cdot\|_{\ssy 0,D}$ is written as
$(\cdot,\cdot)_{\ssy 0,D}$, i.e., $(g_1,g_2)_{\ssy 0,D}
:=\int_{\ssy D}g_1(x)g_2(x)\,dx$ for $g_1$, $g_2\in L^2(D)$.
For $s\in\nset_0$, $H^s(D)$ will be the Sobolev space of functions
having generalized derivatives up to order $s$ in the space
$L^2(D)$, and by $\|\cdot\|_{\ssy s,D}$ its usual norm, i.e.
$\|g\|_{\ssy s,D}:=\bigl\{\sum_{\alpha\in{\mathbb
N}_0^d,\,|\alpha|_{\ssy\nset^d}\leq s}
\|\partial_x^{\alpha}g\|_{\ssy 0,D}^2\bigr\}^{\frac{1}{2}}$ for
$g\in H^s(D)$. Also, by $H_0^1(D)$ we denote the subspace of
$H^1(D)$ consisting of functions which vanish at the boundary
$\partial{D}$ of $D$ in the sense of trace. We note that in
$H_0^1(D)$ the, well-known, Poincar{\'e}-Friedrich inequality
holds, i.e.,
\begin{equation}\label{Poincare}
\|g\|_{\ssy 0,D}\leq\,C_{\ssy P\!F}\,\|\nabla g\|_{\ssy 0,D}
\quad\forall\,g\in H^1_0(D),
\end{equation}
where $\|\nabla{v}\|_{\ssy
0,D}:=\left(\sum_{\alpha\in\nset^d_0,\,|\alpha|_{\ssy\nset^d}=1}
\|\partial_x^{\alpha}v\|_{\ssy
0,D}^2\right)^{\frac{1}{2}}$ for $v\in  H^1(D)$.
\par
The sequence of pairs
$\{\big(\lambda_{\alpha},\varepsilon_k\big)\}_{\alpha\in{\mathbb
N}^d}$ is a solution to the eigenvalue/eigenfunction problem: find
nonzero $\varphi\in H^2(D)\cap H_0^1(D)$ and $\sigma\in\rset$ such
that $-\Delta\varphi=\sigma\,\varphi$ in $D$. Since
$(\varepsilon_{\alpha})_{\alpha\in{\mathbb N}^d}$ is a complete
$(\cdot,\cdot)_{\ssy D}-$orthonormal system in $L^2(D)$, for
$s\in\rset$, a subspace ${\bfdot H}^s(D)$ of $L^2(D)$ (see
\cite{Thomee}) is defined by
\begin{equation*}
{\bfdot H}^s(D):=\left\{v\in L^2(D):\ken \sum_{\alpha\in{\mathbb
N}^d}\lambda_{\alpha}^{s} \,(v,\varepsilon_{\alpha})^2_{\ssy
0,D}<\infty\,\right\}
\end{equation*}
and provided with the norm
$\|v\|_{\ssy{\bfdot H}^s}:=\big(\,{\sum_{\alpha\in{\mathbb N}^d}}
\lambda_{\alpha}^s\,(v,\varepsilon_{\alpha})^2_{\ssy
0,D}\,\big)^{\frac{1}{2}} \quad\forall\,v\in{\bfdot H}^s(D)$.
Let $m\in\nset_0$. It is well-known (see \cite{Thomee}) that
\begin{equation}\label{dot_charact}
{\bfdot H}^m(D)=\big\{\,v\in H^m(D):
\quad\Delta^{i}v\left|_{\ssy\partial D}\right.=0 \quad\text{\rm
if}\ken 0\leq{i}<\tfrac{m}{2}\,\big\}
\end{equation}
and there exist constants $C_{m,{\ssy A}}$ and $C_{m,{\ssy B}}$
such that
\begin{equation}\label{H_equiv}
C_{m,{\ssy A}}\,\|v\|_{\ssy m,D} \leq\|v\|_{\ssy{\bfdot H}^m}
\leq\,C_{m,{\ssy B}}\,\|v\|_{\ssy m,D}\quad \forall\,v\in{\bfdot
H}^m(D).
\end{equation}
Also, we define on $L^2(D)$ the negative norm $\|\cdot\|_{\ssy -m,
D}$ by
\begin{equation*}
\|v\|_{\ssy -m, D}:=\sup\Big\{ \tfrac{(v,\varphi)_{\ssy 0,D}}
{\|\varphi\|_{\ssy m,D}}:\quad \varphi\in{\bfdot H}^m(D)
\ken\text{\rm and}\ken\varphi\not=0\Big\} \quad\forall\,v\in
L^2(D),
\end{equation*}
for which, using \eqref{H_equiv}, it is easy to conclude that
there exists a constant $C_{-m}>0$ such that
\begin{equation}\label{minus_equiv}
\|v\|_{\ssy -m,D}\leq\,C_{-m}\,\|v\|_{{\bfdot H}^{-m}}
\quad\forall\,v\in L^2(D).
\end{equation}
\par
Let ${\mathbb L}_2=(L^2(D),(\cdot,\cdot)_{\ssy 0,D})$ and
${\mathcal L}({\mathbb L}_2)$ be the space of linear, bounded
operators from ${\mathbb L}_2$ to ${\mathbb L}_2$. We say that, an
operator $\Gamma\in {\mathcal L}({\mathbb L}_2)$ is {\sl
Hilbert-Schmidt}, when $\|\Gamma\|_{\ssy\rm
HS}:=\left\{\sum_{k=1}^{\infty} \|\Gamma\varepsilon_k\|^2_{\ssy
0,D}\right\}^{\half}<+\infty$, where $\|\Gamma\|_{\ssy\rm HS}$ is
the so called Hilbert-Schmidt norm of $\Gamma$.
We note that the quantity $\|\Gamma\|_{\ssy\rm HS}$ does not
change when we replace $\{\varepsilon_k\}_{k=1}^{\infty}$ by
another complete orthonormal system of ${\mathbb L}_2$.
It is well known (see, e.g., \cite{DunSch}) that an operator
$\Gamma\in{\mathcal L}({\mathbb L}_2)$ is Hilbert-Schmidt iff
there exists a measurable function $g:D\times D\rightarrow{\mathbb
R}$ such that $\Gamma[v](\cdot)=\int_{\ssy D}g(\cdot,y)\,v(y)\,dy$
for $v\in L^2(D)$, and then, it holds that
%
%
\begin{equation}\label{HSxar}
\|\Gamma\|_{\ssy\rm HS} =\left(\int_{\ssy D}\!\int_{\ssy
D}g^2(x,y)\,dxdy\right)^{\half}.
\end{equation}
Let ${\mathcal L}_{\ssy\rm HS}({\mathbb L}_2)$ be the set of
Hilbert Schmidt operators of ${\mathcal L}({\mathbb L}^2)$ and
$\Phi:[0,T]\rightarrow {\mathcal L}_{\ssy\rm HS}({\mathbb L}_2)$.
Also, for a random variable $X$, let ${\mathbb E}[X]$ be its
expected value, i.e., ${\mathbb E}[X]:=\int_{\ssy\Omega}X\,dP$.
Then, the It{\^o} isometry property for stochastic integrals,
which we will use often in the paper, reads
\begin{equation}\label{Ito_Isom}
{\mathbb E}\left[\Big\|\int_0^{\ssy T}\Phi\,dW\Big\|_{\ssy
0,D}^2\right] =\int_0^{\ssy T}\|\Phi(t)\|_{\ssy\rm HS}^2\,dt.
\end{equation}
\par
For later use, we introduce the projection operator
${\widehat\Pi}:L^2((0,T)\times D) \rightarrow L^2((0,T)\times D)$
defined by
\begin{equation}\label{Defin_L2}
{\widehat \Pi}(g;\cdot)\left|_{\ssy S_{n,\mu}}\right.
:=\tfrac{1}{{\Delta t}\,{\Delta x}^d}\, \int_{\ssy
S_{n,\mu}}g(t,x)\;dtdx, \quad\forall\,n\in{\mathcal N}_{\star},\ \
\forall\,\mu\in{\mathcal J}_{\star}^d,
\end{equation}
for $g\in L^2((0,T)\times D)$, which has the following property:
%
%
%
\begin{lemma}\label{Lhmma1}
For $g\in L^2((0,T)\times D)$, it holds that
\begin{equation}\label{WNEQ2}
\int_0^{\ssy T}\!\!\!\int_{\ssy D}\,{\widehat\Pi}(g;s,y)\,dW(s,y)
=\int_0^{\ssy T}\!\!\!\int_{\ssy D}\,{\widehat
W}(t,x)\,g(t,x)\,dtdx.
\end{equation}
\end{lemma}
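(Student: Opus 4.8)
The plan is to prove the identity \eqref{WNEQ2} by expanding both sides in terms of the building blocks $R^{n,\mu}$ and the cell averages of $g$, and then invoking the definition of the stochastic integral against the elementary (piecewise-constant) function $\widehat\Pi(g;\cdot)$. First I would set $g^{n,\mu}:=\tfrac{1}{{\Delta t}\,{\Delta x}^d}\int_{\ssy S_{n,\mu}}g(t,x)\;dtdx$, so that by definition \eqref{Defin_L2} we have $\widehat\Pi(g;t,x)=\sum_{n\in\mathcal N_\star}\sum_{\mu\in\mathcal J_\star^d}\mathcal X_{\ssy S_{n,\mu}}(t,x)\,g^{n,\mu}$. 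Since $\widehat\Pi(g;\cdot)$ is a deterministic elementary function, the stochastic integral on the left of \eqref{WNEQ2} is, by the very construction of the Walsh/It\^o integral (see \cite{Walsh86}), the finite sum
\begin{equation*}
\int_0^{\ssy T}\!\!\!\int_{\ssy D}\widehat\Pi(g;s,y)\,dW(s,y)
=\sum_{n\in\mathcal N_\star}\sum_{\mu\in\mathcal J_\star^d} g^{n,\mu}\int_{\ssy S_{n,\mu}}1\,dW
=\sum_{n\in\mathcal N_\star}\sum_{\mu\in\mathcal J_\star^d} g^{n,\mu}\,R^{n,\mu}.
\end{equation*}

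Next I would compute the right-hand side of \eqref{WNEQ2} directly. Using the definition of $\widehat W$, namely $\widehat W(t,x)=\tfrac{1}{{\Delta t}\,{\Delta x}^d}\sum_{n,\mu}\mathcal X_{\ssy S_{n,\mu}}(t,x)\,R^{n,\mu}$, and the fact that the cells $S_{n,\mu}$ are pairwise disjoint and cover $(0,T)\times D$ up to a null set, we get
\begin{equation*}
\int_0^{\ssy T}\!\!\!\int_{\ssy D}\widehat W(t,x)\,g(t,x)\,dtdx
=\sum_{n\in\mathcal N_\star}\sum_{\mu\in\mathcal J_\star^d}\frac{R^{n,\mu}}{{\Delta t}\,{\Delta x}^d}\int_{\ssy S_{n,\mu}}g(t,x)\,dtdx
=\sum_{n\in\mathcal N_\star}\sum_{\mu\in\mathcal J_\star^d} R^{n,\mu}\,g^{n,\mu},
\end{equation*}
which is exactly the expression obtained for the left-hand side. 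This establishes \eqref{WNEQ2}.

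There is no serious obstacle here; the statement is essentially a bookkeeping identity, a discrete analogue of "integration by parts" exchanging the roles of $g$ and $\widehat W$. The only point requiring a little care is the first displayed equation above: one must be sure that the stochastic integral of the deterministic step function $\widehat\Pi(g;\cdot)$ does reduce to $\sum_{n,\mu}g^{n,\mu}R^{n,\mu}$, which is immediate from the definition of the It\^o integral on simple integrands together with the identification $R^{n,\mu}=\int_{S_{n,\mu}}1\,dW$ and the linearity of the integral; since $g\in L^2((0,T)\times D)$ each $g^{n,\mu}$ is finite and the sum is finite, so no convergence issue arises. I would also note in passing that $\widehat\Pi(g;\cdot)$ indeed lies in $L^2((0,T)\times D)$ by Cauchy--Schwarz on each cell, so the left-hand side of \eqref{WNEQ2} is well defined.
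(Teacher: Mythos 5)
Your proof is correct and follows essentially the same route as the paper: both arguments expand $\widehat\Pi(g;\cdot)$ as a finite linear combination of the indicators $\mathcal X_{\ssy S_{n,\mu}}$, use linearity of the stochastic integral together with $R^{n,\mu}=\int_{S_{n,\mu}}1\,dW$, and reassemble the resulting sum $\sum_{n,\mu}g^{n,\mu}R^{n,\mu}$ into the deterministic integral of $g\,\widehat W$. The only cosmetic difference is that you compute the two sides separately and compare, whereas the paper transforms the left-hand side into the right-hand side in one chain of equalities.
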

%
%
%
%
%
%
%
%
%
%
%
%
%
%
%
%
%
%
\begin{proof}
To obtain \eqref{WNEQ2} we work, using \eqref{Defin_L2} and the
properties of $W$, as follows:
\begin{equation*}
\begin{split}
\int_0^{\ssy T}\!\!\!\int_{\ssy D}{\widehat\Pi}(g;s,y)\,dW(s,y)
=&\tfrac{1}{\Delta{t}\,(\Delta{x})^d}\, \sum_{n\in{\mathcal
N}_{\star}}\sum_{\mu\in{\mathcal J}_{\star}^d}\Bigr(\int_{\ssy
S_{n,\mu}}g\;dtdx\Bigl)\, \Bigl(\int_0^{\ssy T}\!\!\!\int_{\ssy D}
{\mathcal X}_{\ssy S_{n,\mu}}(s,y)\,dW(s,y)\Bigr)\\
=&\tfrac{1}{\Delta{t}\,(\Delta{x})^d}\, \sum_{n\in{\mathcal
N}_{\star}}\sum_{\mu\in{\mathcal J}_{\star}^d} \Bigr(\int_{\ssy
S_{n,\mu}}g(t,x)\;dtdx\Bigl)\,R^{n,\mu}\\
=&\tfrac{1}{\Delta{t}\,(\Delta{x})^d}\, \sum_{n\in{\mathcal
N}_{\star}} \sum_{\mu\in{\mathcal J}_{\star}^d}\int_0^{\ssy
T}\!\!\!\int_{\ssy D} g(t,x) \,{\mathcal X}_{\ssy S_{n,\mu}}(t,x)
\,R^{n,\mu}\,dtdx\\
=&\int_0^{\ssy T}\!\!\!\int_{\ssy D}g(t,x) \,{\widehat
W}(t,x)\,dtdx.
\end{split}
\end{equation*}
\end{proof}
%
%
%
%
%
%
%
%
%
%
%
%
\par
We close this section, by stating some asymptotic bounds for
series that will often appear in the rest of the paper and for a
proof of them we refer the reader to Appendix~\ref{Gatos_Rex_1}
and Appendix~\ref{Gatos_Rex_2}.
%
%
\begin{lemma}\label{Series_Asym_1}
Let $d\in\{1,2,3\}$ and $c_{\star}>0$. Then, there exists a
constant $C>0$ that depends on $c_{\star}$ and $d$, such that
\begin{equation}\label{ASYM_1}
\sum_{\alpha\in\nset^d}
|\alpha|_{\ssy\nset^d}^{-(d+c_{\star}\epsilon)}
\leq\,C\,\epsilon^{-1}\quad\forall\,\epsilon\in(0,2].
\end{equation}
\end{lemma}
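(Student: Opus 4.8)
The plan is to reduce the $d$-dimensional sum to an integral comparison. First I would split the index set $\nset^d$ into a finite ``core'' part and a ``tail''. The finitely many terms with, say, $|\alpha|_{\ssy\nset^d}\le d$ contribute a bounded quantity, so the whole difficulty lies in the tail $\{\alpha\in\nset^d:\ |\alpha|_{\ssy\nset^d}>d\}$. For the tail I would dyadically decompose according to the Euclidean norm: for $k\ge 1$ let $A_k:=\{\alpha\in\nset^d:\ 2^{k-1}< |\alpha|_{\ssy\nset^d}\le 2^k\}$. A standard lattice-point count gives $\#A_k\le C\,2^{kd}$ (the number of lattice points in a ball of radius $2^k$ is $O(2^{kd})$), while on $A_k$ we have $|\alpha|_{\ssy\nset^d}^{-(d+c_{\star}\epsilon)}\le (2^{k-1})^{-(d+c_{\star}\epsilon)}$. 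Hence the tail is bounded by
\begin{equation*}
\sum_{k\ge 1} C\,2^{kd}\,(2^{k-1})^{-(d+c_{\star}\epsilon)}
= C\,2^{d+c_{\star}\epsilon}\sum_{k\ge 1} 2^{-k\,c_{\star}\epsilon}
= C\,2^{d+c_{\star}\epsilon}\,\frac{2^{-c_{\star}\epsilon}}{1-2^{-c_{\star}\epsilon}}.
\end{equation*}
For $\epsilon\in(0,2]$ the prefactor $2^{d+c_{\star}\epsilon}$ is bounded by a constant depending only on $d$ and $c_{\star}$, so it remains to bound $1/(1-2^{-c_{\star}\epsilon})$ by $C\,\epsilon^{-1}$. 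This last point is the crux of the argument.

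For that estimate I would use elementary calculus: writing $t:=c_{\star}\epsilon$, which ranges over $(0,2c_{\star}]$, one has $1-2^{-t}=1-e^{-t\ln 2}\ge c\,t$ for $t$ in a bounded interval, because $1-e^{-x}$ is concave and positive on $(0,\infty)$, so it lies above the chord joining $(0,0)$ to $(x_0,1-e^{-x_0})$ on $[0,x_0]$; concretely $1-2^{-t}\ge \frac{1-2^{-2c_{\star}}}{2c_{\star}}\,t$ for $t\in(0,2c_{\star}]$. Substituting $t=c_{\star}\epsilon$ gives $1-2^{-c_{\star}\epsilon}\ge c_{\star}'\,\epsilon$ with $c_{\star}':=\frac{1-2^{-2c_{\star}}}{2}>0$, hence $1/(1-2^{-c_{\star}\epsilon})\le (c_{\star}')^{-1}\,\epsilon^{-1}$, which is exactly what is needed. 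Collecting the core bound and the tail bound yields a single constant $C=C(c_{\star},d)$ with $\sum_{\alpha\in\nset^d}|\alpha|_{\ssy\nset^d}^{-(d+c_{\star}\epsilon)}\le C\,\epsilon^{-1}$ for all $\epsilon\in(0,2]$, completing the proof.

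An alternative, perhaps cleaner, route is a direct integral comparison: the function $x\mapsto |x|^{-(d+c_{\star}\epsilon)}$ is radially decreasing on $\rset^d$, so up to shifting indices and handling the finitely many small terms, the tail sum is comparable to $\int_{|x|\ge 1}|x|^{-(d+c_{\star}\epsilon)}\,dx = \omega_{d-1}\int_1^\infty \rho^{-1-c_{\star}\epsilon}\,d\rho = \omega_{d-1}/(c_{\star}\epsilon)$, where $\omega_{d-1}$ is the surface area of the unit sphere in $\rset^d$. This gives the $\epsilon^{-1}$ behaviour immediately with the sharp constant, and the comparison between the sum over the lattice and the integral is justified by monotonicity of the integrand on the appropriate region (comparing each lattice term with the integral over a unit cube placed so that the integrand there dominates the term). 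I expect the main obstacle in either approach to be purely bookkeeping: making the passage from the discrete sum to the continuous bound rigorous near the origin, i.e. correctly isolating the finitely many problematic terms where the monotone-rearrangement/integral comparison is not directly valid, and tracking that the resulting additive constant depends only on $d$ and $c_{\star}$ and not on $\epsilon$. Since the excerpt defers the detailed computation to the appendix, a short clean writeup along these lines should suffice.
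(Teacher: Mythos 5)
Your primary (dyadic) argument is correct and complete: the lattice-point count $\#A_k\le C\,2^{kd}$, the geometric series, and the concavity bound $1-2^{-c_{\star}\epsilon}\ge \tfrac{1-2^{-2c_{\star}}}{2}\,\epsilon$ for $\epsilon\in(0,2]$ together give the stated $\epsilon^{-1}$ blow-up with a constant depending only on $d$ and $c_{\star}$ (the finitely many ``core'' terms are harmless since $\epsilon^{-1}\ge\tfrac12$ on the range considered). The paper proceeds differently: it inducts on the dimension. For $d=1$ it compares the sum with $1+\int_1^{\infty}x^{-(1+c_{\star}\epsilon)}\,dx\le(2+c_{\star}^{-1})\,\epsilon^{-1}$; for $d=2$ and $d=3$ it splits off the lattice points adjacent to the boundary of the positive orthant (where the sum-versus-integral comparison is not directly valid), controls them by the $(d-1)$-dimensional estimate already proved, and bounds the remaining bulk by $\int_{(1,+\infty)^d}|x|^{-(d+c_{\star}\epsilon)}\,dx$, evaluated in polar or spherical coordinates to yield $O\bigl((c_{\star}\epsilon)^{-1}\bigr)$. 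This is essentially the ``alternative, cleaner route'' you sketch at the end, and the bookkeeping you flag there --- isolating the terms near the boundary where the monotone comparison fails --- is exactly what the paper's induction on $d$ accomplishes. The integral-comparison route buys a near-sharp constant of the form $\omega_{d-1}/(c_{\star}\epsilon)$ plus lower-order contributions; your dyadic route is more robust, needing only a crude lattice-point count and no boundary analysis, at the cost of a larger constant. Either writeup would be acceptable.
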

%
%
%
%
%
%
%
%
%
\begin{lemma}\label{Series_Asym_2}
Let $d\in\{2,3\}$ and $\delta>0$. Then there exists a constant
$C>0$ which is independent of $\delta$, such that
\begin{equation}\label{ASYM_2}
\sum_{\alpha\in\nset^d}
\tfrac{1-e^{-\lambda_{\alpha}^2\delta}}{\lambda_{\alpha}^2}
\leq\,C\,\,\,p_d(\delta^{\frac{1}{4}})\,\,\,\delta^{\frac{4-d}{4}},
\end{equation}
where $p_d(s):=1+\sum_{i=1}^ds^i$.
\end{lemma}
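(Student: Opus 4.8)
The plan is to estimate the series $\sum_{\alpha\in\nset^d}\tfrac{1-e^{-\lambda_\alpha^2\delta}}{\lambda_\alpha^2}$, where $\lambda_\alpha=\pi^2|\alpha|_{\ssy\nset^d}^2$, so that the summand is $\pi^{-4}|\alpha|_{\ssy\nset^d}^{-4}\bigl(1-e^{-\pi^4|\alpha|_{\ssy\nset^d}^4\delta}\bigr)$. The basic idea is to split the sum at the threshold $|\alpha|_{\ssy\nset^d}\sim\delta^{-1/4}$: for small $|\alpha|_{\ssy\nset^d}$ the exponential factor is exploited via $1-e^{-x}\leq x$, while for large $|\alpha|_{\ssy\nset^d}$ one simply uses $1-e^{-x}\leq 1$ and the convergence of $\sum|\alpha|_{\ssy\nset^d}^{-4}$ in dimensions $d\le 3$.

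First I would introduce the counting function for lattice points: for $R\ge 1$, the number of $\alpha\in\nset^d$ with $|\alpha|_{\ssy\nset^d}\le R$ is $O(R^d)$, and more precisely I would use a dyadic or integral comparison, i.e. replace sums over shells $\{k-1<|\alpha|_{\ssy\nset^d}\le k\}$ by the volume growth $C(k^d-(k-1)^d)\le C\,k^{d-1}$. Set $K:=\lfloor\delta^{-1/4}\rfloor$ (assuming $\delta$ small; the case $\delta$ bounded away from $0$ is trivial since the whole series converges for $d\le 3$ and $p_d(\delta^{1/4})\delta^{(4-d)/4}$ is then bounded below). Then:
\begin{itemize}
\item[(i)] For $|\alpha|_{\ssy\nset^d}\le K$: bound $1-e^{-\pi^4|\alpha|_{\ssy\nset^d}^4\delta}\le \pi^4|\alpha|_{\ssy\nset^d}^4\delta$, so these terms contribute at most $\delta\sum_{|\alpha|_{\ssy\nset^d}\le K}1 \le C\,\delta\,K^d\le C\,\delta\cdot\delta^{-d/4}=C\,\delta^{(4-d)/4}$.
\item[(ii)] For $|\alpha|_{\ssy\nset^d}>K$: bound the numerator by $1$ and estimate $\sum_{|\alpha|_{\ssy\nset^d}>K}|\alpha|_{\ssy\nset^d}^{-4}$ by comparison with $\int_K^\infty r^{d-1-4}\,dr = \int_K^\infty r^{d-5}\,dr$. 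For $d<4$ this integral equals $\tfrac{K^{d-4}}{4-d}$, giving a contribution $\le C\,K^{d-4}\le C\,\delta^{(4-d)/4}$.
\end{itemize}
Adding (i) and (ii) gives $\sum_{\alpha\in\nset^d}\tfrac{1-e^{-\lambda_\alpha^2\delta}}{\lambda_\alpha^2}\le C\,\delta^{(4-d)/4}$, which is even stronger than the claimed bound since $p_d(\delta^{1/4})\ge 1$; one then weakens it trivially to the stated form (the polynomial factor $p_d$ is presumably kept to unify with related estimates or to handle the regime where $\delta$ is not small, where $\delta^{(4-d)/4}$ alone would be too small but $p_d(\delta^{1/4})\delta^{(4-d)/4}$ stays $\gtrsim$ the bounded series value).

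The main obstacle I anticipate is being careful with the lattice-point counting: the shell bound $\#\{\alpha\in\nset^d:k-1<|\alpha|_{\ssy\nset^d}\le k\}\le C_d k^{d-1}$ must be justified (e.g. by noting each such $\alpha$ lies in the spherical shell of radii $k-1,k$ intersected with the positive orthant, whose volume is $\le C_d k^{d-1}$, plus a boundary-layer argument, or by the cruder $\#\{|\alpha|_{\ssy\nset^d}\le R\}\le\#\{\alpha:1\le\alpha_i\le R\}=\lfloor R\rfloor^d$ used only for the upper cutoff and a telescoping for the tail). A secondary point is handling the boundary case and the regime $\delta\ge 1$ (or $\delta$ of order one) separately, and making sure the constant $C$ is genuinely independent of $\delta$ — which it is, since all estimates above only used $1-e^{-x}\le\min\{x,1\}$ and dimension-dependent geometric constants. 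I would also double-check the edge behaviour in $d=3$, where $d-5=-2$ so the tail integral $\int_K^\infty r^{-2}\,dr=K^{-1}=\delta^{1/4}=\delta^{(4-3)/4}$, consistent with the claimed exponent.
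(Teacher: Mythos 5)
Your proof is correct, but it follows a genuinely different route from the paper's. The paper proves \eqref{ASYM_2} by induction on the dimension: it recalls the one-dimensional bound $\sum_{k}\tfrac{1-e^{-k^4\pi^4\delta}}{k^4\pi^4}\leq C(1+\delta^{1/4})\delta^{3/4}$ from \cite{KZ2008}, then for $d=2$ (and again for $d=3$) splits off the lattice points adjacent to the boundary of the orthant, dominates the remaining sum by the integral $\int_{(1,\infty)^d}\tfrac{1-e^{-\pi^4|x|^4\delta}}{\pi^4|x|^4}\,dx$ using monotonicity of the summand, and evaluates that integral in polar coordinates after the rescaling $z=\pi\delta^{1/4}r$; the polynomial factor $p_d(\delta^{1/4})$ is exactly the bookkeeping of the lower-dimensional boundary contributions accumulated through this induction. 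Your argument instead splits the sum at the single threshold $|\alpha|_{\ssy\nset^d}\sim\delta^{-1/4}$, uses $1-e^{-x}\leq\min\{x,1\}$, and counts lattice points in shells. This is more elementary (no appeal to the 1D result, no polar-coordinate integrals) and in fact delivers the strictly stronger conclusion $C\,\delta^{\frac{4-d}{4}}$ uniformly in $\delta>0$ (for $\delta\geq 1$ the whole series is bounded by $\pi^{-4}\sum_{\alpha}|\alpha|_{\ssy\nset^d}^{-4}<\infty$ while $\delta^{\frac{4-d}{4}}\geq 1$), from which \eqref{ASYM_2} follows since $p_d\geq 1$; the paper's method cannot drop the $p_d$ factor because it is built into its integral computation. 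The only points needing care in your write-up are the ones you already flag: the shell count $\#\{\alpha\in\nset^d:\,k-1<|\alpha|_{\ssy\nset^d}\leq k\}\leq C_d\,k^{d-1}$ (or, more crudely, telescoping the cube count $\#\{|\alpha|_{\ssy\nset^d}\leq k\}\leq k^d$), and replacing $(k-1)^{-4}$ by $16\,k^{-4}$ for $k\geq 2$ in the tail so the comparison with $\int_K^\infty r^{d-5}\,dr$ is legitimate. Both are routine, so the proposal stands as a complete and somewhat sharper alternative proof.
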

%
%
%
%
%
%
%
%
%
%
%
%
%
%
%
%
%
%
%
%
\subsection{Linear elliptic and parabolic
operators}\label{SECTION31}
%
For given $f\in L^2(D)$ let $v_{\ssy E}\in H^2(D)\cap H_0^1(D)$ be
the solution of the boundary value problem
\begin{equation}\label{ElOp1}
\Delta v_{\ssy E} = f \quad\text{\rm in}\ken D,
\end{equation}
and $T_{\ssy E}:L^2(D)\rightarrow H^2(D)\cap H^1_0(D)$ be its
solution operator, i.e. $T_{\ssy E}f:=v_{\ssy E}$, which has the
property
\begin{equation}\label{ElReg1}
\|T_{\ssy E}f\|_{\ssy m,D}\leq \,C_{\ssy E} \,\|f\|_{\ssy m-2, D},
\quad\forall\,f\in H^{\max\{0,m-2\}}(D), \ken\forall\,m\in{\mathbb
N}_0.
\end{equation}
Also, for $f\in L^2(D)$ let $v_{\ssy B}\in H^4(D)$ be the solution
of the following biharmonic boundary value problem
\begin{gather}\label{ElOp2}
\Delta^2v_{\ssy B}=f\quad\text{\rm in}\ken D,\\
\Delta^m v_{\ssy B}\big|_{\ssy\partial D}=0,\quad m=0,1,
\end{gather}
and  $T_{\ssy B}:L^2(D)\rightarrow{\bfdot H}^4(D)$ be the solution
operator of \eqref{ElOp2}, i.e. $T_{\ssy B}f:=v_{\ssy B}$, which
satisfies
\begin{equation}\label{ElBihar2}
\|T_{\ssy B}f\|_{\ssy m,D}\leq \,C\,\|f\|_{\ssy m-4, D},
\quad\forall\,f\in H^{\max\{0,m-4\}}(D), \ken\forall\,m\in{\mathbb
N}_0.
\end{equation}
Due to the type of boundary conditions of \eqref{ElOp2}, we
conclude that
\begin{equation}\label{tiger007}
T_{\ssy B}f= T_{\ssy E}^2f,\quad\forall\,f\in L^2(D),
\end{equation}
which, easily, yields
\begin{equation}\label{TB-prop1}
(T_{\ssy B}v_1,v_2)_{\ssy 0,D} =(T_{\ssy E}v_1,T_{\ssy
E}v_2)_{\ssy 0,D} \quad\forall\,v_1,v_2\in L^2(D).
\end{equation}
%
%
\par
Letting $({\mathcal S}(t)w_0)_{t\in[0,T]}$ be the standard
semigroup notation for the solution $w$ of \eqref{Det_Parab}, we
can easily establish the following property (see, e.g.,
\cite{Thomee}, \cite{Pazy}): for $\ell\in{\mathbb N}_0$, $\beta$,
$p\in{\mathbb R}_0^{+}$ and $q\in[0,p+4\ell]$ there exists a
constant $C>0$ such that:
\begin{equation}\label{Reggo3}
\int_{t_a}^{t_b}(\tau-t_a)^{\beta}\,
\big\|\partial_t^{\ell}{\mathcal S}(\tau)w_0 \big\|_{\ssy {\bfdot
H}^p}^2\,d\tau \leq\,C\, \|w_0\|^2_{\ssy {\bfdot
H}^{p+4\ell-2\beta-2}} \quad\forall\,t_b>t_a\ge0,
\ken\forall\,w_0\in{\bfdot H}^{p+4\ell-2\beta-2}(D).
\end{equation}
%
%
%
\subsection{Discrete spaces and operators}
%
%
For $r\in\{2,3,4\}$, we consider a finite element space
$M_h\subset H_0^1(D)\cap H^2(D)$ consisting of functions which are
piecewise polynomials over a partition of $D$ in triangles or
rectangulars with maximum mesh-length $h$.  We assume that the
space $M_h$ has the following approximation property
\begin{equation}\label{Sh_H2}
\inf_{\chi\in M_h} \|v-\chi\|_{\ssy 2,D}
\leq\,C\,h^{r-1}\,\|v\|_{\ssy r+1,D} \quad\,\forall\,v\in
H^{r+1}(D)\cap H_0^1(D),
\end{equation}
which covers several classes of $C^1$ finite element spaces, for
example the tensor products of $C^1$ splines, the Argyris triangle
elements, the Hsieh-Clough-Tocher triangle elements and the Bell
triangle (cf. \cite{Cia}, \cite{BrScott}).
\par
A finite element approximation $v_{\ssy B,h}\in M_h$ of the
solution $v_{\ssy B}$ of \eqref{ElOp2} is defined by the
requirement
\begin{equation}\label{fem2}
B_hv_{\ssy B,h}=P_hf,
\end{equation}
and we denote by $T_{\ssy B,h}:L^2(D)\to M_h$ the solution
operator of \eqref{fem2}, i.e. $T_{\ssy B,h}f:=v_{\ssy
B,h}=B_h^{-1}P_hf$ for $f\in L^2(D)$. It is easy to verify that
$T_{\ssy B,h}$ is selfadjoint, i.e.,
\begin{equation}\label{adjo_Bh}
(T_{\ssy B,h}f,g)_{\ssy 0,D}=(f,T_{\ssy B,h}g)_{\ssy
0,D}\quad\forall\,f,g\in L^2(D).
\end{equation}
Also, using \eqref{fem2}, \eqref{ElOp2} and \eqref{ElBihar2} we
conclude that
\begin{equation}\label{PanwFragma1}
\begin{split}
\|\Delta T_{\ssy B,h}f\|_{\ssy 0,D}\leq&\,
\|\Delta T_{\ssy B}f\|_{\ssy 0,D}\\
\leq&\,C\,\|f\|_{\ssy -2,D}\quad\quad\forall\,f\in L^2(D).\\
\end{split}
\end{equation}
%
%
%
\par
Applying the standard theory of the finite element method (see,
e.g., \cite{Cia}, \cite{BrScott}) and using \eqref{ElBihar2}, we
get
\begin{equation}\label{C1_FEM}
\|\Delta(T_{\ssy B}f-T_{\ssy B,h}f)\|_{\ssy 0,D}
\leq\,C\,h^{r-1}\,\|f\|_{\ssy r-3,D},\quad\forall\,f\in
H^{\max\{r-3,0\}}(D),
\end{equation}
while error estimates in the $L^2(D)$ norm are obtained in the
proposition below.
%
%
\begin{proposition}\label{H2CASE}
Let $r\in\{2,3,4\}$. Then, it holds that:
\begin{equation}\label{ARA2}
\begin{split}
\|T_{\ssy B}f-T_{\ssy B,h}f\|_{\ssy 0,D} \leq\,C\left\{ \aligned
&h^5\,\|f\|_{\ssy 1,D},\quad r=4\\
&h^4\,\|f\|_{\ssy 0,D},\quad r=3,\\
&h^2\,\|f\|_{\ssy -1,D},\hskip0.15truecm r=2,\\
\endaligned\right.\quad\quad\forall\,f\in H^{\max\{r-3,0\}}(D).
\end{split}
\end{equation}
\end{proposition}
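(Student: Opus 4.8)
The plan is to estimate $T_{\ssy B}f - T_{\ssy B,h}f$ in the $L^2(D)$ norm by a duality argument (Aubin--Nitsche), bootstrapping off the already-available $\Delta$-norm error estimate \eqref{C1_FEM} and the elliptic/biharmonic regularity bounds \eqref{ElReg1}, \eqref{ElBihar2}. Write $e_h := T_{\ssy B}f - T_{\ssy B,h}f$. Fix an arbitrary $g \in L^2(D)$ and let $\psi := T_{\ssy B}g \in {\bfdot H}^4(D)$ be the solution of the biharmonic problem with right-hand side $g$. Using the Galerkin orthogonality built into the definition of $T_{\ssy B,h}$ (namely $(\Delta(T_{\ssy B}f - T_{\ssy B,h}f),\Delta\chi)_{\ssy 0,D}=0$ for all $\chi\in M_h$, which follows from \eqref{DLaplacianI}, \eqref{fem2}, \eqref{ElOp2} after integration by parts against the boundary conditions), one gets
\begin{equation*}
(e_h, g)_{\ssy 0,D} = (\Delta e_h, \Delta\psi)_{\ssy 0,D} = (\Delta e_h, \Delta(\psi - \chi))_{\ssy 0,D} \qquad\forall\,\chi\in M_h,
\end{equation*}
and then Cauchy--Schwarz gives $|(e_h,g)_{\ssy 0,D}| \leq \|\Delta e_h\|_{\ssy 0,D}\,\inf_{\chi\in M_h}\|\Delta(\psi-\chi)\|_{\ssy 0,D}$.

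The second factor is controlled by the approximation property \eqref{Sh_H2} together with $\|\Delta v\|_{\ssy 0,D}\le\|v\|_{\ssy 2,D}$: in fact $\inf_{\chi}\|\Delta(\psi-\chi)\|_{\ssy 0,D}\le C h^{r-1}\|\psi\|_{\ssy r+1,D}\le Ch^{r-1}\|g\|_{\ssy r-3,D}$ by \eqref{ElBihar2} with $m=r+1$. For the first factor, I apply \eqref{C1_FEM}: $\|\Delta e_h\|_{\ssy 0,D}\le Ch^{r-1}\|f\|_{\ssy r-3,D}$. Combining, $|(e_h,g)_{\ssy 0,D}|\le Ch^{2(r-1)}\|f\|_{\ssy r-3,D}\,\|g\|_{\ssy r-3,D}$, and taking the supremum over $g$ would give $\|e_h\|_{\ssy -(r-3),D}\le Ch^{2(r-1)}\|f\|_{\ssy r-3,D}$ — but the dual norm on the left is too weak. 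To reach the genuine $L^2$ norm I instead note that the argument above only needs $\psi=T_{\ssy B}g$ to be approximated well, so the right choice is to push all smoothness onto $\psi$: take $g\in L^2(D)$ arbitrary and use instead $\inf_\chi\|\Delta(\psi-\chi)\|_{\ssy 0,D}\le Ch^{r-1}\|\psi\|_{\ssy r+1,D}\le Ch^{r-1}\|g\|_{\ssy r-3,D}$, which for $r=3,4$ still requires $g$ to have $r-3$ derivatives. So for $r=4$ one genuinely obtains only $\|e_h\|_{\ssy 0,D}\le Ch^{5}\|f\|_{\ssy 1,D}$ after one more interpolation step pairing $h^{r-1}=h^3$ from $e_h$ (measured now in a different way) with $h^{4}$ from $\psi$ — the asymmetry $h^5=h^{3}\cdot h^{2}$ for $r=4$ versus $h^4=h^2\cdot h^2$ for $r=3$ versus $h^2$ for $r=2$ is exactly what the three cases in \eqref{ARA2} record, and it comes from how many derivatives of $f$ resp. $g$ the regularity estimate \eqref{ElBihar2} can absorb.

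Concretely, the three cases are handled as follows. For $r=2$: take $g\in {\bfdot H}^1(D)$ (so that $\psi=T_{\ssy B}g\in H^5\cap\dots$ with $\|\psi\|_{\ssy 3,D}\le C\|g\|_{\ssy -1,D}$ via \eqref{ElBihar2}), get $\inf_\chi\|\Delta(\psi-\chi)\|_{\ssy 0,D}\le Ch\|\psi\|_{\ssy 3,D}\le Ch\|g\|_{\ssy -1,D}$; combined with $\|\Delta e_h\|_{\ssy 0,D}\le Ch\|f\|_{\ssy -1,D}$ from \eqref{C1_FEM} and taking the supremum over $g\in{\bfdot H}^1(D)$ this yields $\|e_h\|_{\ssy {\bfdot H}^{-1}}\le Ch^2\|f\|_{\ssy -1,D}$, hence the $L^2$ statement via \eqref{minus_equiv}/\eqref{H_equiv} — here I will need to check that $r=2$ asks only for $\|e_h\|_{\ssy 0,D}\le Ch^2\|f\|_{\ssy-1,D}$ in a dual norm, which matches \eqref{ARA2}. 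For $r=3$: $g\in L^2(D)$, $\psi=T_{\ssy B}g$ with $\|\psi\|_{\ssy 4,D}\le C\|g\|_{\ssy 0,D}$, so $\inf_\chi\|\Delta(\psi-\chi)\|_{\ssy 0,D}\le Ch^2\|g\|_{\ssy 0,D}$ (using \eqref{Sh_H2} with $r=3$ for $\psi\in H^4$, noting $h^{r-1}=h^2$), and $\|\Delta e_h\|_{\ssy 0,D}\le Ch^2\|f\|_{\ssy 0,D}$; the supremum over $g\in L^2$ gives $\|e_h\|_{\ssy 0,D}\le Ch^4\|f\|_{\ssy 0,D}$. For $r=4$: $g\in{\bfdot H}^1(D)$ does better — $\|\psi\|_{\ssy 5,D}\le C\|g\|_{\ssy 1,D}$ hence $\inf_\chi\|\Delta(\psi-\chi)\|_{\ssy 0,D}\le Ch^3\|g\|_{\ssy 1,D}$, paired with $\|\Delta e_h\|_{\ssy 0,D}\le Ch^{2}\|f\|_{\ssy 1,D}$ — wait, \eqref{C1_FEM} with $r=4$ gives $h^{r-1}=h^3$, i.e. $\|\Delta e_h\|_{\ssy 0,D}\le Ch^3\|f\|_{\ssy 1,D}$, so the product is $h^6\|f\|_{\ssy 1,D}\|g\|_{\ssy 1,D}$; but the claim is only $h^5$, so in fact for $r=4$ one pairs the sharper $\|\Delta e_h\|\le Ch^3\|f\|_{\ssy 1,D}$ with the crude $\inf_\chi\|\Delta(\psi-\chi)\|\le Ch^{2}\|\psi\|_{\ssy 3,D}\le Ch^2\|g\|_{\ssy -1,D}$, giving $\|e_h\|_{\ssy{\bfdot H}^{-1}}\le Ch^5\|f\|_{\ssy 1,D}$ — this is where I must be careful to balance the powers so the result is exactly as stated rather than artificially strong. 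The main obstacle, then, is bookkeeping: choosing in each of the three cases the correct dual regularity index for $g$ so that the two factors $\|\Delta e_h\|_{\ssy 0,D}$ and $\inf_\chi\|\Delta(\psi-\chi)\|_{\ssy 0,D}$ combine to the precisely-stated exponent of $h$, and then converting the resulting negative-norm bound on $e_h$ back to the $L^2(D)$ norm using \eqref{minus_equiv} and \eqref{H_equiv} — but since $\|e_h\|_{\ssy 0,D}=\|e_h\|_{\ssy{\bfdot H}^0}$ is itself just one of these dual norms with index $0$, the conversion is immediate once the supremum is taken over the right space.
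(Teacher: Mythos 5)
Your duality framework is the right one, and for $r=2$ and $r=3$ it is essentially the paper's own argument: the paper simply takes the dual datum to be $e:=T_{\ssy B}f-T_{\ssy B,h}f$ itself (writing $\|e\|_{\ssy 0,D}^2=(\Delta e,\Delta(T_{\ssy B}e-\chi))_{\ssy 0,D}$), which makes the return to the $L^2$ norm automatic and avoids your detour through negative norms. That detour is where your write-up wobbles even in the easy cases: a bound of the form $(e,g)_{\ssy 0,D}\leq A\,\|g\|_{\ssy -1,D}$ gives the $L^2$ estimate only because $\|g\|_{\ssy -1,D}\leq C\,\|g\|_{\ssy 0,D}$, whereas the conclusion you actually state, $\|e\|_{\ssy{\bfdot H}^{-1}}\leq A$, is \emph{weaker} than $\|e\|_{\ssy 0,D}\leq A$ and does not imply it. For $r=2,3$ this is cosmetic and repairable.

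The genuine gap is the case $r=4$. Your proposed fix pairs $\|\Delta e\|_{\ssy 0,D}\leq Ch^3\|f\|_{\ssy 1,D}$ with a ``crude'' bound $\inf_{\chi\in M_h}\|\Delta(\psi-\chi)\|_{\ssy 0,D}\leq Ch^2\|\psi\|_{\ssy 3,D}$. No such estimate holds: two powers of $h$ in the $H^2(D)$ norm require $H^4(D)$ regularity of $\psi$, and in any case the only approximation property assumed in the paper is \eqref{Sh_H2}, which for $r=4$ yields $Ch^3\|\psi\|_{\ssy 5,D}$ and therefore forces the dual datum $g$ into ${\bfdot H}^1(D)$; the resulting pairing $(e,g)_{\ssy 0,D}\leq Ch^6\|f\|_{\ssy 1,D}\|g\|_{\ssy 1,D}$ only controls $\|e\|_{\ssy -1,D}$, not $\|e\|_{\ssy 0,D}$. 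The paper closes this case differently, using only \eqref{Sh_H2} as stated: a second duality with datum $\Delta e$ gives $\|\nabla e\|_{\ssy 0,D}^2\leq\|\Delta e\|_{\ssy 0,D}\,\|T_{\ssy B}\Delta e\|_{\ssy 2,D}\leq C\,\|\Delta e\|_{\ssy 0,D}\,\|e\|_{\ssy 0,D}$ (an interpolation inequality for $e$), while the primary duality is run with $\|T_{\ssy B}e\|_{\ssy 5,D}\leq C\,\|e\|_{\ssy 1,D}\leq C\,\|\nabla e\|_{\ssy 0,D}$, so that $\|e\|_{\ssy 0,D}^2\leq Ch^3\|\Delta e\|_{\ssy 0,D}^{3/2}\|e\|_{\ssy 0,D}^{1/2}$, hence $\|e\|_{\ssy 0,D}\leq Ch^2\|\Delta e\|_{\ssy 0,D}\leq Ch^5\|f\|_{\ssy 1,D}$. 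You would need either this extra interpolation step or an additional (unassumed) intermediate approximation hypothesis to complete your version of the $r=4$ case.
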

%
%
%
%
\begin{proof}
Let $f\in H^{\max\{0,r-3\}}(D)$ and $e=T_{\ssy B}f-T_{\ssy B,h}f$.
Also, we define a bilinear form $\gamma:H^2(D)\times
H^2(D)\to{\mathbb R}$ by $\gamma(v_1,v_2):=(\Delta v_1,\Delta
v_2)_{\ssy 0,D}$ for $v_1$, $v_2\in H^2(D)$.
Now, let $w_{\ssy A}$, $w_{\ssy B}\in{\bfdot H}^4(D)$ be defined
by $T_{\ssy B}\Delta e=w_{\ssy A}$ and $T_{\ssy B}e=w_{\ssy B}$.
Then, using Galerkin orthogonality, we have:
\begin{equation}\label{Basiko_Est_1}
\begin{split}
\|\nabla e\|_{\ssy 0,D}^2=&\,-\gamma(w_{\ssy A},e)_{\ssy 0,D}\\
\leq&\,\|\Delta{e}\|_{\ssy 0,D}\,\inf_{\chi\in
M_h}\|w_{\ssy A}-\chi\|_{\ssy 2,D}\\
\end{split}
\end{equation}
and
\begin{equation}\label{Basiko_Est_2}
\begin{split}
\|e\|_{\ssy 0,D}^2=&\,-\gamma(w_{\ssy A},e)_{\ssy 0,D}\\
\leq&\,\|\Delta{e}\|_{\ssy 0,D}\,\inf_{\chi\in
M_h}\|w_{\ssy B}-\chi\|_{\ssy 2,D}.\\
\end{split}
\end{equation}
\par
{\sl Case 1}: Let $r\in\{2,3\}$. Then, using \eqref{Basiko_Est_2},
\eqref{C1_FEM}, \eqref{Sh_H2} and \eqref{PanwFragma1}, we obtain
\begin{equation*}
\begin{split}
\|e\|_{\ssy 0,D}^2\leq&\,C\,h^{r-1}\,\|f\|_{\ssy r-3,D}
\,h^{r-1}\,\|w_{\ssy B}\|_{\ssy r+1,D}\\
\leq&\,C\,h^{2(r-1)}\,\|f\|_{\ssy r-3,D}\,\|e\|_{\ssy r-3,D}\\
\end{split}
\end{equation*}
which, obviously, yields \eqref{ARA2}.
\par
{\sl Case 2}: Let $r=4$. Then, combining, \eqref{Basiko_Est_2},
\eqref{Sh_H2}, \eqref{ElBihar2} and \eqref{Poincare}, we get
\begin{equation}\label{soublaki1}
\begin{split}
\|e\|_{\ssy 0,D}^2\leq&\,C\,\|\Delta{e}\|_{\ssy 0,D}
\,h^3\,\|T_{\ssy B}e\|_{\ssy 5,D}\\
\leq&\,C\,\|\Delta{e}\|_{\ssy 0,D}
\,h^3\,\|\nabla e\|_{\ssy 0,D}.\\
\end{split}
\end{equation}
Also, we observe that \eqref{Basiko_Est_1} and \eqref{ElBihar2}
yield
\begin{equation}\label{soublaki2}
\begin{split}
\|\nabla e\|_{\ssy 0,D}\leq&\,\|\Delta{e}\|_{\ssy
0,D}^{\frac{1}{2}}\,\|T_{\ssy B}\Delta{e}\|_{\ssy
2,D}^{\frac{1}{2}}\\
\leq&\,C\,\|\Delta{e}\|_{\ssy 0,D}^{\frac{1}{2}}
\,\|e\|_{\ssy 0,D}^{\frac{1}{2}}.\\
\end{split}
\end{equation}
Now, we combine \eqref{soublaki1}, \eqref{soublaki2} and
\eqref{C1_FEM} to have
\begin{equation*}
\begin{split}
\|e\|_{\ssy 0,D}^{\frac{3}{2}}\leq&\,C\,h^{3}\,\|\Delta{e}\|_{\ssy
0,D}^{\frac{3}{2}}\\
\leq&\,C\,h^{\frac{15}{2}}\,\|f\|_{\ssy 1,D}^{\frac{3}{2}},\\
\end{split}
\end{equation*}
which obviously leads to \eqref{ARA2} for $r=4$.
\end{proof}
%
%
%
%
%
\section{An Estimate for the Modeling Error}\label{SECTION2}
Here, we derive an $L^{\infty}_t(L^2_{\ssy P}(L^2_x))$ bound for
the modeling error $u-{\widehat u}$, in terms of $\Delta{t}$ and
$\Delta{x}$.
%
%
%
%
\begin{theorem}\label{BIG_Qewrhma1}
Let $u$ and ${\widehat u}$ be defined, respectively, by
\eqref{Lin_MildSol} and \eqref{HatUform}. Then, there exists a
real constant $C>0$, independent of $T$, $\Delta{t}$ and
$\Delta{x}$, such that
\begin{equation}\label{ModelError}
\max_{[0,T]}\left\{{\mathbb E}\left[\|u-{\widehat u}\|_{\ssy
0,D}^2\right] \right\}^{\half}
\leq\,C\,\left[\,(p_d(\Delta{t}^{\frac{1}{4}}))^{\frac{1}{2}}
\,\,\,\Delta{t}^\frac{4-d}{8} +\epsilon^{-\frac{1}{2}}\,\,\,
\Delta{x}^{\frac{4-d}{2}-\epsilon}\,\,\,\right]
\quad\forall\,\epsilon\in(0,\tfrac{4-d}{2}].
\end{equation}
\end{theorem}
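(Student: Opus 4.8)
The plan is to split the modeling error into two contributions corresponding to the two sources of regularization in $\widehat W$: a \emph{time-averaging} part and a \emph{space-averaging} part. Using the integral representations \eqref{Lin_MildSol} and \eqref{HatUform} together with Lemma~\ref{Lhmma1}, I would write, for fixed $t\in[0,T]$,
\[
u(t,\cdot)-\widehat u(t,\cdot)
=\int_0^t\!\!\int_{\ssy D}\bigl[G(t-s;\cdot,y)-\widehat\Pi\bigl(\mathcal X_{(0,t)}\,G(t-\cdot;\cdot,\cdot);s,y\bigr)\bigr]\,dW(s,y),
\]
so that by the It\^o isometry \eqref{Ito_Isom} and the Hilbert--Schmidt characterization \eqref{HSxar}, $\mathbb E[\|u(t)-\widehat u(t)\|_{0,D}^2]$ is an $L^2((0,t)\times D\times D)$-norm of the kernel difference $G-\widehat\Pi G$. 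I would then insert and subtract an intermediate kernel — namely $G$ averaged in the time variable only (or only in space), according to whichever ordering makes the triangle-inequality estimates cleanest — to obtain two terms: $E_{\rm time}$ involving $\|G-\Pi_t G\|$ and $E_{\rm space}$ involving $\|\Pi_t G-\widehat\Pi G\|$, the latter being the spatial cell-average fluctuation.

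For the time part, after expanding $G$ in the eigenbasis via \eqref{GreenKernel} and using orthonormality of the $\varepsilon_\alpha$, the relevant quantity reduces to $\sum_{\alpha}\int_0^t\bigl|e^{-\lambda_\alpha^2(t-s)}-\text{(its }T_n\text{-average)}\bigr|^2\,ds$. A standard Backward-Euler-type bound on $|e^{-\lambda_\alpha^2\sigma}-$(local average)$|$, integrated over each subinterval $T_n$, telescopes into something comparable to $\sum_\alpha \tfrac{1-e^{-\lambda_\alpha^2\,c\,\Delta t}}{\lambda_\alpha^2}$ (up to a harmless extra $\Delta t$ factor), at which point Lemma~\ref{Series_Asym_2} with $\delta\asymp\Delta t$ delivers exactly the bound $C\,p_d(\Delta t^{1/4})\,\Delta t^{(4-d)/4}$, and taking square roots gives the first term of \eqref{ModelError}. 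This is essentially the 1D argument of \cite{KZ2008} carried over verbatim, with the only genuinely new input being the $d$-dimensional series estimate of Lemma~\ref{Series_Asym_2}.

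For the space part, the difference $\Pi_t G-\widehat\Pi G$ measures how much $G(t-s;x,y)$, as a function of $y$, oscillates on a spatial cell $D_\mu$ of side $\Delta x$. The natural route is to bound the cell-fluctuation by a gradient: $\|G(t-s;x,\cdot)-\text{cell average}\|_{0,D_\mu}\le C\,\Delta x\,\|\nabla_y G(t-s;x,\cdot)\|_{0,D_\mu}$ (Poincar\'e on the cell), so that $E_{\rm space}^2\le C\,\Delta x^2\int_0^t\|\nabla_y G(t-s;\cdot,\cdot)\|_{L^2(D\times D)}^2\,ds$. Expanding in eigenfunctions turns $\|\nabla_y G\|^2$ into $\sum_\alpha \lambda_\alpha\, e^{-2\lambda_\alpha^2(t-s)}$, whose time integral is $\sum_\alpha \tfrac{1-e^{-2\lambda_\alpha^2 t}}{\lambda_\alpha}\lesssim\sum_\alpha \lambda_\alpha^{-1}\asymp\sum_\alpha |\alpha|^{-2}$, which \emph{diverges} for $d\ge2$. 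This divergence is the main obstacle, and it is exactly why the exponent in \eqref{ModelError} is $\tfrac{4-d}{2}-\epsilon$ rather than $1$: one must trade derivatives for the loss $\Delta x^{-\epsilon}$. Concretely, instead of pulling out a full $\Delta x$ per cell I would use the interpolation/fractional-Poincar\'e estimate $\|v-\text{cell avg}\|_{0,D_\mu}\le C\,\Delta x^{\sigma}\,\|v\|_{H^\sigma(D_\mu)}$ for a suitable fractional order $\sigma=\tfrac{4-d}{2}-\epsilon<1$ (or, equivalently, interpolate between the $L^2$ bound and the $H^1$ bound), reducing $E_{\rm space}^2$ to $C\,\Delta x^{2\sigma}\sum_\alpha\int_0^t \lambda_\alpha^{\sigma}e^{-2\lambda_\alpha^2(t-s)}\,ds\le C\,\Delta x^{2\sigma}\sum_\alpha \lambda_\alpha^{\sigma-1}\asymp \Delta x^{2\sigma}\sum_\alpha|\alpha|^{2\sigma-2}$; since $2\sigma-2=-(d+2\epsilon)<-d$, Lemma~\ref{Series_Asym_1} (with $c_\star\epsilon$ matched appropriately, after absorbing $\pi$-powers) bounds this sum by $C\,\epsilon^{-1}$, yielding $E_{\rm space}\le C\,\epsilon^{-1/2}\Delta x^{(4-d)/2-\epsilon}$. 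Adding $E_{\rm time}$ and $E_{\rm space}$ and taking the maximum over $t\in[0,T]$ — noting every bound above is uniform in $t$ and in $T$ because the exponential time integrals are all controlled by $\int_0^\infty$ — gives \eqref{ModelError}. The care needed in the fractional-Poincar\'e step, and in checking that all constants are $T$-independent, is where the real work lies; everything else is bookkeeping with the eigenexpansion and the two series lemmas.
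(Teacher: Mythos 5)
Your proposal follows essentially the same route as the paper: the same It{\^o}-isometry reduction to an $L^2$ norm of the kernel difference, the same triangle-inequality splitting into a spatial-fluctuation term and a temporal-fluctuation term, the same reduction of the time part to $\sum_{\alpha}\tfrac{1-e^{-c\lambda_{\alpha}^2\Delta t}}{\lambda_{\alpha}^2}$ handled by Lemma~\ref{Series_Asym_2}, and the same ``interpolate to trade a full power of $\Delta x$ for an $\epsilon$-loss'' idea resolved by Lemma~\ref{Series_Asym_1}. The one genuine difference is the device for the spatial term: you invoke a fractional Poincar{\'e} inequality $\|v-{\text{\rm cell avg}}\|_{\ssy 0,D_{\mu}}\leq C\,\Delta x^{\sigma}\|v\|_{\ssy H^{\sigma}(D_{\mu})}$, which works but obliges you to justify the $\Delta x$-independence of the cell constant by scaling, the superadditivity of the Gagliardo seminorm over disjoint cells, and the equivalence of the localized $H^{\sigma}$ norm with the spectral $\bfdot{H}^{\sigma}$ norm (nontrivial for $d=2$, where $\sigma=1-\epsilon\geq\tfrac12$); the paper sidesteps all of this by bounding the pointwise oscillation of the explicit sine eigenfunctions, $\sup_{y,y'\in D_{\mu}}|\varepsilon_{\alpha}(y)-\varepsilon_{\alpha}(y')|\leq C\min\{1,\Delta x\,|\alpha|_{\ssy\nset^d}\}\leq C(\Delta x\,|\alpha|_{\ssy\nset^d})^{\gamma}$, which is the same interpolation in completely elementary form. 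Two small slips to fix: your displayed chain should read $\int_0^t\lambda_{\alpha}^{\sigma}e^{-2\lambda_{\alpha}^2(t-s)}\,ds\leq\tfrac12\lambda_{\alpha}^{\sigma-2}\asymp|\alpha|_{\ssy\nset^d}^{2\sigma-4}$ (not $\lambda_{\alpha}^{\sigma-1}$ and $|\alpha|_{\ssy\nset^d}^{2\sigma-2}$) --- your own subsequent identification of the exponent as $-(d+2\epsilon)$ is the one consistent with $2\sigma-4$ --- and the time part needs a separate treatment of the last partial subinterval where $\mathcal{X}_{(0,t)}$ cuts $T_{{\widehat N}(t)}$, which is where most of the paper's computation for $\Theta_{\ssy B}$ actually lives.
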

%
%
%
%
%
%
\begin{proof}
Using \eqref{Lin_MildSol} and \eqref{HatUform}, we conclude that
\begin{equation}\label{corv0}
u(t,x)-{\widehat u}(t,x)=\int_0^{\ssy T}\!\!\!\int_{\ssy D}
\big[{\mathcal X}_{(0,t)}(s)\,G(t-s;x,y) -{\widetilde
G}(t,x;s,y)\big]\,dW(s,y) \quad\forall\,(t,x)\in[0,T]\times
{\overline D},
\end{equation}
where $\widetilde{G}:
(0,T)\times{D}\rightarrow L^2((0,T)\times D)$ given by
\begin{equation*}
{\widetilde G}(t,x;\cdot)\Big|_{\ssy S_{n,\mu}} \equiv
\tfrac{1}{\Delta{t}\,(\Delta{x})^d} \int_{\ssy S_{n,\mu}}
{\mathcal X}_{(0,t)}(s')\,G(t-s';x,y')\,\,ds'dy', \quad
\forall\,n\in{\mathcal N}_{\star}, \ken\forall\,\mu\in{\mathcal
J}_{\star}^d.
\end{equation*}
\par
Let $\Theta:=\left\{{\mathbb E}\left[ \|u_{\ssy L}-{\widehat
u}_{\ssy L}\|_{\ssy 0,D}^2\right]\right\}^{\half}$ and
$t\in(0,T]$. Using \eqref{corv0} and It{\^o} isometry
\eqref{Ito_Isom}, we obtain
\begin{equation*}
\begin{split}
\Theta(t)&= \tfrac{1}{\Delta{t}\,(\Delta{x})^d}\Bigg\{
\sum_{n\in{\mathcal N}_{\star}}\sum_{\mu\in{\mathcal
J}_{\star}^d}\int_{\ssy D}\Bigg\{\int_{\ssy S_{n,\mu}}
\Bigg[\int_{\ssy S_{n,\mu}}
\Big[{\mathcal X}_{(0,t)}(s)\,G(t-s;x,y)\\
&\hskip6.5truecm -{\mathcal
X}_{(0,t)}(s')\,G(t-s';x,y')\Big]\,ds'dy'
\Bigg]^2\,dsdy\Bigg\}\,dx\Bigg\}^{\frac{1}{2}}.\\
\end{split}
\end{equation*}
Now, we introduce the splitting
\begin{equation}\label{corv1}
\Theta(t) \leq\,\Theta_{\ssy A}(t)
+\Theta_{\ssy B}(t),
\end{equation}
where
\begin{equation*}
\begin{split}
\Theta_{\ssy A}(t):= \tfrac{1}{\Delta{t}\,(\Delta{x})^d}\Bigg\{
\sum_{n\in{\mathcal N}_{\star}} \sum_{\mu\in{\mathcal
J}_{\star}^d}\int_{\ssy D} \Bigg\{\int_{\ssy S_{n,\mu}}\Big[
\int_{\ssy S_{n,\mu}}&{\mathcal X}_{(0,t)}(s)
\Big[\,G(t-s;x,y)\\
&-G(t-s;x,y')\Big]\,ds'dy'
\Big]^2\,dsdy\Bigg\}\,dx\Bigg\}^{\frac{1}{2}}\\
\end{split}
\end{equation*}
and
\begin{equation*}
\begin{split}
\Theta_{\ssy B}(t)= \tfrac{1}{\Delta{t}\,(\Delta{x})^d}\Bigg\{
\sum_{n\in{\mathcal N}_{\star}}\sum_{\mu\in{\mathcal
J}_{\star}^d}\int_{\ssy D}\Bigg\{\int_{\ssy S_{n,\mu}}
\Big[\int_{\ssy S_{n,\mu}}& \Big[{\mathcal
X}_{(0,t)}(s)\,G(t-s;x,y') \\
&-{\mathcal X}_{(0,t)}(s')\,G(t-s';x,y')\Big]\,ds'dy'
\Big]^2\,dsdy\Bigg\}\,dx\Bigg\}^{\frac{1}{2}}.\\
\end{split}
\end{equation*}
\par\smallskip
\par
{\tt Estimation of $\Theta_{\ssy A}(t)$}: Using
\eqref{GreenKernel} and the $(\cdot,\cdot)_{\ssy
0,D}-$orthogonality of
$(\varepsilon_{\alpha})_{\alpha\in\nset^d}$, we have
\begin{equation*}
\begin{split}
\Theta^2_{\ssy A}(t) &=\,\tfrac{1}{(\Delta{x})^{2d}}
\sum_{n\in{\mathcal N}_{\star}}\sum_{\mu\in{\mathcal
J}_{\star}^d}\int_{\ssy D} \Bigg\{\int_{S_{n,\mu}}
\Bigg[\int_{\ssy D_{\mu}}{\mathcal X}_{(0,t)}(s)
\,\Big[G(t-s;x,y)-G(t-s;x,y')\Big]\,dy'\Bigg]^2\,dsdy
\Bigg\}dx\\
&=\,\tfrac{1}{(\Delta{x})^{2d}} \sum_{n\in{\mathcal
N}_{\star}}\sum_{\mu\in{\mathcal
J}_{\star}^d}\Bigg\{\int_{S_{n,\mu}} \Bigg[\sum_{\alpha\in\nset^d}
{\mathcal X}_{(0,t)}(s)\,e^{-2\lambda_{\alpha}^2(t-s)}
\,\Big(\int_{\ssy
D_{\mu}}(\varepsilon_{\alpha}(y)-\varepsilon_{\alpha}(y'))\,dy'
\Big)^2 \Bigg]\,dsdy\Bigg\}\\
&=\,\tfrac{1}{(\Delta{x})^{2d}}\sum_{\alpha\in\nset^d}\Bigg\{
\sum_{n\in{\mathcal N}_{\star}}\int_{\ssy T_n} {\mathcal
X}_{(0,t)}(s)\,e^{-2\lambda_{\alpha}^2(t-s)}\,ds
\Bigg\}\Bigg\{\sum_{\mu\in{\mathcal J}_{\star}^d}\int_{\ssy
D_{\mu}} \Big(\int_{\ssy
D_{\mu}}(\varepsilon_{\alpha}(y)-\varepsilon_{\alpha}(y'))\,dy'\Big)^2
\,dy\Bigg\},\\
\end{split}
\end{equation*}
from which, using the Cauchy-Schwarz inequality, follows that
\begin{equation}\label{corv2}
\Theta^2_{\ssy A}(t)\leq \sum_{\alpha\in\nset^d} \Big(\int_0^t
e^{-2\lambda_{\alpha}^2(t-s)}\,ds\Big)\,
\Bigg[\tfrac{1}{(\Delta{x})^d}\sum_{\mu\in{\mathcal J}_{\star}^d}
\int_{\ssy D_{\mu}\times D_{\mu}}
\,\big|\varepsilon_{\alpha}(y)-\varepsilon_{\alpha}(y')\big|^2\,dy'dy
\Bigg].
\end{equation}
Observing that $\int_0^te^{-2\lambda_{\alpha}^2(t-s)}\,ds
\leq\,\frac{1}{2}\,\lambda_{\alpha}^{-2}$ for $\alpha\in\nset^d$,
and that
\begin{equation*}
\begin{split}
\sup_{y,y'\in
D_{\mu}}\big|\varepsilon_{\alpha}(y)-\varepsilon_{\alpha}(y')\big|
\leq&\,2^{\frac{d}{2}+1}\,\min\left\{1,\tfrac{\pi}{2}\,d^{\frac{1}{2}}
\,\Delta{x}\,|\alpha|_{\ssy\nset^d}\right\}\\
\leq&\,2^{\frac{d}{2}+1-\gamma}\,\pi^{\gamma}
\,d^{\frac{\gamma}{2}} \,\Delta{x}^{\gamma}
\,|\alpha|_{\ssy\nset^d}^{\gamma}, \quad\forall\,\gamma\in[0,1], \
\ \forall\,\alpha\in\nset^d,
\ \ \forall\,\mu\in{\mathcal J}^d_{\star},\\
\end{split}
\end{equation*}
\eqref{corv2} yields
\begin{equation}\label{marine_poros1}
\Theta_{\ssy
A}^2(t)\leq\,2^{d+1-2\gamma}\,d^{\gamma}\,\pi^{2\gamma-4}
\,(\Delta{x})^{2\gamma}
\,\sum_{\alpha\in\nset^d}\tfrac{1}{|\alpha|_{\ssy\nset^d}^{2(2-\gamma)}}.
\end{equation}
The series in \eqref{marine_poros1} converges when $2(2-\gamma)>d$
or equivalently $\gamma<\frac{4-d}{2}$. Thus, combining
\eqref{marine_poros1} and \eqref{ASYM_1}, we, finally, conclude
that
\begin{equation}\label{corv5}
\Theta_{\ssy A}(t)\leq\,C\,\epsilon^{-\frac{1}{2}}
\,\Delta{x}^{\frac{4-d}{2}-\epsilon}
\quad\forall\,\epsilon\in\left(0,\tfrac{4-d}{2}\right].
\end{equation}
\par\medskip
\par\noindent
{\tt Estimation of $\Theta_{\ssy B}(t)$}:
For $t\in (0,T]$, let
${\widehat N}(t):= \min\big\{\,\ell\in\nset:\ken 1\leq \ell\leq
N_{\star} \ken\text{\rm and}\ken t\leq t_{\ell}\,\big\}$
and
\begin{equation*}
{\widehat T}_n(t):=T_n\cap (0,t)=\left\{
\begin{aligned}
&T_n,\quad\quad\quad\quad\text{\rm if}\ken n<{\widehat N}(t)\\
&(t_{\ssy {\widehat N}(t)-1},t),\hskip0.31truecm
\text{\rm if}\ken n={\widehat N}(t)\\
\end{aligned}
\right.,\quad n=1,\dots,{\widehat N}(t).
\end{equation*}
Now, we use \eqref{GreenKernel} and the $(\cdot,\cdot)_{\ssy
0,D}-$orthogonality of $(\varepsilon_{\alpha})_{\alpha\in\nset^d}$
as follows
\begin{equation*}
\begin{split}
\Theta^2_{\ssy B}(t)&=
\tfrac{(\Delta{x})^d}{(\Delta{t}\,(\Delta{x})^d)^2}
\sum_{n\in{\mathcal N}_{\star}} \sum_{\mu\in{\mathcal
J}_{\star}^d} \int_{\ssy D}\Bigg\{\int_{\ssy T_n} \Bigg[\int_{\ssy
S_{n,\mu}}\Big[{\mathcal X}_{(0,t)}(s)\,G(t-s;x,y') -{\mathcal
X}_{(0,t)}(s')\,G(t-s';x,y')\Big]\,ds'dy'
\Bigg]^2\,ds\Bigg\}\,dx\\
&\hskip-0.5truecm=
\tfrac{(\Delta{x})^d}{(\Delta{t}\,(\Delta{x})^d)^2}
\sum_{\alpha\in\nset^d}\left[\,\sum_{\mu\in{\mathcal J}_{\star}^d}
\Big(\int_{\ssy
D_{\mu}}\varepsilon_{\alpha}(y')\,dy'\Big)^2\,\right]
\left[\,\sum_{n=1}^{{\widehat N}(t)}\int_{\ssy T_n}
\Big(\,\int_{\ssy T_n}\Big(\,{\mathcal
X}_{(0,t)}(s)\,e^{-\lambda_{\alpha}^2(t-s)} -{\mathcal
X}_{(0,t)}(s')\,e^{-\lambda_{\alpha}^2(t-s')}\,\Big)\,ds'\,\Big)^2
\,ds\,\right]\\
\end{split}
\end{equation*}
which yields that
\begin{equation}\label{corv6}
\Theta^2_{\ssy B}(t)\leq\,2^d\,
\sum_{\alpha\in\nset^d}\left(\,\tfrac{1}{(\Delta{t})^2}\,
\sum_{n=1}^{{\widehat N}(t)}\Psi_n^{\alpha}(t)\,\right),
\end{equation}
where
\begin{equation*}
\Psi_n^{\alpha}(t):=\int_{\ssy T_n} \Big(\,\int_{\ssy T_n}\left(
{\mathcal X}_{(0,t)}(s)\,e^{-\lambda_{\alpha}^2(t-s)} -{\mathcal
X}_{(0,t)}(s')\,e^{-\lambda_{\alpha}^2(t-s')}
\right)\,ds'\,\Big)^2\,ds.
\end{equation*}
\par
Let $\alpha\in\nset^d$ and $n\in\{1,\dots,{\widehat N}(t)-1\}$.
Then, we have
\begin{equation*}
\begin{split}
\Psi_n^{\alpha}(t) &=\,\int_{\ssy T_n}\Big(\,\int_{\ssy T_n}
\!\!\int_s^{s'} \lambda_{\alpha}^2\,
e^{-\lambda_k^2(t-\tau)}\,d\tau ds'\,\Big)^2\,ds\\
&\leq\,\int_{\ssy T_n} \Big(\,\int_{\ssy
T_n}\!\!\int_{t_{n-1}}^{\max\{s',s\}}
\lambda_{\alpha}^2\,e^{-\lambda_{\alpha}^2(t-\tau)}
\,d\tau ds'\,\Big)^2\,ds\\
&\leq\,2\int_{\ssy T_n}\Big(\, \int_{\ssy
T_n}\!\!\int_{t_{n-1}}^{s'}
\lambda_{\alpha}^2\,e^{-\lambda_{\alpha}^2(t-\tau)}\,d\tau
ds'\,\Big)^2\,ds +2\int_{\ssy T_n}\Big(\, \int_{\ssy
T_n}\!\!\int_{t_{n-1}}^{s}\lambda_{\alpha}^2\,
e^{-\lambda_{\alpha}^2(t-\tau)}\,d\tau\,ds'\,\Big)^2\,ds\\
&\leq\,2\,\Delta{t}\,\Big(\, \int_{\ssy
T_n}\!\!\int_{t_{n-1}}^{s'} \lambda_{\alpha}^2\,
e^{-\lambda_{\alpha}^2(t-\tau)}\,d\tau ds'\Big)^2
+2\,(\Delta{t})^2\,\int_{\ssy T_n}\Big(\int_{t_{n-1}}^{s}
\lambda_{\alpha}^2\,
e^{-\lambda_{\alpha}^2(t-\tau)}\,d\tau\Big)^2\,ds,\\
\end{split}
\end{equation*}
from which, using the Cauchy-Schwarz inequality and
integrating by parts, we obtain
\begin{equation*}
\begin{split}
\Psi_n^{\alpha}(t)&\leq\,4\,(\Delta{t})^2\,\int_{\ssy T_n}
\Big(\,e^{-\lambda_{\alpha}^2(t-s)}
-e^{-\lambda_{\alpha}^2(t-t_{n-1})}\,\Big)^2\,ds\\
&\leq\,4\,(\Delta{t})^2\,\big(1-e^{-\lambda_{\alpha}^2\Delta{t}}\big)^2
\int_{\ssy T_n}e^{-2\lambda_{\alpha}^2(t-s)}\,ds\\
&\leq\,2\,(\Delta{t})^2\,\big(1-
e^{-\lambda_{\alpha}^2\Delta{t}}\big)^2
\,\tfrac{e^{-\lambda_{\alpha}^2(t-t_n)}
-e^{-\lambda_{\alpha}^2(t-t_{n-1})}}{\lambda_{\alpha}^2}\cdot\\
\end{split}
\end{equation*}
Thus, by summing with respect to $n$, we obtain
\begin{equation}\label{corv7}
\tfrac{1}{(\Delta{t})^2}\,\sum_{n=1}^{{\widehat
N}(t)-1}\Psi_n^{\alpha}(t)\leq\,2\,
\tfrac{(1-e^{-\lambda_{\alpha}^2\Delta{t}})^2}{\lambda_{\alpha}^2}\cdot
\end{equation}
Considering, now, the case $n={\widehat N}(t)$, we have
\begin{equation}\label{gaga}
\Psi_{\ssy {\widehat N}(t)}^{\alpha}(t)=\Psi^{\alpha}_{\ssy
A}(t)+\Psi_{\ssy B}^{\alpha}(t)
\end{equation}
with
\begin{equation*}
\begin{split}
\Psi_{\ssy A}^{\alpha}(t)&:=\int_{t_{{\widehat N}(t)-1}}^t
\left(\,\int_{t_{{\widehat N}(t)-1}}^t
\int_{s'}^s\lambda_{\alpha}^2e^{-\lambda_{\alpha}^2(t-\tau)}\,d\tau{ds'}
+\int_t^{t_{{\widehat N}(t)}} e^{-\lambda_{\alpha}^2(t-s)}\,ds'
\,\right)^2\,ds\\
\Psi_{\ssy B}^{\alpha}(t)&:=\int_t^{t_{{\widehat N}(t)}}\left(\,
\int_{t_{{\widehat N}(t)}-1}^t
e^{-\lambda_{\alpha}^2(t-s')}\,ds'\,\right)^2\,ds.\\
\end{split}
\end{equation*}
Then, we have
\begin{equation*}
\begin{split}
\Psi_{\ssy
B}^{\alpha}(t)&\leq\,\tfrac{\Delta{t}}{\lambda_{\alpha}^4}\,
\Big[\,1-e^{-\lambda_{\alpha}^2\,
\big(\,t-t_{{\widehat N}(t)-1}\,\big)}\,\Big]^2\\
&\leq\,\tfrac{\Delta{t}}{\lambda_{\alpha}^4}\,
\big(\,1-e^{-\lambda_{\alpha}^2\,\Delta{t}}\,)^2\\
\end{split}
\end{equation*}
and
\begin{equation*}
\begin{split}
\Psi_{\ssy A}^{\alpha}(t)&\leq\,\int_{t_{{\widehat N}(t)-1}}^t
\left[ \int_{t_{{\widehat N}(t)-1}}^t
\int_{s'}^s\lambda_{\alpha}^2e^{-\lambda_{\alpha}^2(t-\tau)}\,d\tau{ds'}
+\Delta{t}\,\,\,e^{-\lambda_{\alpha}^2(t-s)}
\right]^2\,ds \\
&\leq\,2\,\int_{t_{{\widehat N}(t)-1}}^t \left[ \int_{t_{{\widehat
N}(t)-1}}^t\int_{s'}^s
\lambda_{\alpha}^2e^{-\lambda_{\alpha}^2(t-\tau)}\,d\tau{ds'}
\right]^2\,ds +\tfrac{(\Delta{t})^2}{\lambda_{\alpha}^2}\,
\left[\,1-e^{-2\lambda_{\alpha}^2\left(\,t-t_{{\widehat N}(t)-1}\,\right)}\,\right]\\
&\leq\, 2\,\int_{t_{{\widehat N}(t)-1}}^t \left[
\int_{t_{{\widehat N}(t)-1}}^t\int_{t_{{\widehat
N}(t)-1}}^{\max\{s,s'\}}
\lambda_{\alpha}^2e^{-\lambda_{\alpha}^2(t-\tau)}\,d\tau{ds'}
\right]^2\,ds
+\tfrac{(\Delta{t})^2}{\lambda_{\alpha}^2}
\,\big(\,1-e^{-2\lambda_{\alpha}^2\,\Delta{t}}\,\big)\\
&\leq \,8\,(\Delta{t})^2 \int_{t_{{\widehat N}(t)-1}}^t
\left[\,\int_{t_{{\widehat N}(t)-1}}^s\lambda_{\alpha}^2
e^{-\lambda_{\alpha}^2(t-\tau)}\,d\tau\,\right]^2\,ds
+\tfrac{(\Delta{t})^2}{\lambda_{\alpha}^2}
\,\big(\,1-e^{-2\lambda_{\alpha}^2\,\Delta{t}}\,\big)\\
&\leq \,8\,(\Delta{t})^2\, \int_{t_{{\widehat N}(t)-1}}^t
\left[\,e^{-\lambda_{\alpha}^2(t-s)}
-e^{-\lambda_{\alpha}^2(t-t_{{\widehat N}(t)-1})}\,\right]^2\,ds
+\tfrac{(\Delta{t})^2}{\lambda_{\alpha}^2}
\,\big(\,1-e^{-2\lambda_{\alpha}^2\,\Delta{t}}\,\big),\\
\end{split}
\end{equation*}
which, along with \eqref{gaga}, gives
\begin{equation*}
\Psi_{\ssy {\widehat
N}(t)}^{\alpha}\leq\,5\tfrac{(\Delta{t})^2}{\lambda_{\alpha}^2}
\,\big(\,1-e^{-2\lambda_{\alpha}^2\,\Delta{t}}\,\big)
+\tfrac{\Delta{t}}{\lambda_{\alpha}^4}\,
\big(\,1-e^{-\lambda_{\alpha}^2\Delta{t}}\,\big)^2\,\cdot
\end{equation*}
Since the mean value theorem yields:
$1-e^{-\lambda_{\alpha}^2\Delta{t}}\leq\,\lambda_{\alpha}^2\,\Delta{t}$,
the above inequality takes the form
\begin{equation}\label{corv8}
\tfrac{1}{(\Delta{t})^2}\,\Psi_{\ssy {\widehat N}(t)}^{\alpha}\leq
\,6\,\tfrac{1-e^{-2\lambda_{\alpha}^2\,\Delta{t}}}{\lambda_{\alpha}^2}\,\cdot
\end{equation}
\par
Combining \eqref{corv6}, \eqref{corv7} and \eqref{corv8} we obtain
\begin{equation}\label{corv9}
\Theta_{\ssy B}^2(t)\leq \,8\,\sum_{\alpha\in\nset^d}
\tfrac{1-e^{-2\lambda_{\alpha}^2\,\Delta{t}}}{\lambda_{\alpha}^2}
\,\cdot
\end{equation}
Now, combine \eqref{corv9} and \eqref{ASYM_2} to arrive at
\begin{equation}\label{corv12}
\Theta_{\ssy B}(t)\leq
\,C\,(p_d(\Delta{t}^{\frac{1}{4}}))^{\frac{1}{2}}
\,\,\Delta{t}^{\frac{4-d}{8}}.
\end{equation}
\par
The error bound \eqref{ModelError} follows by observing that
$\Theta(0)=0$ and combining the bounds \eqref{corv1},
\eqref{corv5} and \eqref{corv12}.
\end{proof}
%
%
%
%
%
%
\section{Time-Discrete Approximations}\label{SECTION3}
\par
The Backward Euler time-discrete approximations to the solution
${\widehat u}(\tau_m,\cdot)$ of the problem \eqref{AC2} are
defined as follows:
first, set
\begin{equation}\label{BackE1}
{\widehat U}^0:=0,
\end{equation}
and then, for $m=1,\dots,M$, find ${\widehat U}^m\in {\bfdot
H}^4(D)$ such that
\begin{equation}\label{BackE2}
{\widehat U}^m-{\widehat U}^{m-1} +k_m\,\Delta^2{\widehat U}^m
=\int_{\ssy\Delta_m}{\widehat W}\,ds\quad\text{\rm a.s.}.
\end{equation}
%
%
\par
To develop an error estimate in a discrete in time
$L^{\infty}_t(L^2_{\ssy P}(L_x^2))$ norm for the above
time-discrete approximations, we need an error estimate for the
Backward Euler time-discrete approximations, $(W^m)_{m=0}^{\ssy
M}$, of the solution $w$ to the deterministic problem
\eqref{Det_Parab}, given below:
First, set
\begin{equation}\label{BEDet1}
W^0:=w_0.
\end{equation}
Then, for $m=1,\dots,M$, find $W^m\in{\bfdot H}^4(D)$ such that
\begin{equation}\label{BEDet2}
W^m-W^{m-1} +k_m\,\Delta^2W^m=0.
\end{equation}
%
%
%
\begin{proposition}\label{DetPropo1}
Let $(W^m)_{m=0}^{\ssy M}$ be the Backward Euler time-discrete
approximations of the solution $w$ of the problem
\eqref{Det_Parab} defined in \eqref{BEDet1}--\eqref{BEDet2}. If
$w_0\in{\bfdot H}^2(D)$, then, there exists a constant $C>0$,
independent of $T$, $\Delta{t}$, $\Delta{x}$, $M$ and
$(k_m)_{m=1}^{\ssy M}$, such that
\begin{equation}\label{Ydaspis900}
\Bigg(\,\sum_{m=1}^{\ssy M}k_m\, \|W^m-w(\tau_m,\cdot)\|_{\ssy
0,D}^2 \,\Bigg)^{\frac{1}{2}} \leq\,C\,(k_{\ssy\rm max})^{\theta}
\,\|w_0\|_{\ssy{\bfdot H}^{4\theta-2}}
\quad\forall\,\theta\in[0,1].
\end{equation}
\end{proposition}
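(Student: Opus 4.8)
The plan is to exploit the semigroup representation of both the exact solution $w(\tau_m,\cdot)={\mathcal S}(\tau_m)w_0$ and the Backward Euler iterates $W^m$, and to reduce the estimate to a bound on the difference of a rational function of $\Delta^2$ and the exponential $e^{-\tau\Delta^2}$, expanded in the orthonormal eigenbasis $(\varepsilon_\alpha)_{\alpha\in\nset^d}$. First I would write $W^m=E_m w_0$, where $E_m:=\prod_{j=1}^m (I+k_j\Delta^2)^{-1}$, which follows by induction from \eqref{BEDet1}--\eqref{BEDet2}; equivalently, $W^m=\sum_{\alpha\in\nset^d}\bigl(\prod_{j=1}^m(1+k_j\lambda_\alpha^2)^{-1}\bigr)(w_0,\varepsilon_\alpha)_{\ssy 0,D}\,\varepsilon_\alpha$, while $w(\tau_m,\cdot)=\sum_{\alpha\in\nset^d}e^{-\lambda_\alpha^2\tau_m}(w_0,\varepsilon_\alpha)_{\ssy 0,D}\,\varepsilon_\alpha$ by \eqref{GreenKernel}. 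Using Parseval this gives
\begin{equation*}
\|W^m-w(\tau_m,\cdot)\|_{\ssy 0,D}^2=\sum_{\alpha\in\nset^d}\Bigl|\,\prod_{j=1}^m(1+k_j\lambda_\alpha^2)^{-1}-e^{-\lambda_\alpha^2\tau_m}\,\Bigr|^2\,(w_0,\varepsilon_\alpha)_{\ssy 0,D}^2,
\end{equation*}
so the whole problem is reduced to a scalar one for each eigenvalue $\lambda:=\lambda_\alpha^2>0$.

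Next I would prove the two endpoint cases $\theta=0$ and $\theta=1$ and interpolate. For $\theta=0$ the claim is just $\sum_m k_m\|W^m-w(\tau_m,\cdot)\|_{\ssy 0,D}^2\le C\|w_0\|_{\ssy{\bfdot H}^{-2}}^2$, i.e. $\sum_{m}k_m\,|r_m(\lambda)-e^{-\lambda\tau_m}|^2\le C\lambda^{-1}$ uniformly in $\lambda>0$, where $r_m(\lambda)=\prod_{j=1}^m(1+k_j\lambda)^{-1}$; since both $0<r_m(\lambda)\le 1$ and $0<e^{-\lambda\tau_m}\le 1$, and $\sum_m k_m e^{-2\lambda\tau_m}\le\int_0^\infty e^{-2\lambda s}\,ds=\tfrac1{2\lambda}$ while $\sum_m k_m r_m(\lambda)^2\le\sum_m k_m r_m(\lambda)$ telescopes (using $r_m=r_{m-1}(1+k_m\lambda)^{-1}$, so $k_m\lambda r_m\le r_{m-1}-r_m$), the bound $C\lambda^{-1}$ follows. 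For $\theta=1$ I need $\sum_m k_m\,|r_m(\lambda)-e^{-\lambda\tau_m}|^2\le C\lambda^{-1}\cdot\lambda^2=C\lambda$; here one uses the standard first-order accuracy of Backward Euler, $|r_m(\lambda)-e^{-\lambda\tau_m}|\le C\,k_{\ssy\max}\,\lambda\,e^{-c\lambda\tau_{m-1}}$ type bounds — more carefully, one shows $|r_m(\lambda)-e^{-\lambda\tau_m}|\le C(k_{\ssy\max}\lambda)\min\{1,(\lambda\tau_m)^{-1}\}\,$ (or directly estimates the defect $r_m-e^{-\lambda\tau_m}$ by a discrete Duhamel/summation-by-parts argument against the per-step consistency error $e^{-\lambda k_j}-(1+k_j\lambda)^{-1}=O(k_j^2\lambda^2)$ damped by the remaining factors) and squares and sums to get the geometric-type series bounded by $C k_{\ssy\max}^2\lambda$. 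Then the general $\theta\in[0,1]$ case follows by the elementary interpolation inequality $|a_\alpha|^{2}\le (\text{bound}_0)^{1-\theta}(\text{bound}_1)^{\theta}$ applied termwise in $\alpha$ together with Hölder's inequality over the sum, or more transparently by noting $\min\{\lambda^{-1},\,\lambda\}\le\ldots$ — concretely, $|r_m(\lambda)-e^{-\lambda\tau_m}|^2\,k_m$ summed is $\le C\min\{\lambda^{-1},k_{\ssy\max}^2\lambda\}\le C k_{\ssy\max}^{2\theta}\lambda^{2\theta-1}$ for every $\theta\in[0,1]$, and multiplying by $(w_0,\varepsilon_\alpha)_{\ssy 0,D}^2$ and summing over $\alpha$ gives exactly $C k_{\ssy\max}^{2\theta}\|w_0\|_{\ssy{\bfdot H}^{4\theta-2}}^2$.

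The main obstacle is the uniform-in-$\lambda$ control of the scalar defect $r_m(\lambda)-e^{-\lambda\tau_m}$ for a \emph{nonuniform} time grid, in the regime where $k_{\ssy\max}\lambda$ is not small: the naive per-step error $O(k_j^2\lambda^2)$ can be enormous, and one must see that it is tamed both by the contraction factor $r_m(\lambda)\le e^{-c\lambda\tau_m}$ (valid since $(1+k\lambda)^{-1}\le e^{-k\lambda/(1+k_{\ssy\max}\lambda)}$ is not uniformly good, so one instead uses $r_m(\lambda)\le(1+k_{\ssy\max}\lambda)^{-m}$ together with $m\ge\tau_m/k_{\ssy\max}$, or simply the crude bounds $r_m,e^{-\lambda\tau_m}\le1$ when $\lambda$ is large) and by summation against $k_m$. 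I expect the cleanest route is to avoid a pointwise-in-$m$ accuracy statement altogether and argue directly: split $\sum_m k_m|r_m-e^{-\lambda\tau_m}|^2\le 2\sum_m k_m r_m^2+2\sum_m k_m e^{-2\lambda\tau_m}\le C\lambda^{-1}$ for the $\theta=0$ bound as above, and for the $\theta=1$ bound use the telescoping/discrete-Duhamel identity $r_m-e^{-\lambda\tau_m}=\sum_{j=1}^m r_{m}r_{j}^{-1}\bigl((1+k_j\lambda)^{-1}-e^{-k_j\lambda}\bigr)e^{-\lambda(\tau_m-\tau_j)}$ (writing the global error as a sum of propagated local errors), bound each local error by $Ck_j^2\lambda^2 e^{-k_j\lambda}$ — wait, $(1+k_j\lambda)^{-1}-e^{-k_j\lambda}$ is only $O(\min\{1,k_j^2\lambda^2\})$, and the factor $r_m r_j^{-1}=\prod_{i>j}(1+k_i\lambda)^{-1}$ provides the geometric decay — and conclude by Cauchy--Schwarz in $j$ that the sum is $\le C k_{\ssy\max}^2\lambda$. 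Everything after the scalar estimates is routine bookkeeping with Parseval and the definition of the $\|\cdot\|_{\ssy{\bfdot H}^s}$ norms.
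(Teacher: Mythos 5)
The paper does not actually print a proof of this proposition --- it defers to Proposition~5.1 of the 1D companion paper \cite{KZ2008} --- so the comparison is with the intended spectral argument rather than a written one. Your strategy is exactly that argument and it is sound: diagonalize in the eigenbasis $(\varepsilon_{\alpha})_{\alpha\in\nset^d}$, reduce to the scalar quantity $S(\lambda):=\sum_{m}k_m\,|r_m(\lambda)-e^{-\lambda\tau_m}|^2$ with $\lambda=\lambda_{\alpha}^2$ and $r_m(\lambda)=\prod_{j=1}^m(1+k_j\lambda)^{-1}$, prove $S(\lambda)\le C\min\{\lambda^{-1},\,k_{\max}^2\lambda\}$, and conclude via $\min\{a,b\}\le a^{1-\theta}b^{\theta}$, which gives precisely $C\,k_{\max}^{2\theta}\lambda_{\alpha}^{4\theta-2}$ termwise. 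Your $\theta=0$ endpoint is complete and correct (the telescoping $k_m\lambda r_m=r_{m-1}-r_m$ and the comparison of $\sum_m k_m e^{-2\lambda\tau_m}$ with $\int_0^{\infty}e^{-2\lambda s}\,ds$ are exactly right).

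One concrete error to repair in the $\theta=1$ endpoint: the discrete Duhamel identity as you wrote it, with the term $r_m r_j^{-1}\,(a_j-b_j)\,e^{-\lambda(\tau_m-\tau_j)}$ where $a_j=(1+k_j\lambda)^{-1}$ and $b_j=e^{-k_j\lambda}$, is not an identity --- you have multiplied the local error by \emph{both} tail propagators. The correct telescoping is $\prod_j a_j-\prod_j b_j=\sum_j\big(\prod_{i>j}a_i\big)(a_j-b_j)\big(\prod_{i<j}b_i\big)$, i.e.\ the last factor should be $e^{-\lambda\tau_{j-1}}$, not $e^{-\lambda(\tau_m-\tau_j)}$. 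With that fixed the endpoint does go through along your lines: it suffices to treat $k_{\max}\lambda\le 1$ (for $k_{\max}\lambda>1$ the $\theta=0$ bound already gives $\lambda^{-1}\le k_{\max}^2\lambda$); there $|a_j-b_j|\le C k_j^2\lambda^2$ and $\prod_{i>j}a_i\le e^{-\lambda(\tau_m-\tau_j)/2}$, whence $|r_m-e^{-\lambda\tau_m}|\le C k_{\max}\lambda^2\tau_m e^{-\lambda\tau_m/2}$ and $S(\lambda)\le C k_{\max}^2\lambda^4\sum_m k_m\tau_m^2e^{-\lambda\tau_m}\le C k_{\max}^2\lambda$. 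Note that the cruder pointwise bound $|r_m-e^{-\lambda\tau_m}|\le C k_{\max}\lambda e^{-c\lambda\tau_{m-1}}$ that you first float is \emph{not} what comes out (one only gets the extra factor $\lambda\tau_m$ against partial exponential decay), but, as you anticipated, the $L^2$-in-time summation absorbs it. With the identity corrected, the proof is complete.
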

%
%
%
%
%
%
%
%
\begin{proof}
The proof is omitted since it is moving along the lines of the
proof of the one dimensional case which is exposed in
Proposition~5.1 of \cite{KZ2008}.
\end{proof}
%
%
%
%
\par
Next theorem proves a discrete in time $L^{\infty}_t(L^2_{\ssy
P}(L^2_x))$ convergence estimate for the Backward Euler time
discrete approximations of ${\widehat u}$, over a uniform
partition of $[0,T]$.
%
%
%
%
\begin{theorem}\label{TimeDiscreteErr1}
Let ${\widehat u}$ be the solution of \eqref{AC2} and $\{{\widehat
U}^m\}_{m=0}^{\ssy M}$ be the Backward Euler time-discrete
approximations specified in \eqref{BackE1}--\eqref{BackE2}. If
$k_m=\Delta{\tau}$ for $m=1,\dots,M$, then there exists constant
$C>0$, independent of $T$, $\Delta{t}$, $\Delta{x}$ and
$\Delta\tau$, such that
\begin{equation}\label{ElPasso}
\max_{1\leq m \leq {\ssy M}} \left\{{\mathbb E}\left[ \|{\widehat
U}^m-{\widehat u}(\tau_m,\cdot)\|_{\ssy
0,D}^2\right]\right\}^{\half} \leq \,C
\,\,\,{\widetilde\omega}(\Delta\tau,\epsilon)
\,\,\,\Delta{\tau}^{\frac{4-d}{8}-\epsilon},
\quad\forall\,\epsilon\in(0,\tfrac{4-d}{8}],
\end{equation}
where
${\widetilde\omega}(\Delta\tau,\epsilon):=[\epsilon^{-\frac{1}{2}}
+(\Delta\tau)^{\epsilon}
\,(p_d(\Delta\tau^{\frac{1}{4}}))^{\frac{1}{2}}]$.
\end{theorem}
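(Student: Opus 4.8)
The strategy is to represent both the time-discrete approximation $\widehat U^m$ and the continuous solution $\widehat u(\tau_m,\cdot)$ using the same driving noise $\widehat W$, subtract, and split the resulting error into a ``deterministic-type'' piece (reflecting the replacement of the exact parabolic semigroup $\mathcal S(t)$ by its Backward Euler rational approximation) and a ``stochastic-increment'' piece. Concretely, since $\widehat W$ is piecewise constant on the cells $S_{n,\mu}$, both $\widehat u(\tau_m,\cdot)$ and $\widehat U^m$ are linear functionals of the family $(R^{n,\mu})$; after applying It\^o's isometry \eqref{Ito_Isom} together with the Green-kernel expansion \eqref{GreenKernel}, the quantity ${\mathbb E}[\|\widehat U^m-\widehat u(\tau_m,\cdot)\|_{0,D}^2]$ reduces to a sum over $\alpha\in\nset^d$ of $(R^{n,\mu})$-variances multiplied by squared differences between $e^{-\lambda_\alpha^2(\tau_m-s)}$-type weights and the corresponding products of the Backward Euler amplification factor $(1+k_j\lambda_\alpha^2)^{-1}$. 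Because $k_m=\Delta\tau$ is uniform, the discrete solution operator is simply $E_\alpha^{m-j+1}$ with $E_\alpha:=(1+\Delta\tau\,\lambda_\alpha^2)^{-1}$, which makes the Fourier-mode analysis explicit.

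First I would set up the mode-by-mode error. Writing $\widehat u(\tau_m,x)=\sum_\alpha \widehat u_\alpha(\tau_m)\varepsilon_\alpha(x)$ and $\widehat U^m=\sum_\alpha \widehat U^m_\alpha\varepsilon_\alpha(x)$, the orthonormality of $(\varepsilon_\alpha)$ gives ${\mathbb E}[\|\widehat U^m-\widehat u(\tau_m,\cdot)\|_{0,D}^2]=\sum_\alpha {\mathbb E}[|\widehat U^m_\alpha-\widehat u_\alpha(\tau_m)|^2]$. For each mode, $\widehat u_\alpha(\tau_m)$ solves the scalar SDE-type convolution $\int_0^{\tau_m}\!\!\int_D e^{-\lambda_\alpha^2(\tau_m-s)}\varepsilon_\alpha(y)\widehat W(s,y)\,dsdy$ and $\widehat U^m_\alpha$ solves the scalar Backward Euler recursion driven by $\int_{\Delta_j}\!\!\int_D \varepsilon_\alpha(y)\widehat W(s,y)\,dsdy$; using It\^o isometry the error variance becomes a sum over $(n,\mu)$ of $(\Delta t(\Delta x)^d)$-weighted squared kernel differences. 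The main technical estimate is then the classical one for Backward Euler applied to a scalar linear ODE: there is a constant $C$, independent of $\alpha$ and $\Delta\tau$, with $\sum_{m}\Delta\tau\,|E_\alpha^{m}-e^{-\lambda_\alpha^2 m\Delta\tau}|^2 \lesssim (\text{something})$ and, more to the point, the $L^\infty$-in-$m$ bound on the error weights that one needs here. I would also recycle the deterministic estimate \eqref{Ydaspis900} from Proposition~\ref{DetPropo1}, applied with the ``initial data'' being each frozen noise increment, exactly in the spirit of how a time-discrete stochastic convolution error is bounded by summing deterministic semigroup-error contributions.

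Next I would assemble the bound. After the mode reduction, one is left to estimate a sum of the form $\sum_\alpha w_\alpha \cdot \Phi_\alpha(\Delta\tau)$, where $w_\alpha$ captures the $\Delta x$-independent part of the noise normalization (here it is of order $\lambda_\alpha^{-2}$ after integrating the exponential, or $\lambda_\alpha^{-2}(1-e^{-\lambda_\alpha^2\Delta\tau})$-type quantities as in the proof of Theorem~\ref{BIG_Qewrhma1}) and $\Phi_\alpha$ is the squared Backward-Euler-vs-exact discrepancy. Splitting according to whether $\lambda_\alpha^2\Delta\tau \le 1$ or $>1$: in the first regime one Taylor-expands $E_\alpha$ against $e^{-\lambda_\alpha^2\Delta\tau}$ to gain a factor $(\lambda_\alpha^2\Delta\tau)^2$; in the second regime both quantities are small and one uses crude bounds plus the decay in $n$. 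Interpolating between these regimes produces, for any $\epsilon\in(0,\frac{4-d}{8}]$, a bound of the shape $C\,\epsilon^{-1/2}\Delta\tau^{\frac{4-d}{8}-\epsilon}$ for the ``smooth-mode'' contribution (via Lemma~\ref{Series_Asym_1}), while the low-frequency/near-diagonal contribution is handled exactly as in $\Theta_B$ of Theorem~\ref{BIG_Qewrhma1}, giving $C\,(p_d(\Delta\tau^{1/4}))^{1/2}\Delta\tau^{\frac{4-d}{8}}$ via Lemma~\ref{Series_Asym_2}. Adding the two and recognizing $\widetilde\omega(\Delta\tau,\epsilon)=\epsilon^{-1/2}+(\Delta\tau)^\epsilon(p_d(\Delta\tau^{1/4}))^{1/2}$ yields \eqref{ElPasso}.

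\emph{The main obstacle.} I expect the genuinely delicate point to be the per-mode Backward Euler error estimate that is \emph{uniform in the frequency index} $\alpha$: naive bounds on $|(1+\Delta\tau\,\lambda_\alpha^2)^{-m}-e^{-m\Delta\tau\lambda_\alpha^2}|$ degrade badly as $\lambda_\alpha\to\infty$, and one must exploit the fact that \emph{both} terms are then exponentially (resp.\ algebraically) small and that the noise weight $\lambda_\alpha^{-2}$ supplies the summability in $\alpha$ only after a careful $\Delta\tau^\gamma$-versus-$\lambda_\alpha^{-2(2-\gamma)}$ trade-off — the same mechanism as in \eqref{marine_poros1}–\eqref{corv5}, but now with the Backward Euler factor in place of the pure exponential, and with the extra subtlety that the near-diagonal time cell $n=\widehat N(\tau_m)$ must be treated separately (as in \eqref{gaga}). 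Getting the $\epsilon^{-1/2}$ dependence sharp — rather than a cruder $\epsilon^{-1}$ — requires invoking Lemma~\ref{Series_Asym_1} at the right exponent, so bookkeeping the powers of $|\alpha|_{\nset^d}$ correctly through the interpolation is where the care is needed.
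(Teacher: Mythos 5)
Your proposal follows essentially the same route as the paper's proof: after It\^o isometry the error at $\tau_m$ reduces to a mode-by-mode comparison of the Backward Euler rational factors $(1+\Delta\tau\,\lambda_\alpha^2)^{-(m-\ell+1)}$ with the exact semigroup, split into a grid-point semigroup-approximation part (bounded by the deterministic $\ell^2$-in-time estimate \eqref{Ydaspis900} applied with $w_0=\varepsilon_\alpha$, summed over $\alpha$ via Lemma~\ref{Series_Asym_1} with $\theta=\tfrac{4-d}{8}-\epsilon$) and a semigroup time-increment part bounded exactly as $\Theta_{\ssy B}$ in Theorem~\ref{BIG_Qewrhma1} via Lemma~\ref{Series_Asym_2}. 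The one imprecision is your remark that the key per-mode ingredient is an $L^\infty$-in-$m$ bound: what is actually needed (and what \eqref{Ydaspis900} supplies) is the accumulated $\ell^2$-in-time bound $\sum_{\ell\le m}\Delta\tau\,\|\Lambda^{\ell}\varepsilon_\alpha-{\mathcal S}(\tau_\ell)\varepsilon_\alpha\|_{\ssy 0,D}^2$, but since you explicitly recycle that estimate the plan is sound.
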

%
%
%
%
%
%
%
%
%
\begin{proof}
Let $I:L^2(D)\to L^2(D)$ be the identity operator,
$\Lambda:L^2(D)\to{\bfdot H}^4(D)$ be the inverse elliptic
operator $\Lambda:=(I+\Delta{\tau}\,\Delta^2)^{-1}$ which has
Green function $G_{\ssy\Lambda}(x,y)=
\sum_{\alpha\in\nset^d}\frac{\varepsilon_{\alpha}(x)
\,\varepsilon_{\alpha}(y)}{1+\Delta\tau\lambda_{\alpha}^2}$, i.e.
$\Lambda{f}(x)=\int_{\ssy D}G_{\ssy\Lambda}(x,y)f(y)\,dy$ for
$x\in{\overline D}$ and $f\in L^2(D)$. Obviously,
$G_{\ssy\Lambda}(x,y)=G_{\ssy\Lambda}(y,x)$ for $x,y\in D$, and
$G\in L^2(D\times D)$.
Also, for $m\in\nset$, we
denote by $G_{{\ssy\Lambda},m}$ the Green function of
$\Lambda^m$.
Thus, from \eqref{BackE2}, using an induction argument, we
conclude that
${\widehat U}^m=\sum_{j=1}^{\ssy m} \int_{\ssy\Delta_j}
\Lambda^{m-j+1}{\widehat W}(\tau,\cdot)\,d\tau$
for $m=1,\dots,M$, which is written,
equivalently, as follows:
\begin{equation}\label{Anaparastash1}
{\widehat U}^m(x)
=\int_0^{\tau_m}\!\!\!\int_{\ssy D}
\,{\widehat {\mathcal K}}_m(\tau;x,y)\,{\widehat W}(\tau,y)\,dyd\tau
\quad\forall\,x\in{\overline D}, \ken m=1,\dots,M,
\end{equation}
where
${\widehat {\mathcal K}}_m(\tau;x,y):=\sum_{j=1}^m{\mathcal
X}_{\ssy\Delta_j}(\tau) \,G_{{\ssy \Lambda},m-j+1}(x,y)
\quad\forall\,\tau\in[0,T],\ken\forall\,x,y\in D$.
\par
Let $m\in\{1,\dots,M\}$ and ${\mathcal E}^m:={\mathbb E}\big[
\|{\widehat U}^m -{\widehat u}(\tau_m,\cdot)\|_{\ssy 0,D}^2\big]$.
First, we use \eqref{Anaparastash1}, \eqref{HatUform},
\eqref{Ito_Isom} and \eqref{HSxar}, to obtain
\begin{equation*}
\begin{split}
{\mathcal E}^m&={\mathbb E}\Big[\,\int_{\ssy D}\Big( \int_0^{\ssy
T} \!\!\!\int_{\ssy D} {\mathcal X}_{(0,\tau_m)}(\tau)
\,\big[{\widehat{\mathcal
K}}_m(\tau;x,y)-G(\tau_m-\tau;x,y)\big]\,
{\widehat W}(\tau,y)\,dyd\tau\Big)^2\,dx\Big]\\
%
%
&=\tfrac{1}{\Delta{t}\,(\Delta{x})^d} \int_{\ssy D}
\Bigg\{\sum_{n\in{\mathcal N}_{\star}}\sum_{\mu\in{\mathcal
J}_{\star}^d} \Bigg(\int_{\ssy S_{n,\mu}} {\mathcal
X}_{(0,\tau_m)}(\tau) \,\big[{\widehat{\mathcal K}}_m(\tau;x,y)
-G(\tau_m-\tau;x,y)\big]\,d\tau dy\Bigg)^2\Bigg\}\,dx\\
\end{split}
\end{equation*}
Then, we apply the Cauchy-Schwarz inequality and \eqref{HSxar} to
arrive at
\begin{equation*}
\begin{split}
%
{\mathcal E}^m&\leq\int_0^{\tau_m}\left(\int_{\ssy D}\!\int_{\ssy
D} \,\big[{\widehat{\mathcal K}}_m(\tau;x,y)
-G(\tau_m-\tau;x,y)\big]^2\,dydx\right)\,d\tau\\
%
%
&\leq\sum_{\ell=1}^{m}\int_{\ssy\Delta_{\ell}} \left(\int_{\ssy
D}\!\int_{\ssy D} \,\big[G_{{\ssy\Lambda},m-\ell+1}(x,y)
-G(\tau_m-\tau;x,y)\big]^2\,dydx\right)\,d\tau.\\
%
%
&\leq\sum_{\ell=1}^m\int_{\ssy\Delta_{\ell}} \|\Lambda^{m-\ell+1}
-{\mathcal S}(\tau_m-\tau)\|_{\ssy\rm
HS}^2\,d\tau.\\
\end{split}
\end{equation*}
Now, we introduce the splitting
\begin{equation}\label{mainerrorF}
{\mathcal E}^m\leq\,{\mathcal B}_1^m+{\mathcal B}_2^m,
\end{equation}
where
\begin{equation*}
\begin{split}
{\mathcal B}_1^m&:=\,2\,\sum_{\ell=1}^m\int_{\ssy\Delta_{\ell}}
\,\|\Lambda^{m-\ell+1}-{\mathcal S}(\tau_m-\tau_{\ell-1})
\|_{\ssy\rm HS}^2\,d\tau,\\
{\mathcal B}_2^m&:=\,2\,\sum_{\ell=1}^m\int_{\ssy\Delta_{\ell}}\,
\|{\mathcal S}(\tau_m-\tau_{\ell-1}) -{\mathcal
S}(\tau_m-\tau)\|_{\ssy\rm HS}^2
\,d\tau.\\
\end{split}
\end{equation*}
\par\noindent
{\tt Estimation of ${\mathcal B}_1^m$}: By the definition of the
Hilbert-Schmidt norm, we have
\begin{equation*}
\begin{split}
%
%
{\mathcal B}_1^m&\leq\,2\,\Delta\tau\,\sum_{\ell=1}^m
\left(\,\sum_{\alpha\in\nset^d}
\|\Lambda^{m-\ell+1}\varepsilon_{\alpha} -{\mathcal
S}(\tau_m-\tau_{\ell-1})\varepsilon_{\alpha}
\|^2_{\ssy 0,D}\,\right)\\
%
%
&\leq\,2\,\sum_{\alpha\in\nset^d}\left(\,
\sum_{\ell=1}^m\,\Delta\tau\, \|\Lambda^{m-\ell+1}\varepsilon_k
-{\mathcal S}(\tau_m-\tau_{\ell-1})\varepsilon_k\|^2_{\ssy 0,D}
\,\right)\\
%
%
&\leq\,2\,\sum_{\alpha\in\nset^d}\left(\, \sum_{\ell=1}^m
\,\Delta\tau\,\|\Lambda^{\ell}\varepsilon_k -{\mathcal
S}(\tau_{\ell})\varepsilon_k
\|^2_{\ssy 0,D}\,\right).\\
\end{split}
\end{equation*}
Let $\theta\in[0,\frac{4-d}{8})$. Using the deterministic error
estimate \eqref{Ydaspis900}, we obtain
\begin{equation}\label{trabajito}
\begin{split}
{\mathcal B}_1^m&\leq\,C\,\Delta{\tau}^{2\theta}
\,\sum_{\alpha\in\nset^d}\|\varepsilon_{\alpha}\|^2_{\ssy
{\bfdot H}^{4\theta-2}}\\
&\leq\,C\,\Delta{\tau}^{2\theta}\, \,\sum_{\alpha\in\nset^d}
\tfrac{1}{|\alpha|_{\ssy \nset^d}^{4(1-2\theta)}}.\\
\end{split}
\end{equation}
The convergence of the series is ensured because $4(1-2\theta)>d$.
\par\noindent
{\tt Estimation of ${\mathcal B}_2^m$}: Using, again,
the definition of the
Hilbert-Schmidt norm we have
\begin{equation}\label{Ydaspis950}
{\mathcal B}_2^m=\,2\,\sum_{\alpha\in\nset^d}\left(\,
\sum_{\ell=1}^m\int_{\ssy\Delta_{\ell}} \|{\mathcal
S}(\tau_m-\tau_{\ell-1})\varepsilon_{\alpha} -{\mathcal
S}(\tau_m-\tau)\varepsilon_{\alpha} \|^2_{\ssy 0,D}
\,d\tau\,\right).
\end{equation}
Observing that
${\mathcal
S}(t)\varepsilon_{\alpha}=e^{-\lambda_{\alpha}^2t}\,\varepsilon_{\alpha}$
for $t\ge 0$, \eqref{Ydaspis950} yields
\begin{equation*}
\begin{split}
{\mathcal B}_2^m&=\,2\,\sum_{\alpha\in\nset^d}\left[\,
\sum_{\ell=1}^m\int_{\ssy\Delta_{\ell}} \left(\int_{\ssy D}\left[
e^{-\lambda_{\alpha}^2(\tau_m-\tau_{\ell-1})}
-e^{-\lambda_{\alpha}^2(\tau_m-\tau)}\right]^2
\varepsilon_{\alpha}^2(x)\,dx\right)
\,d\tau\,\right]\\
&=\,2\,\sum_{\alpha\in\nset^d}\left[\,
\sum_{\ell=1}^m\int_{\ssy\Delta_{\ell}}
e^{-2\lambda_{\alpha}^2(\tau_m-\tau)} \left[1-
e^{-\lambda_{\alpha}^2(\tau-\tau_{\ell-1})}\right]^2
\,d\tau\,\right]\\
&\leq\,2\,\sum_{\alpha\in\nset^d}\big(1-e^{-\lambda_{\alpha}^2\,\Delta\tau
}\,\big)^2 \left[\, \int_0^{\tau_m}
e^{-2\lambda_{\alpha}^2(\tau_m-\tau)}
\,d\tau\,\right]\\
&\leq\,\sum_{\alpha\in\nset^d}
\tfrac{1-e^{-2\lambda_{\alpha}^2\,\Delta\tau}}{\lambda_{\alpha}^2},
\end{split}
\end{equation*}
from which, applying \eqref{ASYM_2}, we obtain
\begin{equation}\label{Ydaspis952}
{\mathcal B}_2^m\leq\,C\,\,p_d(\Delta\tau^{\frac{1}{4}})
\,\,\Delta\tau^{\frac{4-d}{4}}.
\end{equation}
\par
Thus, we obtain the estimate \eqref{ElPasso} as a conclusion of
\eqref{mainerrorF}, \eqref{trabajito}, \eqref{Ydaspis952} and
\eqref{ASYM_1}.
\end{proof}
%
%
%
%
%
\section{Space-Discrete Approximations}\label{SECTION4}
%
%
%
Let $r\in\{2,3,4\}$. The space-discrete approximation of the
solution ${\widehat u}$ of \eqref{AC2} is a stochastic function
${\widehat u}_h:[0,T]\to M_h$ such that
\begin{equation}\label{Semi}
\aligned
\partial_t\widehat{u}_h+B_h&\widehat{u}_h
=P_h{\widehat W}
\quad\text{\rm on}\ken(0,T],\\
&{\widehat u}_h(0)=0\\
\endaligned\quad\quad\mbox{\rm a.s.}.
\end{equation}
To develop an $L^{\infty}_t(L^2_{\ssy P}(L_x^2))$ convergence
estimate for the space-discrete approximation ${\widehat u}_h$, we
will derive first an $L^2_t(L^2_x)$ error estimate for the
corresponding space-discrete approximation $w_h$ of the solution
$w$ of \eqref{Det_Parab} (cf. \cite{YubinY04} and \cite{BinLi}),
which is a function $w_h:[0,T]\to M_h$ such that
\begin{equation}\label{SemiHomo}
\begin{gathered}
\partial_tw_h+B_hw_h=0\quad\text{\rm on}\ken(0,T],\\
w_h(0)=P_hw_0.\\
\end{gathered}
\end{equation}
Since $w_h$ can be considered as the value of a linear operator of
the initial condition $w_0$, we will write it as
$w_h(t,\cdot)=[{\mathcal S}_h(t)w_0](\cdot)$ for $t\in[0,T]$.
Thus, by Duhamel's principle (cf. \cite{Thomee}), we have
\begin{equation}\label{Duhamel_uhat}
{\widehat u}_h(t,x)=\int_0^t [{\mathcal S}_h(t-s) {\widehat
W}(s,\cdot)](x)\,ds\quad\text{\rm a.s.}.
\end{equation}
%
%
%
\begin{proposition}\label{LowRegSD}
Let $r\in\{2,3,4\}$, $w$ be the solution of \eqref{Det_Parab} and
$w_h$ be its space-discrete approximation given in
\eqref{SemiHomo}. If $w_0\in{\bfdot H}^3(D)$, then, there exists a
constant $C>0$, independent of $T$ and $h$, such that
\begin{equation}\label{karx101}
\Bigg(\int_0^{\ssy T}\|w-w_h\|^2_{\ssy 0,D}\,dt\Bigg)^{\half}
\leq\,C\,h^{{\widetilde\nu}(r,\theta)} \,\|w_0\|_{\ssy {\bfdot
H}^{{\widetilde\xi}(r,\theta)}} \quad\forall\,\theta\in[0,1],
\end{equation}
where
\begin{equation}\label{Basilico_4}
{\widetilde\nu}(r,\theta):=\left\{ \aligned
&2\,\theta\quad\hskip0.25truecm\text{\rm if}\ken r=2\\
&4\,\theta\quad\ken\text{\rm if}\ken r=3\\
&5\,\theta\quad\ken\text{\rm if}\ken r=4\\
\endaligned
\right.
\quad\quad\text{\rm and}\quad\quad
{\widetilde\xi}(r,\theta):=\left\{ \aligned
&3\theta-2 \quad\text{\rm if}\ken r=2\\
&4\theta-2\quad\text{\rm if}\ken r=3\\
&5\theta-2\quad\text{\rm if}\ken r=4\\
\endaligned
\right..
\end{equation}
\end{proposition}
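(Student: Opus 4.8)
The plan is to bound the error $w - w_h$ by splitting it via an elliptic (Ritz-type) projection and exploiting the smoothing properties of both the continuous and discrete semigroups, then to insert spectral/interpolation estimates for $\|{\mathcal S}(t)w_0\|_{\dot H^p}$ of the form \eqref{Reggo3}. Concretely, since $w(t) = {\mathcal S}(t)w_0$ and $w_h(t) = {\mathcal S}_h(t)w_0$ solve \eqref{Det_Parab} and \eqref{SemiHomo}, I would write $e(t) := w(t) - w_h(t)$ and use the standard decomposition $e = (I - R_h)w + (R_h w - w_h) =: \rho + \vartheta$, where $R_h$ is the Ritz projection associated with the biharmonic form, i.e. $(\Delta R_h v, \Delta \chi)_{0,D} = (\Delta v, \Delta \chi)_{0,D}$ for all $\chi \in M_h$; note $B_h R_h = P_h \Delta^2$ on $\dot H^4(D)$. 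The term $\rho$ is controlled directly by the approximation property \eqref{Sh_H2} together with \eqref{C1_FEM}/\eqref{ARA2}, while $\vartheta \in M_h$ satisfies $\partial_t \vartheta + B_h \vartheta = -P_h \partial_t \rho$, $\vartheta(0) = 0$, so by Duhamel $\vartheta(t) = -\int_0^t {\mathcal S}_h(t-s) P_h \partial_t\rho(s)\,ds$.

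The key steps, in order, would be: (i) derive the pointwise-in-time bound $\|\rho(t)\|_{0,D} \le C h^{\sigma(r)} \|w(t)\|_{\dot H^{\sigma(r)}}$, where $\sigma(r) \in \{2,4,5\}$ matches the $r=2,3,4$ cases of Proposition~\ref{H2CASE} (the factor $h^2,h^4,h^5$), and similarly estimate $\|\partial_t \rho(t)\|_{0,D} = \|(I-R_h)\partial_t w(t)\|_{0,D}$ using $\partial_t w = -\Delta^2 w = -{\mathcal S}(t)\Delta^2 w_0$; (ii) use the standard energy/smoothing estimate for the discrete homogeneous equation, namely $\big(\int_0^T \|{\mathcal S}_h(t)\psi\|_{0,D}^2\,dt\big)^{1/2} \le C\|\psi\|_{\dot H^{-2}}$ (obtained by testing \eqref{SemiHomo} with $B_h^{-1}w_h$ and integrating, using \eqref{PanwFragma1}), to get $\big(\int_0^T \|\vartheta\|_{0,D}^2\,dt\big)^{1/2} \le C \int_0^T \|\partial_t\rho(s)\|_{\dot H^{-2}}\,ds$ or, more carefully, a time-convolution version; (iii) feed the bounds from (i) into (ii) and then invoke \eqref{Reggo3} with the appropriate choice of $\ell$, $\beta$, $p$, $q$ to convert time integrals of $\|\partial_t^\ell {\mathcal S}(\tau)w_0\|_{\dot H^p}^2$ into $\|w_0\|_{\dot H^{p+4\ell - 2\beta - 2}}^2$; (iv) match exponents to read off ${\widetilde\nu}(r,\theta)$ and ${\widetilde\xi}(r,\theta)$ as in \eqref{Basilico_4}, using interpolation in $\theta \in [0,1]$ between the endpoint estimates $\theta = 0$ (trivial $L^2$ stability) and $\theta = 1$ (the full-regularity case).

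The main obstacle I anticipate is the bookkeeping of regularity indices so that the final exponents $({\widetilde\nu},{\widetilde\xi})$ come out exactly as stated, since the $r=2,3,4$ cases carry different powers of $h$ in Proposition~\ref{H2CASE} and the term $\vartheta$ loses some time-regularity through the $\partial_t\rho$ source. In particular, one must be careful that $\int_0^T \|\partial_t w(s)\|_{\dot H^{p}}^2\,ds$-type quantities only require $w_0 \in \dot H^{p+2}$ (via \eqref{Reggo3} with $\ell = 1$, $\beta = 0$), and that the weakest-norm estimate on $\vartheta$ (controlled in $\dot H^{-2}$) is what lets $w_0$ sit in $\dot H^{3\theta - 2}$ rather than something more demanding for $r=2$. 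A secondary technical point is handling the initial layer: $\rho(0) = (I-R_h)w_0$ need not vanish, but since we only seek an $L^2$-in-time bound, no pointwise-at-$t=0$ estimate on $\vartheta$ is needed, and the singularity $\tau^\beta$ weight in \eqref{Reggo3} is exactly tailored to absorb the mild blow-up of $\|\partial_t^\ell {\mathcal S}(\tau)w_0\|$ near $\tau = 0$. The case distinction on $r$ will most naturally be organized by first proving the $r = 4$ estimate (highest order, using \eqref{ARA2} with $h^5$ and \eqref{soublaki1}--\eqref{soublaki2}-type duality) and then indicating the analogous, simpler arguments for $r = 3$ and $r = 2$.
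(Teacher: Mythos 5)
Your plan is the classical Thom\'ee splitting $e=\rho+\vartheta$ with $\rho=(I-R_h)w$ and $\vartheta=R_hw-w_h$, which pushes all the difficulty into the $\vartheta$-equation, whose source is $P_h\partial_t\rho$. The paper does something structurally different and much lighter: it keeps $e=w-w_h$ whole and derives the error equation $T_{\ssy B,h}e_t+e=\rho$ with $\rho=(T_{\ssy B,h}-T_{\ssy B})\Delta^2w$ (note that $T_{\ssy B,h}\Delta^2w=B_h^{-1}P_h\Delta^2w=R_hw$, so this $\rho$ is exactly your $-(I-R_h)w$). Testing with $e$, using that $T_{\ssy B,h}$ is selfadjoint and nonnegative and that $T_{\ssy B,h}e(0)=T_{\ssy B,h}(w_0-P_hw_0)=0$, gives $\int_0^{T}\|e\|_{\ssy 0,D}^2\,dt\le\int_0^{T}\|\rho\|_{\ssy 0,D}^2\,dt$ in one line; the $\theta=1$ endpoint then follows from \eqref{ARA2} and \eqref{Reggo3} with $\ell=\beta=0$, the $\theta=0$ endpoint from separate $L^2_t$ stability bounds for $w$ and $w_h$ in terms of $\|w_0\|_{\ssy -2,D}$, and interpolation finishes. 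No time derivative of $\rho$ ever appears, which is the whole point.

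Your route, as sketched, has two concrete problems. First, $\vartheta(0)=R_hw_0-P_hw_0=-P_h\rho(0)$, which is not zero in general, so your Duhamel representation is missing the term ${\mathcal S}_h(t)\vartheta(0)$; you acknowledge $\rho(0)\neq0$ later, contradicting the earlier claim. Second, and decisively, step (ii) does not close at the stated regularity. The source $\partial_t\rho=-(I-R_h)\Delta^2w$ costs four extra spatial derivatives of $w$: for $r=2$ the available elliptic-projection estimate gives $\|\partial_t\rho\|_{\ssy 0,D}\le C\,h^2\,\|\Delta^2w\|_{\ssy{\bfdot H}^3}=C\,h^2\,\|w\|_{\ssy{\bfdot H}^7}$, and passing to $\|\partial_t\rho\|_{\ssy -2,D}$ gains no further powers of $h$ because duality is already saturated at the order $2(r-1)$ used in \eqref{ARA2}. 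Feeding this into an unweighted energy bound for $\vartheta$ and invoking \eqref{Reggo3} with $\ell=0$, $\beta=0$ then requires $w_0\in{\bfdot H}^5(D)$, not ${\bfdot H}^{1}(D)={\bfdot H}^{3\theta-2}(D)$ at $\theta=1$. Recovering the claimed data regularity along your route requires combining the $\tau^{\beta}$-weighted version of \eqref{Reggo3} with smoothing estimates for ${\mathcal S}_h(t-s)$ inside the Duhamel integral (the full nonsmooth-data machinery), which you only allude to; that is precisely the hard part, and without it the exponents $({\widetilde\nu},{\widetilde\xi})$ in \eqref{Basilico_4} are not obtained. As written, the proposal does not establish \eqref{karx101}.
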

%
%
%
%
\begin{proof}
Let $e:=w-w_h$ and $\rho:=(T_{\ssy B,h}-T_{\ssy B})\Delta^2w$. We
will derive \eqref{karx101} by interpolation, after showing that
it holds for $\theta=1$ and $\theta=0$.
\par
Observing that $T_{{\ssy B},h}e_t+e=\rho$ on $[0,T]$, and then
taking the $(\cdot,\cdot)_{\ssy 0,D}$ inner product with $e$, we
easily arrive at
\begin{equation}\label{baga1}
\int_0^{\ssy T}\|e\|_{\ssy 0,D}^2\,dt \leq\,\int_0^{\ssy
T}\|\rho\|_{\ssy 0,D}^2\,dt.
\end{equation}
For $r=2$, using \eqref{baga1}, \eqref{ARA2}, \eqref{H_equiv} and
\eqref{Reggo3}, we have
\begin{equation}\label{baga2}
\begin{split}
\left(\int_0^{\ssy T}\|e\|^2_{\ssy 0,D}\,dt\right)^{\frac{1}{2}}
&\leq\,C\,h^2\,\left(\int_0^{\ssy T}
\|w\|_{\ssy{\bfdot H}^3}^2\,dt\right)^{\frac{1}{2}}\\
&\leq\,C\,h^2\,\|w_0\|_{\ssy{\bfdot H}^1}.\\
\end{split}
\end{equation}
Also, for $r=3,4$, combining \eqref{baga1}, \eqref{ARA2},
\eqref{H_equiv} and \eqref{Reggo3} we get
\begin{equation}\label{baga3}
\begin{split}
\left(\int_0^{\ssy T}\|e\|^2_{\ssy 0,D}
\,dt\right)^{\frac{1}{2}}&\leq\,C\,h^{r+1}\,\left(\int_0^{\ssy T}
\|w\|_{\ssy {\bfdot H}^{r+1}}^2\,dt\right)^{\frac{1}{2}}\\
&\leq\,C\,h^{r+1}\,\|w_0\|_{\ssy {\bfdot H}^{r-1}}.\\
\end{split}
\end{equation}
Thus, relations \eqref{baga2} and \eqref{baga3} yield
\eqref{karx101} for $\theta=1$.
\par
Since $T_{\ssy B}w_t+w=0$ on $[0,T]$, we obtain $(T_{\ssy
B}w_t,w)_{\ssy 0,D}+\|w\|_{\ssy 0,D}^2=0$ on $[0,T]$, which,
along with \eqref{TB-prop1}, yields $\tfrac{d}{dt}\|T_{\ssy
E}w\|_{\ssy 0,D}^2+\|w\|_{\ssy 0,D}^2=0$ on $[0,T]$.
Then, integrating over $[0,T]$ and using \eqref{ElReg1}, we get
\begin{equation}\label{karx103}
\left(\int_0^{\ssy T}\|w\|_{\ssy
0,D}^2\,dt\right)^{\frac{1}{2}}\leq\,C\,\|w_0\|_{\ssy -2, D}.
\end{equation}
Since $T_{\ssy B,h}\partial_tw_h+w_h=0$ on $[0,T]$, we obtain
$(T_{\ssy B,h}\partial_tw_h,w_h)_{\ssy 0,D}+\|w_h\|_{\ssy
0,D}^2=0$ on $[0,T]$, which, along with \eqref{adjo_Bh}, yields
$\tfrac{d}{dt}(T_{\ssy B,h}w_h,w_h)_{\ssy 0,D}+\|w_h\|_{\ssy
0,D}^2=0$ on $[0,T]$.
Then, integrating over $[0,T]$ and using \eqref{PanwFragma1}, we
have
\begin{equation}\label{karx104}
\begin{split}
\left(\int_0^{\ssy T}\|w_h\|_{\ssy 0,D}^2\,dt\right)^{\frac{1}{2}}
&\leq \|\Delta T_{\ssy B,h}P_hw_0\|_{\ssy 0,D}\\
&\leq \|\Delta T_{\ssy B,h}w_0\|_{\ssy 0,D}\\
&\leq\,C\,\|w_0\|_{\ssy -2, D}.\\
\end{split}
\end{equation}
Hence, from \eqref{karx103}, \eqref{karx104} and
\eqref{minus_equiv}, we obtain
$\left(\int_0^{\ssy T}\|e\|_{\ssy 0,D}^2\,dt\right)^{\frac{1}{2}}
\leq\,C\,\|w_0\|_{\ssy{\bfdot H}^{-2}}$,
which yields \eqref{karx101} with $\theta=0$.
\end{proof}
%
%
%
%
\par
Next lemma shows that a discrete analogue of \eqref{deter_green} holds.
%
%
%
\begin{lemma}\label{prasino}
Let $r\in\{2,3,4\}$ and $w_h$ be the space-discrete approximation
of the solution $w$ of \eqref{Det_Parab} defined in
\eqref{SemiHomo}. Then, there exists a map $G_h:[0,T]\rightarrow
C(\overline{D\times D})$ such that
\begin{equation}\label{semi_green}
w_h(t;x)=\int_{\ssy D} G_h(t;x,y)\,w_0(y)\,dy
\quad\forall\,t\in[0,T],\ken\forall\,x\in{\overline D},
\end{equation}
and $G_h(t;x,y)=G_h(t;y,x)$ for $x,y\in\overline{D}$ and $t\in[0,T]$.
\end{lemma}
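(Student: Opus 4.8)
The plan is to exhibit the kernel $G_h$ explicitly by expanding $w_h$ in the eigenbasis of the discrete biharmonic operator $B_h$. First I would note that $B_h:M_h\to M_h$ is selfadjoint and positive definite with respect to $(\cdot,\cdot)_{\ssy 0,D}$ (this follows at once from \eqref{DLaplacianI}, since $(B_h\varphi,\varphi)_{\ssy 0,D}=\|\Delta\varphi\|_{\ssy 0,D}^2>0$ for $\varphi\in M_h\setminus\{0\}$, using that $M_h\subset H_0^1(D)$ so $\Delta\varphi=0$ forces $\varphi=0$ by \eqref{Poincare}). Hence there is a $(\cdot,\cdot)_{\ssy 0,D}$-orthonormal basis $\{\chi_j\}_{j=1}^{N_h}$ of $M_h$, $N_h:=\dim M_h$, with $B_h\chi_j=\Lambda_{h,j}\chi_j$ and $\Lambda_{h,j}>0$.

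Next I would solve \eqref{SemiHomo} in this basis. Writing $w_h(t,\cdot)=\sum_{j=1}^{N_h}c_j(t)\,\chi_j$, the equation $\partial_tw_h+B_hw_h=0$ decouples into $c_j'(t)+\Lambda_{h,j}c_j(t)=0$, while the initial condition $w_h(0)=P_hw_0$ gives $c_j(0)=(P_hw_0,\chi_j)_{\ssy 0,D}=(w_0,\chi_j)_{\ssy 0,D}$ (using $\chi_j\in M_h$ and the definition of $P_h$). Therefore $c_j(t)=e^{-\Lambda_{h,j}t}(w_0,\chi_j)_{\ssy 0,D}$, and substituting back,
\begin{equation*}
w_h(t,x)=\sum_{j=1}^{N_h}e^{-\Lambda_{h,j}t}\,\chi_j(x)\int_{\ssy D}\chi_j(y)\,w_0(y)\,dy
=\int_{\ssy D}\Bigl(\sum_{j=1}^{N_h}e^{-\Lambda_{h,j}t}\,\chi_j(x)\,\chi_j(y)\Bigr)w_0(y)\,dy.
\end{equation*}
So the natural candidate is $G_h(t;x,y):=\sum_{j=1}^{N_h}e^{-\Lambda_{h,j}t}\,\chi_j(x)\,\chi_j(y)$, which proves \eqref{semi_green}, and the symmetry $G_h(t;x,y)=G_h(t;y,x)$ is immediate from the symmetry of the summand.

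It remains to check the regularity claim $G_h(t;\cdot,\cdot)\in C(\overline{D\times D})$ for each $t\in[0,T]$. This is the only substantive point, and it is easy: the sum defining $G_h$ is finite, and each $\chi_j$ lies in $M_h\subset H^2(D)$ and is a piecewise polynomial over the triangulation of $D$; since $M_h$ consists of $C^1$ finite element functions, each $\chi_j$ is in particular continuous on $\overline D$, hence $(x,y)\mapsto\chi_j(x)\chi_j(y)$ is continuous on $\overline{D\times D}$, and a finite linear combination of continuous functions is continuous. (Continuity of $M_h$ functions also follows abstractly from the Sobolev embedding $H^2(D)\hookrightarrow C(\overline D)$ for $d\le 3$, which avoids invoking the $C^1$ structure.) I expect no real obstacle here; the one place to be slightly careful is making sure $M_h$ is finite-dimensional and that the eigenfunctions $\chi_j$ may be chosen to be actual functions (not merely equivalence classes) — which is automatic for a finite element space — so that pointwise evaluation and the pointwise identity \eqref{semi_green} make sense.
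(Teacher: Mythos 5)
Your proof is correct and follows essentially the same route as the paper: the paper's simultaneously $L^2$-orthonormal and $\gamma_h$-orthogonal basis (with $\gamma_h(\chi_{\ssy A},\chi_{\ssy B})=(\Delta\chi_{\ssy A},\Delta\chi_{\ssy B})_{\ssy 0,D}$) is exactly the $(\cdot,\cdot)_{\ssy 0,D}$-orthonormal eigenbasis of $B_h$ that you use, and both arguments then solve the decoupled ODEs to obtain the kernel $G_h(t;x,y)=\sum_j e^{-\lambda_{h,j}t}\chi_j(x)\chi_j(y)$. Your additional remarks on positive definiteness and on the continuity of $G_h$ are correct and only make explicit points the paper leaves implicit.
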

%
%
%
%
%
\begin{proof}
Let $\text{\rm dim}(M_h)=n_h$ and $\gamma_h:M_h\times
M_h\rightarrow\rset$ be an inner product on $M_h$ given by
$\gamma_h(\chi_{\ssy A},\chi_{\ssy B}):=(\Delta\chi_{\ssy A},
\Delta\chi_{\ssy B})_{\ssy 0,D}$ for $\chi_{\ssy A}$, $\chi_{\ssy
B}\in M_h$.
We can construct a basis $(\chi_j)_{j=1}^{n_h}$ of $M_h$ which is
$L^2(D)-$orthonormal, i.e., $(\chi_i,\chi_j)_{\ssy
0,D}=\delta_{ij}$ for $i,j=1,\dots,n_h$, and
$\gamma_h-$orthogonal, i.e., there are
$(\lambda_{h,\ell})_{\ell=1}^{\ssy n_h}\subset(0,+\infty)$ such
that $\gamma_h(\chi_i,\chi_j)=\lambda_{h,i}\,\delta_{ij}$ for
$i,j=1,\dots,n_h$ (see Section 8.7 in \cite{Golub}).
Thus, there exists a map $\omega:[0,T]\rightarrow{\rset}^{n_h}$
such that $w_h(t;x)=\sum_{j=1}^{n_h}\omega_j(t)\,\chi_j(x)$. Since
$w_h(0)=P_hw_0$, it follows that $\omega_j(0)=(w_0,\chi_j)_{\ssy
0,D}$ for $j=1,\dots,n_h$. Now, \eqref{SemiHomo} yields that
$\frac{d}{dt}\omega(t)=B\,\omega(t)$ for $t\in[0,T]$, where
$B\in\rset^{n_h\times n_h}$ with
$B_{ij}:=-\gamma_h(\chi_i,\chi_j)=-\lambda_{h,i}\,\delta_{ij}$ for
$i,j=1,\dots,n_h$. Hence, it follows that
$\omega_{\ell}(t)=e^{-\lambda_{h,\ell}\,t}\,(w_0,\chi_{\ell})_{\ssy
0,D}$ for $t\in[0,T]$ and $\ell=1,\dots,n_h$,
which yields \eqref{semi_green} with $G_h(t;x,y)= \sum_{j=1}^{n_h}
e^{-\lambda_{h,j}\,t}\chi_j(x)\chi_j(y)$.
\end{proof}
%
%
\par
We are ready to derive a convergence estimate, in an
$L^{\infty}_t(L^2_{\ssy P}(L^2_x))$ norm, for the space-discrete
approximation ${\widehat u}_h$ to the solution ${\widehat u}$ of
the regularized problem.
%
%
\begin{theorem}\label{coconut12}
Let $r\in\{2,3,4\}$, $\widehat{u}$ be the solution of \eqref{AC2}
and ${\widehat u}_h$ be its space-discrete approximation defined
in \eqref{Semi}. Then, there exist a constant $C>0$, independent
of $T$, $\Delta{t}$, $\Delta{x}$ and $h$, such that
\begin{equation}\label{soho3}
\max_{[0,T]}\left\{{\mathbb E}\left[ \|{\widehat u}_h-{\widehat
u}\|^2_{\ssy 0,D}\right] \right\}^{\half}
\leq\,C\,\epsilon^{-\frac{1}{2}}\,h^{\nu(r,d)},
\quad\forall\,\epsilon\in(0,\nu(r,d)],
\end{equation}
where
\begin{equation}\label{soho4}
\nu(r,d):=\left\{ \aligned
&\tfrac{4-d}{3}\quad\text{\rm if}\ken r=2\\
&\tfrac{4-d}{2}\quad\text{\rm if}\ken r=3,4\\
\endaligned
\right..
\end{equation}
\end{theorem}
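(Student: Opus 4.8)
The plan is to follow the same strategy used for the modeling error estimate in Theorem~\ref{BIG_Qewrhma1} and for the time-discrete estimate in Theorem~\ref{TimeDiscreteErr1}, namely to express $\widehat{u}_h-\widehat{u}$ as a stochastic integral, apply the It\^o isometry~\eqref{Ito_Isom} together with the Hilbert--Schmidt characterization~\eqref{HSxar}, and then reduce everything to a deterministic $L^2_t(L^2_x)$ estimate for the semidiscrete error $w-w_h$ applied to the eigenfunctions $\varepsilon_\alpha$, which is exactly what Proposition~\ref{LowRegSD} provides. Concretely, using~\eqref{HatUform}, the representation~\eqref{Duhamel_uhat}, and the semidiscrete Green function $G_h$ from Lemma~\ref{prasino}, I would write
\begin{equation*}
\widehat{u}_h(t,x)-\widehat{u}(t,x)=\int_0^{\ssy T}\!\!\!\int_{\ssy D}\big[\mathcal{X}_{(0,t)}(s)\,(G_h(t-s;x,y)-G(t-s;x,y))\big]\,{\widehat W}(s,y)\,dsdy,
\end{equation*}
but since $\widehat W$ is a sum of scaled indicators against $R^{n,\mu}$, I would instead work directly: write $\mathcal{E}(t):={\mathbb E}[\|\widehat{u}_h(t)-\widehat{u}(t)\|_{\ssy 0,D}^2]$, expand using the structure of $\widehat W$ exactly as in the proof of Theorem~\ref{BIG_Qewrhma1} (localize on the cells $S_{n,\mu}$, apply Cauchy--Schwarz on each cell), and bound $\mathcal{E}(t)\le\int_0^t\|\mathcal{S}_h(\tau)-\mathcal{S}(\tau)\|_{\ssy\rm HS}^2\,d\tau$.

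Next, by the definition of the Hilbert--Schmidt norm and the fact that $(\varepsilon_\alpha)_{\alpha\in\nset^d}$ is a complete orthonormal system, I would write
\begin{equation*}
\int_0^{\ssy T}\|\mathcal{S}_h(\tau)-\mathcal{S}(\tau)\|_{\ssy\rm HS}^2\,d\tau=\sum_{\alpha\in\nset^d}\int_0^{\ssy T}\|\mathcal{S}_h(\tau)\varepsilon_\alpha-\mathcal{S}(\tau)\varepsilon_\alpha\|_{\ssy 0,D}^2\,d\tau=\sum_{\alpha\in\nset^d}\int_0^{\ssy T}\|w-w_h\|_{\ssy 0,D}^2\,d\tau,
\end{equation*}
where in the last term $w$ solves~\eqref{Det_Parab} with $w_0=\varepsilon_\alpha$. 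Now I invoke Proposition~\ref{LowRegSD}: for each $\theta\in[0,1]$,
\begin{equation*}
\int_0^{\ssy T}\|w-w_h\|_{\ssy 0,D}^2\,d\tau\le\,C\,h^{2{\widetilde\nu}(r,\theta)}\,\|\varepsilon_\alpha\|_{\ssy{\bfdot H}^{{\widetilde\xi}(r,\theta)}}^2=C\,h^{2{\widetilde\nu}(r,\theta)}\,\lambda_\alpha^{{\widetilde\xi}(r,\theta)}=C\,h^{2{\widetilde\nu}(r,\theta)}\,\pi^{2{\widetilde\xi}(r,\theta)}\,|\alpha|_{\ssy\nset^d}^{2{\widetilde\xi}(r,\theta)},
\end{equation*}
so that $\mathcal{E}(t)\le C\,h^{2{\widetilde\nu}(r,\theta)}\sum_{\alpha\in\nset^d}|\alpha|_{\ssy\nset^d}^{2{\widetilde\xi}(r,\theta)}$. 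This series converges precisely when $2{\widetilde\xi}(r,\theta)<-d$, and to obtain the sharp exponent I would choose $\theta$ as large as possible subject to that strict inequality, then absorb the $\epsilon$-loss through Lemma~\ref{Series_Asym_1} (writing $2{\widetilde\xi}(r,\theta)=-d-c_\star\epsilon$ for a suitable $c_\star$), which gives the factor $\epsilon^{-1/2}$ in the bound.

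The bookkeeping is then elementary: for $r=3,4$ one has ${\widetilde\xi}=r\theta-2$ with $r\ge3$ and ${\widetilde\nu}=r\theta$; the constraint $r\theta-2<-d/2$ gives $\theta<\tfrac{4-d}{2r}$, hence the exponent of $h$ is $2{\widetilde\nu}=2r\theta$ approaching $2r\cdot\tfrac{4-d}{2r}=4-d$, i.e.\ $\nu=\tfrac{4-d}{2}$ as claimed. For $r=2$ one has ${\widetilde\xi}=3\theta-2$ while ${\widetilde\nu}=2\theta$; the constraint $3\theta-2<-d/2$ gives $\theta<\tfrac{4-d}{6}$, so $2{\widetilde\nu}=4\theta$ approaches $4\cdot\tfrac{4-d}{6}=\tfrac{2(4-d)}{3}$, i.e.\ $\nu=\tfrac{4-d}{3}$, matching~\eqref{soho4}. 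Finally, since $\widehat{u}_h(0)-\widehat{u}(0)=0$, taking the maximum over $[0,T]$ and the square root yields~\eqref{soho3}. I expect the main obstacle to be purely organizational rather than conceptual: carefully justifying the reduction $\mathcal{E}(t)\le\int_0^t\|\mathcal{S}_h(\tau)-\mathcal{S}(\tau)\|_{\ssy\rm HS}^2\,d\tau$ from the cell-wise expansion of $\widehat W$ (the same Cauchy--Schwarz-on-cells argument as in Theorem~\ref{BIG_Qewrhma1}), and correctly selecting, for each pair $(r,d)$, the optimal $\theta$ so that the exponent ${\widetilde\nu}(r,\theta)$ is maximized while the series over $\alpha$ still converges after the $\epsilon$-perturbation.
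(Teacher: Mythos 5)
Your proposal is correct and follows essentially the same path as the paper's own proof: represent $\widehat{u}_h-\widehat{u}$ as a stochastic integral against $\widehat W$ via the Green functions $G_h$ and $G$, reduce by the It\^o isometry and cell-wise Cauchy--Schwarz to $\int_0^t\|\mathcal{S}(s)-\mathcal{S}_h(s)\|_{\ssy\rm HS}^2\,ds$, expand the Hilbert--Schmidt norm over the eigenfunctions $\varepsilon_\alpha$, apply Proposition~\ref{LowRegSD} termwise, and balance the convergence constraint $-2{\widetilde\xi}(r,\theta)>d$ against the exponent $2{\widetilde\nu}(r,\theta)$ using Lemma~\ref{Series_Asym_1}. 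The only slip is cosmetic: for $r=3,4$ the formulas \eqref{Basilico_4} give ${\widetilde\nu}=(r+1)\theta$ and ${\widetilde\xi}=(r+1)\theta-2$ rather than $r\theta$ and $r\theta-2$, but since the coefficient cancels in the limit $2{\widetilde\nu}\to 4-d$, your final exponents \eqref{soho4} are unaffected.
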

%
%
%
%
%
%
%
%
%
%
\begin{proof}
Let ${\widehat e}:={\widehat u}_h-{\widehat u}$ and $t\in(0,T]$.
Then, \eqref{Duhamel_uhat}, \eqref{semi_green}
and \eqref{HatUform} yield
\begin{equation*}
{\widehat e}(t,x)=\int_0^t\!\!\int_{\ssy D}
\big[\,G_h(t-s;x,y)-G(t-s;x,y)\,\big]\,{\widehat W}(s,y)\,dsdy
\quad\forall\,x\in\overline{D},
\quad\text{\rm a.s.}.
\end{equation*}
Thus, using the It{\^o} isometry
property of the stochastic integral and the Cauchy-Schwarz
inequality, we have
\begin{equation*}
\begin{split}
{\mathbb E}\left[\|e(t,\cdot)\|^2_{\ssy 0,D}\right] &={\mathbb
E}\Bigg[\int_{\ssy D}\left( \int_0^{\ssy T}\int_{\ssy D}{\mathcal
X}_{(0,t)}(s) \,\big[G_h(t-s;x,y)-G(t-s;x,y)\big]
\,{\widehat W}(s,y)\,dsdy\right)^2\,dx\Bigg]\\
&=\tfrac{1}{\Delta{t}\,(\Delta{x})^d} \int_{\ssy D}
\sum_{n\in{\mathcal N}_{\star}}\sum_{\mu\in{\mathcal
J}_{\star}^d}\left(\int_{\ssy S_{n,\mu}}{\mathcal X}_{(0,t)}(s')\,
\big[G_h(t-s';x,y')-G(t-s';x,y')\big]
\,ds'dy'\right)^2\,dx\\
&\leq\int_0^t \left(\int_{\ssy D}\!\int_{\ssy D}
\big[G_h(t-s;x,y)-G(t-s;x,y)\big]^2\,dydx\right)\,ds\\
\end{split}
\end{equation*}
which, along with \eqref{HSxar}, yields
\begin{equation}\label{soho1}
{\mathbb E}\left[\|e(t,\cdot)\|^2_{\ssy 0,D}\right] \leq\int_0^t
\big\|\,{\mathcal S}(s) -{\mathcal S}_h(s)\,\big\|^2_{\ssy\rm
HS}\,ds.
\end{equation}
Since $e(0,\cdot)=0$, we use \eqref{soho1},
the definition of the
Hilbert-Schmidt norm
and \eqref{karx101}, to obtain
\begin{equation}\label{soho2}
\begin{split}
\max_{[0,T]}{\mathbb E}\left[\|e\|^2_{\ssy 0,D}\right]
&\leq\,\int_0^{\ssy T}\Bigg(\,\sum_{\alpha\in\nset^d}\|\,{\mathcal
S}(s)\varepsilon_{\alpha}-{\mathcal S}_h(s)
\varepsilon_{\alpha}\|^2_{\ssy 0,D}\,\Bigg)\,ds\\
&\leq\,\sum_{\alpha\in\nset^d}\,\left(\int_0^{\ssy T}
\big\|{\mathcal S}(s)\varepsilon_k
-{\mathcal S}_h(s)\varepsilon_k\big\|^2_{\ssy 0,D}\,ds\,\right)\\
&\leq\,C\,h^{2{\widetilde\nu}(r,\theta)} \,\sum_{\alpha\in\nset^d}
\|\varepsilon_{\alpha}\|^2_{\ssy {\bfdot H}^{{\widetilde\xi}(r,\theta)}}\\
&\leq\,C\,h^{2{\widetilde\nu}(r,\theta)}\,\pi^{2{\widetilde\xi}(r,\theta)}
\,\sum_{\alpha\in\nset^d}|\alpha|_{\ssy \nset^d}^{2{\widetilde\xi}(r,\theta)}.\\
\end{split}
\end{equation}
The series in the right hand side of \eqref{soho2} converges if
and only if $-2{\widetilde\xi}(r,\theta)>d$. Thus, in view of
\eqref{ASYM_1}, we arrive at \eqref{soho3} and \eqref{soho4}.
\end{proof}
%
%
%
%
%
\section{Convergence of the Fully-Discrete Approximations}\label{SECTION44}
%
%
\subsection{Consistency estimates}\label{SECTION33}
%
First, we derive some H{\"o}lder-type bounds for ${\widehat u}$.
%
%
%
%
\begin{lemma}\label{HolderT}
Let ${\widehat u}$ be the solution of \eqref{AC2}.
Then, there exist a real positive constant $C$, which is
independent of $T$, $\Delta{t}$ and $\Delta{x}$, such that
\begin{equation}\label{IndianOceanII}
\left\{{\mathbb E}\left[ \left\|\int_{\tau_a}^{\tau_b}
\left[{\widehat u}(\tau_b,\cdot) -{\widehat
u}(\tau,\cdot)\right]\,d\tau\right\|_{\ssy 0,D}^2
\right]\right\}^{\half} \leq\,C
\,\,\,(p_d((\tau_b-\tau_a)^{\frac{1}{4}}))^{\half}
\,\,\,|\tau_b-\tau_a|^{1+\frac{4-d}{8}}
\end{equation}
and
\begin{equation}\label{IndianOcean}
\left\{{\mathbb E}\left[ \|{\widehat u}(\tau_b,\cdot) -{\widehat
u}(\tau_a,\cdot)\|_{\ssy 0,D}^2\right]\right\}^{\half} \leq\,C
\,\,\,(p_d((\tau_b-\tau_a)^{\frac{1}{4}}))^{\half}
\,\,\,|\tau_b-\tau_a|^{\frac{4-d}{8}}
\end{equation}
for $\tau_a$, $\tau_b\in[0,T]$ with $\tau_a\leq\tau_b$.
\end{lemma}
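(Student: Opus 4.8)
The plan is to use the integral representation \eqref{HatUform} together with the It\^o isometry \eqref{Ito_Isom} and the Hilbert--Schmidt characterization \eqref{HSxar} to reduce each left-hand side to a deterministic series over $\alpha\in\nset^d$ involving the exponentials $e^{-\lambda_\alpha^2 t}$, and then to estimate these series using \lemref{Series_Asym_2}. Since $\widehat u$ is built from $\widehat W$ rather than from $dW$ directly, I first observe (as in the proof of \thmref{BIG_Qewrhma1}) that the inner integration against $\widehat W$ can, via Cauchy--Schwarz in the cell $S_{n,\mu}$, be bounded by an integration against $dW$; concretely, for any kernel $K(s,y)$ one has
\begin{equation*}
{\mathbb E}\Bigg[\Big\|\int_0^{\tau_b}\!\!\int_{\ssy D}K(s,y)\,\widehat W(s,y)\,dsdy\Big\|_{\ssy 0,D}^2\Bigg]
\leq\int_0^{\tau_b}\!\!\int_{\ssy D}\Big(\int_{\ssy D}K(s,y)\,dy\Big)^2 dsdy\cdot(\text{const}),
\end{equation*}
which after inserting $K(s,y)=\int_{\tau_a}^{\tau_b}{\mathcal X}_{(0,\tau)}(s)\,G(\tau-s;x,y)\,d\tau$ (for \eqref{IndianOceanII}) or $K(s,y)={\mathcal X}_{(0,\tau_b)}(s)\,G(\tau_b-s;x,y)-{\mathcal X}_{(0,\tau_a)}(s)\,G(\tau_a-s;x,y)$ (for \eqref{IndianOcean}) and expanding $G$ by \eqref{GreenKernel}, collapses to a single sum over $\alpha$ because of the $(\cdot,\cdot)_{\ssy 0,D}$-orthonormality of $(\varepsilon_\alpha)$.

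For \eqref{IndianOcean}: after the reduction the $\alpha$-term is controlled by $\big(1-e^{-\lambda_\alpha^2(\tau_b-\tau_a)}\big)^2\int_0^{\tau_a}e^{-2\lambda_\alpha^2(\tau_a-s)}ds$ plus $\int_{\tau_a}^{\tau_b}e^{-2\lambda_\alpha^2(\tau_b-s)}ds$, both of which are bounded by $C\,\lambda_\alpha^{-2}\big(1-e^{-\lambda_\alpha^2(\tau_b-\tau_a)}\big)$; summing over $\alpha$ and invoking \eqref{ASYM_2} with $\delta=\tau_b-\tau_a$ gives exactly the right-hand side of \eqref{IndianOcean}. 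For \eqref{IndianOceanII}: the extra time integration $\int_{\tau_a}^{\tau_b}d\tau$ supplies one more power of $|\tau_b-\tau_a|$, so the $\alpha$-term becomes of the form $|\tau_b-\tau_a|^2\cdot\lambda_\alpha^{-2}\big(1-e^{-\lambda_\alpha^2(\tau_b-\tau_a)}\big)$ after a Cauchy--Schwarz in $\tau$ and the elementary bound $1-e^{-\lambda_\alpha^2 r}\le\min\{1,\lambda_\alpha^2 r\}$; summing and using \eqref{ASYM_2} again produces $|\tau_b-\tau_a|^{2}\cdot p_d(\cdot)\,|\tau_b-\tau_a|^{\frac{4-d}{4}}$, whose square root is the stated $(p_d(\cdot))^{\half}|\tau_b-\tau_a|^{1+\frac{4-d}{8}}$. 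The case $\tau_a=\tau_b$ is trivial since both sides vanish.

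The main obstacle I anticipate is the careful bookkeeping of the ``boundary'' time-cell $T_n$ that straddles $\tau_a$ (respectively $\tau_b$), exactly the issue that forced the splitting \eqref{gaga} into $\Psi_A^\alpha$ and $\Psi_B^\alpha$ in the proof of \thmref{BIG_Qewrhma1}; here, however, the situation is milder because we are comparing values of $\widehat u$ at two times rather than $u$ with $\widehat u$, so the mismatch between $(0,\tau)$ and the cell decomposition only enters through the characteristic functions ${\mathcal X}_{(0,\tau)}$, and can be absorbed by the monotone bound $\int_0^{\tau}e^{-2\lambda_\alpha^2(\tau-s)}ds\le\lambda_\alpha^{-2}$ without a delicate integration-by-parts argument. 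A secondary point requiring attention is making the constant genuinely independent of $T$: this follows because every $s$-integral is taken over an interval of length at most $\tau_b-\tau_a\le T$ with an exponentially decaying integrand, so no factor of $T$ is ever generated. Once these are handled, \eqref{IndianOceanII} and \eqref{IndianOcean} drop out of \eqref{ASYM_2} essentially mechanically.
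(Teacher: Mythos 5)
Your plan follows essentially the same route as the paper's proof: represent the quantities via \eqref{HatUform}, pass from the $\widehat W$-integral to a $dW$-integral using Lemma~\ref{Lhmma1} and the fact that ${\widehat\Pi}$ is an $L^2$-contraction, expand $G$ in the eigenbasis, bound the resulting $\alpha$-terms by $C\,\lambda_\alpha^{-2}\bigl(1-e^{-\lambda_\alpha^2(\tau_b-\tau_a)}\bigr)$ (with the extra $(\tau_b-\tau_a)^2$ from Cauchy--Schwarz in $\tau$ for \eqref{IndianOceanII}), and conclude with \eqref{ASYM_2}; the estimates all check out. The only blemish is your displayed inequality, whose right-hand side should read $\int_0^{\tau_b}\!\int_{\ssy D}K(s,y)^2\,ds\,dy$ rather than $\int_0^{\tau_b}\!\int_{\ssy D}\bigl(\int_{\ssy D}K(s,y)\,dy\bigr)^2\,ds\,dy$ --- clearly a typo, since your subsequent computations use the correct contraction bound.
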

%
%
%
%
%
%
\begin{proof}
We will omit the proof of \eqref{IndianOcean} because it is
similar to that of \eqref{IndianOceanII} which follows.
\par
Let $m\in\{1,\dots,M\}$, $\tau_b\in(0,T]$ and $\tau_a\in[0,T]$
with $\tau_a<\tau_b$, and $\mu(\cdot):=\int_{\tau_a}^{\tau_b}
\left[{\widehat u}(\tau_b,\cdot) -{\widehat
u}(\tau,\cdot)\right]\,d\tau$. First we assume that $\tau_a>0$.
Then, we use \eqref{HatUform}, \eqref{WNEQ2}, the It{\^o}-isometry
property of the stochastic integral, \eqref{GreenKernel} and the
$L^2(D)-$orthogonality of
$(\varepsilon_{\alpha})_{\alpha\in\nset^d}$, to obtain
\begin{equation*}
\begin{split}
{\mathbb E}\left[\|\mu\|_{\ssy 0,D}^2\right]
&=\tfrac{1}{\Delta{t}(\Delta{x})^d}\, \int_{\ssy D}
\Bigg\{\sum_{n\in{\mathcal N}_{\star}}\sum_{\ell\in{\mathcal
J}_{\star}^d}\Big[\,\int_{\ssy S_{n,\ell}}\int_{\tau_a}^{\tau_b}
\big[{\mathcal X}_{(0,\tau_b)}(s')\,G(\tau_b-s';x,y')\\
&\hskip5.5truecm -{\mathcal X}_{(0,\tau)}(s')
\,G(\tau-s';x,y')\big]\,d\tau ds'dy'
\,\Big]^2\Bigg\}\,dx\\
&=\,\tfrac{1}{\Delta{t}(\Delta{x})^d} \sum_{n\in{\mathcal
N}_{\star}} \sum_{\ell\in{\mathcal
J}_{\star}^d}\Bigg\{\,\sum_{\alpha\in\nset^d} \Big(\int_{\ssy
T_n}\int_{\tau_a}^{\tau_b} \big[ {\mathcal X}_{(0,\tau_b)}(s')
\,e^{-\lambda_{\alpha}^2(\tau_b-s')}\\
&\hskip4.5truecm -{\mathcal
X}_{(0,\tau)}(s')\,e^{-\lambda_{\alpha}^2(\tau-s')}\big] \,d\tau
ds'\Big)^2\,\Big(\int_{\ssy D_{\ell}}\varepsilon_{\alpha}(y')
\,dy'\,\Big)^2\,\Bigg\}\\
\end{split}
\end{equation*}
which, along with the use of the Cauchy-Schwarz inequality, yields
\begin{equation}\label{mundo_japan}
\begin{split}
{\mathbb E}\left[\|\mu\|_{\ssy 0,D}^2\right]
&\leq\,\sum_{\alpha\in\nset^d}\Bigg\{\,\int_0^{\ssy T}
\left[\int_{\tau_a}^{\tau_b}\left[ {\mathcal
X}_{(0,\tau_b)}(s')\,e^{-\lambda_{\alpha}^2(\tau_b-s')} -{\mathcal
X}_{(0,\tau)}(s')\,e^{-\lambda_{\alpha}^2(\tau-s')}\right]\,d\tau
\right]^2
\,ds'\Bigg\}\\
%
%
&\leq\,(\tau_b-\tau_a)\, \sum_{\alpha\in\nset^d}\left(
\int_{\tau_a}^{\tau_b}\!\!
\int_0^{\tau}\,\left[e^{-\lambda_{\alpha}^2(\tau_b-s')}
-e^{-\lambda_{\alpha}^2(\tau-s')}\right]^2\,ds'
d\tau\right.\\
&\hskip5.5truecm\left.+\int_{\tau_a}^{\tau_b}\!\!
\int_{\tau}^{\tau_b}
e^{-2\lambda_{\alpha}^2(\tau_b-s')} \,ds' d\tau\right)\\
&\leq \,(\tau_b-\tau_a)^2\ken\sum_{\alpha\in\nset^d}
\tfrac{1-e^{-2\,\lambda_{\alpha}^2\,(\tau_b-\tau_a)}}{\lambda_{\alpha}^2}.
\end{split}
\end{equation}
Finally, we combine \eqref{mundo_japan} and \eqref{ASYM_2} to
arrive at \eqref{IndianOceanII}. The case $\tau_a=0$ follows by
moving along the lines of the proof above using that ${\widehat
u}(0,x)=0$.
\end{proof}
%

\par
Next, we show a consistency result for the Backward Euler
time-discrete approximations of ${\widehat u}$, which is based on
the result of Lemma~\ref{HolderT}.
%
%
\begin{proposition}\label{Synepes1}
Let ${\widehat u}$ be the solution of \eqref{AC2} and $({\widehat
\sigma}_m)_{m=1}^{\ssy M}$ be stochastic functions defined by
\begin{equation}\label{Ydaspis200}
{\widehat u}(\tau_m,\cdot)-{\widehat u}(\tau_{m-1},\cdot)
+k_m\,\Delta^2{\widehat u}(\tau_m,\cdot)
=\int_{\ssy\Delta_m}{\widehat W}\,d\tau +{\widehat
\sigma}_m\quad\text{\rm a.s.}, \quad m=1,\dots,M.
\end{equation}
Then it holds that
\begin{equation}\label{Ydaspis201}
\left(\,{\mathbb E}\left[ \|T_{\ssy B}{\widehat \sigma}_m\|_{\ssy
0,D}^2 \right]\,\right)^{\half} \leq\,C\,\,
\big(p_d(k_m^{\frac{1}{4}})\big)^{\half}\,\,
\,(k_m)^{1+\frac{4-d}{8}},\quad m=1,\dots,M.
\end{equation}
\end{proposition}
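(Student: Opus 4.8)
The plan is to read off an explicit formula for $\widehat\sigma_m$ from the equation satisfied by $\widehat u$, apply the biharmonic solution operator $T_{\ssy B}$ to it, and then recognize the result as precisely the quantity already estimated in Lemma~\ref{HolderT}. First I would use that $\widehat u$, given by \eqref{HatUform}, is a (strong) solution of $\partial_t\widehat u+\Delta^2\widehat u=\widehat W$; integrating this over $\Delta_m=(\tau_{m-1},\tau_m)$ gives
\[
\widehat u(\tau_m,\cdot)-\widehat u(\tau_{m-1},\cdot)+\int_{\ssy\Delta_m}\Delta^2\widehat u(\tau,\cdot)\,d\tau=\int_{\ssy\Delta_m}\widehat W\,d\tau\qquad\text{a.s.}
\]
Subtracting this identity from the defining relation \eqref{Ydaspis200} and using $k_m\,\Delta^2\widehat u(\tau_m,\cdot)=\int_{\ssy\Delta_m}\Delta^2\widehat u(\tau_m,\cdot)\,d\tau$ yields
\[
\widehat\sigma_m=\Delta^2\!\int_{\ssy\Delta_m}\bigl[\widehat u(\tau_m,\cdot)-\widehat u(\tau,\cdot)\bigr]\,d\tau\qquad\text{a.s.},\quad m=1,\dots,M.
\]

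Next I would apply $T_{\ssy B}$. Since $\widehat u(t,\cdot)\in{\bfdot H}^4(D)$ for every $t\in(0,T]$ --- this follows from the representation \eqref{HatUform} together with the smoothing of the parabolic semigroup acting on $\widehat W(s,\cdot)\in L^2(D)$ --- the differences $\widehat u(\tau_m,\cdot)-\widehat u(\tau,\cdot)$ belong to ${\bfdot H}^4(D)$, so that $T_{\ssy B}\Delta^2$ acts on them as the identity (cf. \eqref{dot_charact} and \eqref{ElOp2}, by uniqueness of the biharmonic problem). Moving the bounded linear operator $T_{\ssy B}$ inside the integral over $\Delta_m$ then gives
\[
T_{\ssy B}\widehat\sigma_m=\int_{\ssy\Delta_m}\bigl[\widehat u(\tau_m,\cdot)-\widehat u(\tau,\cdot)\bigr]\,d\tau=\int_{\tau_{m-1}}^{\tau_m}\bigl[\widehat u(\tau_m,\cdot)-\widehat u(\tau,\cdot)\bigr]\,d\tau\qquad\text{a.s.}
\]

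Finally, taking $\|\cdot\|_{\ssy 0,D}$, then expectations, and invoking \eqref{IndianOceanII} of Lemma~\ref{HolderT} with $\tau_a=\tau_{m-1}$ and $\tau_b=\tau_m$ (so that $\tau_b-\tau_a=k_m$), I would obtain
\[
\bigl({\mathbb E}\bigl[\|T_{\ssy B}\widehat\sigma_m\|_{\ssy 0,D}^2\bigr]\bigr)^{\frac12}\le\,C\,\bigl(p_d(k_m^{\frac14})\bigr)^{\frac12}\,k_m^{1+\frac{4-d}{8}},
\]
which is exactly \eqref{Ydaspis201}. There is no genuinely hard step here: the only points needing a little care are the justification that $\widehat u(t,\cdot)$ has the spatial regularity and homogeneous boundary behaviour making $T_{\ssy B}\Delta^2$ the identity on it, and the interchange of $T_{\ssy B}$ with the time integral over $\Delta_m$ --- both routine consequences of \eqref{HatUform} and standard parabolic regularity --- since the substance of the bound, namely the H\"older-type estimate for $\int_{\tau_{m-1}}^{\tau_m}[\widehat u(\tau_m,\cdot)-\widehat u(\tau,\cdot)]\,d\tau$, has already been carried out in Lemma~\ref{HolderT}.
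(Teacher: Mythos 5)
Your proposal is correct and follows essentially the same route as the paper: integrate the equation in \eqref{AC2} over $\Delta_m$, subtract from \eqref{Ydaspis200} to identify $T_{\ssy B}{\widehat\sigma}_m=\int_{\ssy\Delta_m}\bigl[{\widehat u}(\tau_m,\cdot)-{\widehat u}(\tau,\cdot)\bigr]\,d\tau$, and then invoke \eqref{IndianOceanII} with $\tau_b-\tau_a=k_m$. The only difference is that you spell out the regularity and boundary-condition justifications for $T_{\ssy B}\Delta^2$ acting as the identity, which the paper leaves implicit.
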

%
%
%
\begin{proof}
Let $m\in\{1,\dots,M\}$. Integrating the equation in \eqref{AC2}
over $\Delta_m$ and subtracting it from \eqref{Ydaspis200}, we
conclude that
$T_{\ssy B}{\widehat\sigma}_m(\cdot)=\int_{\ssy\Delta_m}\left[\,
{\widehat u}(\tau_m\cdot)-{\widehat
u}(\tau,\cdot)\,\right]\,d\tau$ a.s..
Thus, to get the bound \eqref{Ydaspis201}, we apply the result
\eqref{IndianOceanII} on the latter equality.
\end{proof}
%
%
%
%
\subsection{Discrete in time
$L^2_t(L^2_{\ssy P}(L^2_x))$ error estimate}
We first obtain a discrete in time $L^2_t(L^2_{\ssy P}(L^2_x))$
error estimate for the Backward Euler fully-discrete
approximations of ${\widehat u}$, by connecting it to the error
estimate of Theorem~\ref{coconut12} for the space-discrete
approximation of ${\widehat u}$.
%
%
%
%
\begin{theorem}\label{Vivo}
Let $r\in\{2,3,4\}$, ${\widehat u}$ be the solution of \eqref{AC2}
and $({\widehat U}_h^m)_{m=0}^{\ssy M}\subset M_h$ be the Backward
Euler fully-discrete approximations of ${\widehat u}$ defined in
\eqref{FullDE1}-\eqref{FullDE2}. Then there exists a constant
$C>0$, independent of $T$, $r$, $\Delta{t}$, $\Delta{x}$, $h$, $M$
and $(k_m)_{m=1}^{\ssy M}$, such that:
\begin{equation}\label{coup1}
\left\{\sum_{m=1}^{\ssy M}k_m\, {\mathbb E}\Big[\|{\widehat U}_h^m
-{\widehat u}(\tau_m,\cdot)\|_{\ssy 0,D}^2\Big]
\right\}^{\frac{1}{2}}
\leq\,C\,\sqrt{T}\,\Big[\,\epsilon^{-\frac{1}{2}}
\,\,\,h^{\nu(r,d)-\epsilon} +{\widehat\omega}(k_{\ssy\rm max})
\,\,\,(k_{\ssy\rm max})^{\frac{4-d}{8}}\,\Big]
\end{equation}
for $\epsilon\in(0,\nu(r,d)]$, where ${\widehat\omega}(k_{\ssy\rm
max}):= (p_d((k_{\ssy\rm max})^{\frac{1}{4}}))^{\half}$ and
$\nu(r,d)$ is defined in \eqref{soho4}.
\end{theorem}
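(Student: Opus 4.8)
The plan is to compare the fully-discrete approximation ${\widehat U}_h^m$ with the exact solution ${\widehat u}(\tau_m,\cdot)$ of \eqref{AC2} through the space-discrete approximation ${\widehat u}_h$ of \eqref{Semi}, writing ${\widehat U}_h^m-{\widehat u}(\tau_m,\cdot)=\big({\widehat U}_h^m-{\widehat u}_h(\tau_m,\cdot)\big)+\big({\widehat u}_h(\tau_m,\cdot)-{\widehat u}(\tau_m,\cdot)\big)$. Using $(a+b)^2\le 2a^2+2b^2$ and $\sum_{m=1}^{\ssy M}k_m=T$, the left-hand side of \eqref{coup1} is bounded by $\sqrt2$ times the square root of $\sum_{m=1}^{\ssy M}k_m\,{\mathbb E}[\|{\widehat U}_h^m-{\widehat u}_h(\tau_m,\cdot)\|_{\ssy 0,D}^2]$, plus $\sqrt{2T}$ times $\{\max_{[0,T]}{\mathbb E}[\|{\widehat u}_h-{\widehat u}\|_{\ssy 0,D}^2]\}^{\half}$; by Theorem~\ref{coconut12} the latter piece contributes precisely the term $C\sqrt T\,\epsilon^{-\frac12}h^{\nu(r,d)-\epsilon}$.

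It remains to estimate the first sum, which carries only the time-discretization error at the semidiscrete level. First I would set $\theta^m:={\widehat U}_h^m-{\widehat u}_h(\tau_m,\cdot)\in M_h$; integrating the equation in \eqref{Semi} over $\Delta_m$ and subtracting \eqref{FullDE2} gives the perturbed Backward Euler recursion $\theta^m-\theta^{m-1}+k_m B_h\theta^m=B_h\xi^m$ with $\theta^0=0$, where $\xi^m:=\int_{\ssy\Delta_m}\big[{\widehat u}_h(\tau_m,\cdot)-{\widehat u}_h(s,\cdot)\big]\,ds\in M_h$. Then I would test with $T_{\ssy B,h}\theta^m$: since $T_{\ssy B,h}$ is selfadjoint and positive definite on $M_h$ (by \eqref{adjo_Bh} and positivity of $B_h$ on $M_h$) and satisfies $(B_h\varphi,T_{\ssy B,h}\chi)_{\ssy 0,D}=(\varphi,\chi)_{\ssy 0,D}$ for all $\varphi,\chi\in M_h$, a standard telescoping argument combined with the Cauchy--Schwarz and Young inequalities yields, a.s., $\sum_{m=1}^{\ssy M}k_m\,\|\theta^m\|_{\ssy 0,D}^2\le\sum_{m=1}^{\ssy M}k_m^{-1}\,\|\xi^m\|_{\ssy 0,D}^2$. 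Taking expectations, the first sum is thereby reduced to controlling $\sum_{m=1}^{\ssy M}k_m^{-1}\,{\mathbb E}[\|\xi^m\|_{\ssy 0,D}^2]$.

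The remaining ingredient is an $h$-uniform Hölder-in-time bound for ${\widehat u}_h$, namely ${\mathbb E}[\|\xi^m\|_{\ssy 0,D}^2]\le C\,p_d(k_m^{\frac14})\,k_m^{2+\frac{4-d}{4}}$, which is the exact analogue of \eqref{IndianOceanII} with ${\widehat u}_h$ replacing ${\widehat u}$. By \eqref{Duhamel_uhat}, \eqref{semi_green} and the selfadjointness of $P_h$, the function ${\widehat u}_h$ has the representation \eqref{HatUform} with $G_h$ in place of $G$, so the proof of Lemma~\ref{HolderT} applies essentially verbatim — using Lemma~\ref{Lhmma1}, the It{\^o} isometry, and the $L^2(D)$-orthonormal eigenpairs $(\chi_\ell,\lambda_{h,\ell})$ of $B_h$ from Lemma~\ref{prasino} in place of $(\varepsilon_\alpha,\lambda_\alpha^2)$ — and produces a bound of the form $k_m^2\sum_\ell\lambda_{h,\ell}^{-1}\big(1-e^{-2\lambda_{h,\ell}k_m}\big)$. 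To conclude one removes the discrete spectrum: since $M_h\subset{\bfdot H}^2(D)$, the min--max principle gives $\lambda_{h,\ell}\ge\lambda_{\alpha(\ell)}^2$, the $\ell$-th eigenvalue of the biharmonic operator, and because $\lambda\mapsto\lambda^{-1}(1-e^{-2\lambda\delta})$ is positive and nonincreasing, the sum over $\ell$ is dominated by $\sum_{\alpha\in\nset^d}\lambda_\alpha^{-2}\big(1-e^{-2\lambda_\alpha^2 k_m}\big)$, to which Lemma~\ref{Series_Asym_2} applies. Hence $k_m^{-1}{\mathbb E}[\|\xi^m\|_{\ssy 0,D}^2]\le C\,p_d(k_m^{\frac14})\,k_m^{1+\frac{4-d}{4}}\le C\,p_d((k_{\ssy\rm max})^{\frac14})\,(k_{\ssy\rm max})^{\frac{4-d}{4}}$; multiplying by $k_m$, summing (again $\sum_m k_m=T$) and taking the square root gives the term $C\sqrt T\,{\widehat\omega}(k_{\ssy\rm max})\,(k_{\ssy\rm max})^{\frac{4-d}{8}}$, and adding the two contributions yields \eqref{coup1}.

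I expect the principal difficulty to lie in the $h$-uniform Hölder estimate for ${\widehat u}_h$ — specifically, in bounding $\sum_\ell\lambda_{h,\ell}^{-1}(1-e^{-2\lambda_{h,\ell}\delta})$ independently of $h$, which relies on the spectral comparison $\lambda_{h,\ell}\ge\lambda_{\alpha(\ell)}^2$ to hand the estimate over to Lemma~\ref{Series_Asym_2}; by contrast, the energy estimate for $\theta^m$ and the reduction of the spatial part via Theorem~\ref{coconut12} are routine.
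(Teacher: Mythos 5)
Your proposal is correct, and it follows the paper's skeleton for the first two stages (the splitting through ${\widehat u}_h$, the bound on the spatial part via Theorem~\ref{coconut12}, the perturbed Backward Euler recursion for $z_h^m={\widehat U}_h^m-{\widehat u}_h(\tau_m,\cdot)$ and the energy/telescoping estimate $\sum_m k_m\,{\mathbb E}[\|z_h^m\|_{\ssy 0,D}^2]\le\sum_m k_m^{-1}\,{\mathbb E}[\|\rho_{h,m}\|_{\ssy 0,D}^2]$, which is exactly \eqref{Greece_Turk}). Where you genuinely diverge is in how the temporal consistency term is controlled. The paper does \emph{not} prove a H{\"o}lder estimate for ${\widehat u}_h$ itself: it writes ${\widehat u}_h(\tau)-{\widehat u}_h(\tau_m)=-[{\widehat e}(\tau)-{\widehat e}(\tau_m)]+[{\widehat u}(\tau)-{\widehat u}(\tau_m)]$ with ${\widehat e}={\widehat u}-{\widehat u}_h$, bounds the first piece crudely by $2\max_{[0,T]}\{{\mathbb E}[\|{\widehat e}\|^2_{\ssy 0,D}]\}^{1/2}$ using Theorem~\ref{coconut12} again, and applies the already-proved estimate \eqref{IndianOcean} to the second. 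You instead prove the $h$-uniform H{\"o}lder bound for ${\widehat u}_h$ directly, by repeating the Lemma~\ref{HolderT} computation with the discrete eigenpairs $(\chi_\ell,\lambda_{h,\ell})$ and then handing the resulting series $\sum_\ell\lambda_{h,\ell}^{-1}(1-e^{-2\lambda_{h,\ell}k_m})$ over to Lemma~\ref{Series_Asym_2} via the min--max comparison $\lambda_{h,\ell}\ge\lambda_{(\ell)}$ (valid since $M_h\subset H^2(D)\cap H_0^1(D)$ and the Rayleigh quotients coincide) together with the monotonicity of $\lambda\mapsto\lambda^{-1}(1-e^{-2\lambda\delta})$ (which holds, since $u\le e^u-1$). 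Both routes are sound; the paper's is shorter because it recycles Lemma~\ref{HolderT} and Theorem~\ref{coconut12} verbatim, while yours costs an extra spectral lemma but cleanly decouples the time error from $h$ (your consistency term carries no $h^{\nu-\epsilon}$ contribution at all). One small bookkeeping slip: your displayed chain $k_m^{-1}{\mathbb E}[\|\xi^m\|^2_{\ssy 0,D}]\le C\,p_d(k_m^{1/4})\,k_m^{1+\frac{4-d}{4}}\le C\,p_d((k_{\ssy\rm max})^{1/4})\,(k_{\ssy\rm max})^{\frac{4-d}{4}}$ silently discards a factor $k_m$ (invalid unless $k_m\le1$); you should instead keep $k_m^{1+\frac{4-d}{4}}=k_m\,k_m^{\frac{4-d}{4}}\le k_m\,(k_{\ssy\rm max})^{\frac{4-d}{4}}$ and then sum, using $\sum_m k_m=T$ — which is evidently what you intend, and it yields exactly the stated term.
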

%
%
%
%
%
%
%
%
%
%
%
\begin{proof}
Let ${\widehat u}_h$ be the space-discrete approximation of
${\widehat u}$ defined in \eqref{Semi}, ${\widehat e}={\widehat
u}-{\widehat u}_h$, $z_h^m:={\widehat U}_h^m-{\widehat
u}_h(\tau_m)\in S_h^r$ for $m=0,\dots,M$, and
$V_h:=\left\{\sum_{m=1}^{\ssy M}k_m {\mathbb
E}\left[\|z_h^m\|_{\ssy 0,D}^2\right]\right\}^{\half}$. First, we
observe that
\begin{equation}\label{Greece_China}
\left\{\sum_{m=1}^{\ssy M}k_m\, {\mathbb E}\left[ \|{\widehat
U}_h^m-{\widehat u}(\tau_m,\cdot)\|_{\ssy 0,D}^2
\right]\right\}^{\half} \leq\,V_h+\sqrt{T}\,\,\max_{[0,T]}\left\{
{\mathbb E}\left[ \|{\widehat e}\|_{\ssy 0,D}^2
\right]\right\}^{\half}.
\end{equation}
Integrating \eqref{Semi} over $\Delta_m$ and subtracting the
obtained relation from \eqref{FullDE2}, we arrive at
\begin{equation}\label{ErrorEqIndos1}
T_{\ssy B,h}(z_h^m-z_h^{m-1})+k_m\,z_h^m=\rho_{h,m}
\quad\text{\rm a.s.},\quad m=1,\dots,M,
\end{equation}
where
$\rho_{h,m}:=\int_{\ssy\Delta_m} \big[\,{\widehat u}_h(\tau,\cdot)
-{\widehat u}_h(\tau_m,\cdot)\,\big]\,d\tau$.
Take the $(\cdot,\cdot)_{\ssy 0,D}-$inner product of both sides of
\eqref{ErrorEqIndos1} with $z_h^m$, sum with respect to $m$ from
$1$ up to $M$, to obtain
\begin{equation}\label{ErrorEqIndos2}
\sum_{m=1}^{\ssy M}(\Delta T_{\ssy B,h}z_h^m-\Delta T_{\ssy
B,h}z_h^{m-1}, \Delta T_{\ssy B,h}z_h^m)_{\ssy 0,D}
+\sum_{m=1}^{\ssy M}k_m\|z_h^m\|_{\ssy 0,D}^2 =\sum_{m=1}^{\ssy M}
(\rho_{h,m},z_h^m)_{\ssy 0,D} \quad\text{\rm a.s.}.
\end{equation}
Since $z_h^0=0$, we conclude that
$\sum_{m=1}^{\ssy M}(\Delta T_{\ssy B,h}z_h^m-\Delta T_{\ssy
B,h}z_h^{m-1}, \Delta T_{\ssy B,h}z_h^m)_{\ssy
0,D}\ge\tfrac{1}{2}\, \|\Delta T_{\ssy B,h}z_h^{\ssy M}\|^2_{\ssy
0,D}$ a.s..
Thus, taking expected values in \eqref{ErrorEqIndos2}
and using the Cauchy-Schwarz inequality we get
\begin{equation}\label{Greece_Turk}
\begin{split}
(V_h)^2 \leq&\,{\mathbb E}\left[ \sum_{m=1}^{\ssy M}k_m^{-1}
\,\|\rho_{h,m}\|_{\ssy 0,D}^2 \right]\\
&\leq\,{\mathbb E} \left[\sum_{m=1}^{\ssy M} \,\int_{\ssy
D}\int_{\ssy \Delta_m} \big[{\widehat u}_h(\tau,x)
-{\widehat u}_h(\tau_m,x)\big]^2\,d\tau\,dx\right]\\
&\leq\,\sum_{m=1}^{\ssy M}
\int_{\ssy\Delta_m}
{\mathbb E}\left[\|{\widehat u}_h(\tau,\cdot)
-{\widehat u}_h(\tau_m,\cdot)\|_{\ssy 0,D}^2\right]\,d\tau.\\
\end{split}
\end{equation}
Using \eqref{Greece_Turk} and \eqref{IndianOcean}, we conclude
that
\begin{equation}\label{Nikos_dinner}
\begin{split}
V_h\leq&\left\{\sum_{m=1}^{\ssy M} \int_{\ssy\Delta_m} {\mathbb
E}\left[ \|{\widehat e}(\tau,\cdot) -{\widehat
e}(\tau_m,\cdot)\|_{\ssy 0,D}^2\right]\,d\tau
\right\}^{\frac{1}{2}} +\left\{\sum_{m=1}^{\ssy M}
\int_{\ssy\Delta_m} {\mathbb E}\left[ \|{\widehat u}(\tau,\cdot)
-{\widehat u}(\tau_m,\cdot)\|_{\ssy 0,D}^2\right]\,d\tau
\right\}^{\frac{1}{2}}\\
\leq&\,C\,\sqrt{T}\,\left[\,\max_{[0,T]}\left(\,{\mathbb E}\left[
\|\,{\widehat e}\,\|_{\ssy 0,D}^2\right]\,\right)^{\half}
+(p_d((k_{\ssy\rm max})^{\frac{1}{4}}))^{\frac{1}{2}}
\,\,(k_{\ssy\rm
max})^{\frac{4-d}{8}}\,\right].\\
\end{split}
\end{equation}
Thus, \eqref{coup1} follows from \eqref{Greece_China},
\eqref{Nikos_dinner} and \eqref{soho3}.
\end{proof}
%
%
%
%
%
%
%
%
\subsection{Discrete in time $L^{\infty}_t(L^2_{\ssy P}(L^2_x))$
error estimate}
To get a discrete in time $L^{\infty}_t(L^2_{\ssy P}(L^2_x))$ error
estimate for the Backward Euler fully-discrete approximations of
${\widehat u}$, we compare them to the Backward Euler
time-discrete approximations of ${\widehat u}$ defined in
\eqref{BackE1}--\eqref{BackE2}.
\par
For that we derive first a discrete in time $L^2_t(L^2_x)$ error
estimate between the Backward Euler time-discrete and the Backward
Euler fully discrete approximations of the solution $w$ of
\eqref{Det_Parab} given below:
First set
\begin{equation}\label{DetFD1}
W_h^0:=P_hw_0.
\end{equation}
Then, for $m=1,\dots,M$, find $W_h^m\in M_h$ such that
\begin{equation}\label{DetFD2}
W_h^m-W_h^{m-1} +k_m\,B_hW_h^m =0.
\end{equation}
%
%
%
%
%
\begin{proposition}\label{Aygo_Kokora}
Let $r\in\{2,3,4\}$, $w$ be the solution of the problem
\eqref{Det_Parab}, $(W^m)_{m=0}^{\ssy M}$ be the Backward Euler
time-discrete approxi\-mations of $w$ defined in
\eqref{BEDet1}-\eqref{BEDet2}, and $(W_h^m)_{m=0}^{\ssy M}$ be the
Backward Euler fully-discrete approximations of $w$ specified in
\eqref{DetFD1}-\eqref{DetFD2}. If $w_0\in {\bfdot H}^3(D)$, then,
there exists a constant $C>0$, independent of $T$, $h$, $M$ and
$(k_m)_{m=1}^{\ssy M}$, such that
\begin{equation}\label{tiger_river1}
\left(\,\sum_{m=1}^{\ssy M}k_m\,\|W^m-W_h^m\|^2_{\ssy 0,D}
\,\right)^{\frac{1}{2}}\leq \,C\,\,h^{{\widetilde\nu}(r,\theta)}
\,\,\|w_0\|_{\ssy {\bfdot H}^{{\widetilde\xi}(r,\theta)}}
\quad\forall\,\theta\in[0,1],
\end{equation}
where ${\widetilde\nu}(r,\theta)$ and ${\widetilde\xi}(r,\theta)$
are defined in \eqref{Basilico_4}.
\end{proposition}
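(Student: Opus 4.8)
The plan is to mimic the structure of the proofs of Proposition~\ref{DetPropo1} and Proposition~\ref{LowRegSD}: derive the estimate first for the two endpoint exponents $\theta=1$ and $\theta=0$, and then obtain the full range $\theta\in[0,1]$ by interpolation, using the fact that the quantities $\|w_0\|_{\ssy\bfdot H^s}$ interpolate and that $w_0\mapsto (W^m-W_h^m)_{m=1}^{\ssy M}$ is linear. The natural error quantity to track is $e^m:=W^m-W_h^m\in M_h$ (note both $W^m$ and $W_h^m$ already satisfy the \emph{same} discrete-in-time scheme, one with the exact biharmonic operator $\Delta^2$ acting on $\bfdot H^4(D)$ and one with the discrete operator $B_h$ on $M_h$, so this is purely a spatial-discretization comparison along the time grid). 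Subtracting \eqref{DetFD2} from \eqref{BEDet2} and inserting the elliptic projections gives an error equation of the form $T_{\ssy B,h}(e^m-e^{m-1})+k_m e^m = \eta^m$ a.s., where $\eta^m$ collects the consistency terms built from $(T_{\ssy B,h}-T_{\ssy B})\Delta^2 W^m$ and the fact that $W^0-W_h^0 = w_0-P_hw_0$ is orthogonal to $M_h$; here I would use \eqref{tiger007}, \eqref{TB-prop1}, \eqref{adjo_Bh}, \eqref{fem2} and \eqref{C1_FEM}/\eqref{ARA2} exactly as in the proof of Proposition~\ref{LowRegSD}.

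For the $\theta=1$ case I would take the $(\cdot,\cdot)_{\ssy 0,D}$ inner product of the error equation with $e^m$, use the telescoping/monotonicity identity $\sum_{m=1}^{\ssy M}(\Delta T_{\ssy B,h}e^m-\Delta T_{\ssy B,h}e^{m-1},\Delta T_{\ssy B,h}e^m)_{\ssy 0,D}\ge\tfrac12\|\Delta T_{\ssy B,h}e^{\ssy M}\|_{\ssy 0,D}^2$ already exploited in the proof of Theorem~\ref{Vivo}, and the Cauchy--Schwarz inequality, to bound $\sum_m k_m\|e^m\|_{\ssy 0,D}^2$ by $\sum_m k_m^{-1}\|\eta^m\|_{\ssy 0,D}^2$ (plus the contribution from the nonzero initial discrepancy). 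The consistency term $\eta^m$ is estimated by $\|(T_{\ssy B,h}-T_{\ssy B})\Delta^2 W^m\|_{\ssy 0,D}$, which by \eqref{ARA2} is $\le C h^{r+1}\|W^m\|_{\ssy\bfdot H^{\,r-1}}$ for $r=3,4$ and $\le Ch^2\|W^m\|_{\ssy\bfdot H^1}$ for $r=2$; then a discrete energy estimate for the scheme \eqref{BEDet1}--\eqref{BEDet2} — of the type $\sum_m k_m\|W^m\|_{\ssy\bfdot H^{s}}^2\le C\|w_0\|_{\ssy\bfdot H^{s-2}}^2$, which follows from testing \eqref{BEDet2} with $\Delta^{2\sigma}W^m$ and using $\tfrac{1-e^{-x}}{x}$-type bounds, the discrete analogue of \eqref{Reggo3} — converts this into $C h^{2\widetilde\nu(r,1)}\|w_0\|_{\ssy\bfdot H^{\widetilde\xi(r,1)}}^2$ with $\widetilde\nu(r,1)$, $\widetilde\xi(r,1)$ as in \eqref{Basilico_4}. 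For the $\theta=0$ case I would argue directly: test \eqref{BEDet2} and \eqref{DetFD2} separately with $\Delta^{-2}$-type multipliers as in the proof of Proposition~\ref{LowRegSD} (using \eqref{TB-prop1}, \eqref{adjo_Bh}, \eqref{PanwFragma1}, \eqref{minus_equiv}) to get $\big(\sum_m k_m\|W^m\|_{\ssy 0,D}^2\big)^{1/2}$ and $\big(\sum_m k_m\|W_h^m\|_{\ssy 0,D}^2\big)^{1/2}$ each bounded by $C\|w_0\|_{\ssy\bfdot H^{-2}}$, and conclude $\big(\sum_m k_m\|e^m\|_{\ssy 0,D}^2\big)^{1/2}\le C\|w_0\|_{\ssy\bfdot H^{-2}}=C\|w_0\|_{\ssy\bfdot H^{\widetilde\xi(r,0)}}$, which is \eqref{tiger_river1} at $\theta=0$.

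Finally, interpolating the linear map $w_0\mapsto (W^m-W_h^m)_{m=1}^{\ssy M}$ (with the discrete norm $\big(\sum_m k_m\|\cdot\|_{\ssy 0,D}^2\big)^{1/2}$ on the target) between the endpoints $\theta=0$ and $\theta=1$ gives \eqref{tiger_river1} for all $\theta\in[0,1]$, since $\{\bfdot H^s(D)\}_s$ is an interpolation scale and $\widetilde\nu(r,\theta)$, $\widetilde\xi(r,\theta)$ are the affine interpolants of their endpoint values. The main obstacle I expect is the $\theta=1$ step: one must be careful that the $h$-power coming from \eqref{ARA2} matches the regularity one is \emph{allowed} to demand of $w_0$ (only $\bfdot H^3(D)$), so the discrete regularity-gain estimate $\sum_m k_m\|W^m\|_{\ssy\bfdot H^{r+1}}^2\le C\|w_0\|_{\ssy\bfdot H^{r-1}}^2$ (for $r=3,4$, which needs $r-1\le 3$, i.e. exactly the stated hypothesis saturates at $r=4$) has to be proved with constants uniform in $M$ and in the nonuniform steps $(k_m)$; this is the analogue of the deterministic smoothing estimate \eqref{Reggo3} for the Backward Euler scheme and is the technical heart of the argument, though it is standard and presumably was established for the 1D case in \cite{KZ2008}.
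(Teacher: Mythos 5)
Your proposal follows essentially the same route as the paper's proof: the error equation $T_{\ssy B,h}(E^m-E^{m-1})+k_mE^m=k_m(T_{\ssy B}-T_{\ssy B,h})\Delta^2W^m$ with $T_{\ssy B,h}E^0=0$, the telescoping energy argument of Theorem~\ref{Vivo}, Proposition~\ref{H2CASE} for the consistency term, discrete smoothing bounds of the form $\sum_mk_m\|W^m\|_{\ssy{\bfdot H}^{r+1}}^2\le C\|w_0\|_{\ssy{\bfdot H}^{r-1}}^2$ obtained by testing \eqref{BEDet2} with powers of $\Delta$ (the paper gets these by a one-line summation by parts and positivity, with no need for the $\tfrac{1-e^{-x}}{x}$-type spectral bounds you mention), separate stability bounds for the two schemes at $\theta=0$, and interpolation. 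The only blemish is the mislabelled intermediate norm $\|W^m\|_{\ssy{\bfdot H}^{r-1}}$ in the consistency estimate, which should read $\|W^m\|_{\ssy{\bfdot H}^{r+1}}$ as your closing paragraph correctly states.
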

%
%
%
%
%
%
%
\begin{proof}
Let $E^m:=W^m-W_h^m$ for $m=0,\dots,M$. We will get
\eqref{tiger_river1} by interpolation, showing it for $\theta=0$
and $\theta=1$.
\par
We use \eqref{BEDet2} and \eqref{DetFD2}, to obtain:
$T_{\ssy B,h}(E^m-E^{m-1})+k_m\,E^m =k_m \,(T_{\ssy B}-T_{\ssy
B,h})\Delta^2W^m$ for $m=1,\dots,M$.
Since $T_{\ssy B,h}E^0=0$, proceeding as in the proof of
Theorem~\ref{Vivo}, it follows that
\begin{equation}\label{citah3}
\sum_{m=1}^{\ssy M}k_m\,\|E^m\|_{\ssy 0,D}^2
\leq\,\sum_{m=1}^{\ssy M}k_m\, \left\|(T_{\ssy B}-T_{{\ssy B},h})
\Delta^2W^m\right\|_{\ssy 0,D}^2.
\end{equation}
\par
Let $r=3$. Then, by \eqref{ARA2} and \eqref{citah3}, we obtain
\begin{equation}\label{citah4}
\sum_{m=1}^{\ssy M}k_m\,\|E^m\|_{\ssy 0,D}^2
\leq\,C\,h^{8}\,\sum_{m=1}^{\ssy M}k_m\,
\left\|\Delta^2W^m\right\|_{\ssy 0,D}^2.
\end{equation}
Taking the $(\cdot,\cdot)_{\ssy 0,D}-$inner product of
\eqref{BEDet2} with $\Delta^2W^m$, and then integrating by parts
and summing with respect to $m$ from $1$ up to $M$, it follows
that
\begin{equation}\label{citah5}
\sum_{m=1}^{\ssy M} (\Delta W^m-\Delta W^{m-1},\Delta W^m)_{\ssy
0,D} +\sum_{m=1}^{\ssy M} k_m\,\|\Delta^2W^m\|_{\ssy 0,D}^2=0.
\end{equation}
Since $\sum_{m=1}^{\ssy M}\big(\Delta W^m-\Delta W^{m-1}, \Delta
W^m\big)_{\ssy 0,D}\ge\tfrac{1}{2}\, \big(\,\|\Delta W^{\ssy
M}\|^2_{\ssy 0,D} -\|\Delta W^0\|_{\ssy 0,D}^2\,\big)$,
\eqref{citah5} yields
\begin{equation}\label{citah6}
\sum_{m=1}^{\ssy M}k_m\,\|\Delta^2W^m\|_{\ssy 0,D}^2
\leq\,\tfrac{1}{2}\,\|w_0\|_{\ssy 2,D}^2.
\end{equation}
Combining, now, \eqref{citah4}, \eqref{citah6} and
\eqref{H_equiv}, we obtain
\begin{equation}\label{citah7}
\left(\,\sum_{m=1}^{\ssy M}k_m\,\|E^m\|_{\ssy
0,D}^2\,\right)^{\frac{1}{2}}
\leq\,C\,h^{4}\,\|w_0\|_{\ssy {\bfdot H}^2}.
\end{equation}
\par
Let $r=2$. Then, by \eqref{ARA2}, \eqref{minus_equiv} and
\eqref{citah3}, we obtain
\begin{equation}\label{citah8}
\begin{split}
\sum_{m=1}^{\ssy M}k_m\,\|E^m\|_{\ssy 0,D}^2
\leq&\,C\,h^{4}\,\sum_{m=1}^{\ssy M}k_m\,
\left\|\Delta^2W^m\right\|_{\ssy {\bfdot H}^{-1}}^2\\
\leq&\,C\,h^{4}\,\left[-\sum_{m=1}^{\ssy M}k_m\,
(T_{\ssy E}\Delta^2W^m,\Delta^2W^m)_{\ssy 0,D}\right]\\
\leq&\,C\,h^{4}\,\left[-\sum_{m=1}^{\ssy M}k_m\,
(\Delta W^m,\Delta^2W^m)_{\ssy 0,D}\right].\\
\end{split}
\end{equation}
Taking the $(\cdot,\cdot)_{\ssy 0,D}-$inner product of
\eqref{BEDet2} with $\Delta W^m$, integrating by parts and summing
with respect to $m$ from $1$ up to $M$, it follows that
\begin{equation}\label{citah9}
\sum_{m=1}^{\ssy M}\big(\nabla W^m-\nabla W^{m-1}, \nabla
W^m\big)_{\ssy 0,D} -\sum_{m=1}^{\ssy M} k_m\,(\Delta^2 W^m,\Delta
W^m)_{\ssy 0,D}=0.
\end{equation}
Since
$ \sum_{m=1}^{\ssy M}(\nabla W^m-\nabla W^{m-1}, \nabla W^m)_{\ssy
0,D} \ge\,\tfrac{1}{2}\,\big[\, \|\nabla W^{\ssy M}\|^2_{\ssy 0,D}
-\|\nabla W^0\|_{\ssy 0,D}^2\,\bigr], $
\eqref{citah9} yields
\begin{equation}\label{citah10}
-\sum_{m=1}^{\ssy M}k_m\,(\Delta^2 W^m,\Delta W^m)_{\ssy 0,D}
\leq\,\tfrac{1}{2}\,\|w_0\|_{\ssy 1,D}^2.
\end{equation}
Combining \eqref{citah8}, \eqref{citah10} and
\eqref{H_equiv} we get
\begin{equation}\label{citah11}
\left(\,\sum_{m=1}^{\ssy M}k_m\,\|E^m\|_{\ssy
0,D}^2\,\right)^{\frac{1}{2}}
\leq\,C\,h^2\,\|w_0\|_{\ssy {\bfdot H}^1}.
\end{equation}
\par
Let $r=4$. Then, observing that $\Delta^2 W^m\in {\bfdot H}^1(D)$
and using the relations \eqref{ARA2}, \eqref{minus_equiv} and
\eqref{citah3}, we obtain
\begin{equation}\label{citah_c1}
\begin{split}
\sum_{m=1}^{\ssy M}k_m\,\|E^m\|_{\ssy 0,D}^2
\leq&\,C\,h^{10}\,\sum_{m=1}^{\ssy M}k_m\,
\left\|\Delta^2W^m\right\|_{\ssy {\bfdot H}^{1}}^2\\
\leq&\,C\,h^{10}\,\sum_{m=1}^{\ssy M}k_m\,
\left\|\Delta^3W^m\right\|_{\ssy {\bfdot H}^{-1}}^2\\
\leq&\,C\,h^{10}\,\left[-\sum_{m=1}^{\ssy M}k_m\,
(T_{\ssy E}\Delta^3W^m,\Delta^3W^m)_{\ssy 0,D}\right]\\
\leq&\,C\,h^{10}\,\left[-\sum_{m=1}^{\ssy M}k_m\,
(\Delta^2 W^m,\Delta^3W^m)_{\ssy 0,D}\right].\\
\end{split}
\end{equation}
After, Applying the operator $\Delta$ on \eqref{BEDet2}, take the
$(\cdot,\cdot)_{\ssy 0,D}-$inner product of the obtained relation
with $\Delta^2 W^m$, integrate by parts and sum with respect to
$m$ from $1$ up to $M$, to get
\begin{equation}\label{citah_c2}
-\sum_{m=1}^{\ssy M}\big(\Delta W^m-\Delta W^{m-1}, \Delta^2
W^m\big)_{\ssy 0,D} -\sum_{m=1}^{\ssy M} k_m\,(\Delta^3
W^m,\Delta^2 W^m)_{\ssy 0,D}=0.
\end{equation}
Also, we have
\begin{equation}\label{citah_c22}
\begin{split}
-\sum_{m=1}^{\ssy M}(\Delta W^m-\Delta W^{m-1}, \Delta^2
W^m)_{\ssy 0,D}\ge\,&\sum_{m=1}^{\ssy M}\left(\,\|\Delta
W^m\|_{\ssy {\bfdot H}^1}^2-\|\Delta W^m\|_{\ssy{\bfdot H}^1}
\,\|\Delta
W^{m-1}\|_{\ssy{\bfdot H}^1}\,\right)\\
\ge\,&\tfrac{1}{2}\,\left(\, \|\Delta W^{\ssy M}\|^2_{\ssy{\bfdot
H}^1}
-\|\Delta W^0\|_{\ssy{\bfdot H}^1}\,\right).\\
\end{split}
\end{equation}
Thus, \eqref{citah_c2} and \eqref{citah_c22} yield
\begin{equation}\label{citah_c3}
-\sum_{m=1}^{\ssy M}k_m\,(\Delta^3 W^m,\Delta^2 W^m)_{\ssy 0,D}
\leq\,\tfrac{1}{2}\,\|w_0\|_{\ssy{\bfdot H}^3}^2.
\end{equation}
Combining \eqref{citah_c1} and \eqref{citah_c3} we get
\begin{equation}\label{citah_c4}
\left(\,\sum_{m=1}^{\ssy M}k_m\,\|E^m\|_{\ssy
0,D}^2\,\right)^{\frac{1}{2}}
\leq\,C\,h^5\,\|w_0\|_{\ssy {\bfdot H}^3}.
\end{equation}
%
%
%
Thus, the relations \eqref{citah7}, \eqref{citah11} and
\eqref{citah_c4} yield \eqref{tiger_river1} for $\theta=1$.
\par
Since $T_{{\ssy B},h}(W_h^m-W_h^{m-1})+k_m\,W_h^m=0$ for
$m=1,\dots,M$, we obtain
\begin{equation*}
(\Delta T_{\ssy B,h}W_h^m-\Delta T_{\ssy B,h}W_h^{m-1},\Delta
T_{\ssy B,h}W^m_h)_{\ssy 0,D} +k_m\,\|W_h^m\|_{\ssy 0,D}^2,\quad
m=1,\dots,M,
\end{equation*}
which, along with \eqref{PanwFragma1} and \eqref{minus_equiv},
yields
\begin{equation}\label{citah_c111}
\begin{split}
\sum_{m=1}^{\ssy M} k_m\,\|W_h^m\|_{\ssy 0,D}^2
\leq&\,\tfrac{1}{2}\,\|\Delta T_{\ssy B,h}w_0\|_{\ssy 0,D}^2\\
\leq&\,C\,\|w_0\|_{\ssy {\bfdot H}^{-2}}.\\
\end{split}
\end{equation}
Now, using \eqref{BEDet2} and \eqref{TB-prop1}, we obtain
$(T_{\ssy E}W^m-T_{\ssy E}W^{m-1},T_{\ssy E}W^m)_{\ssy 0,D}
+k_m\,\|W^m\|_{\ssy 0,D}^2=0$ for $m=1,\dots,M$, which yields
$\|T_{\ssy E}W^m\|_{\ssy 0,D}^2 -\|T_{\ssy E}W^{m-1}\|_{\ssy
0,D}^2 +2\,k_m\,\|W^m\|_{\ssy 0,D}^2\leq 0$ for $m=1,\dots,M$.
Then, summing with respect to $m$ from $1$ up to $M$, and using
\eqref{ElReg1} and \eqref{minus_equiv} we obtain
\begin{equation}\label{Ydaspis904}
\begin{split}
\sum_{k=1}^{\ssy M}k_m\|W^m\|_{\ssy 0,D}^2
&\leq\,\tfrac{1}{2}\,\|T_{\ssy E}w^0\|_{\ssy 0,D}^2\\
&\leq\,C\,\|w_0\|_{\ssy{\bfdot H}^{-2}}^2.\\
\end{split}
\end{equation}
Finally, combine \eqref{citah_c111} with \eqref{Ydaspis904} to get
$\left(\,\sum_{m=1}^{\ssy M}k_m\,\|E^m\|_{\ssy
0,D}^2\,\right)^{\frac{1}{2}}\leq\,C\,\|w_0\|_{\ssy {\bfdot
H}^{-1}}$,
which is equivalent to \eqref{tiger_river1} for $\theta=0$.
\end{proof}
%
%
%
%
\par
The following lemma ensures the existence of a continuous
Green function for the solution operator of a discrete
elliptic problem.
%
%
\begin{lemma}\label{prasinolhmma}
Let $r\in\{2,3,4\}$, $\epsilon>0$, $f\in L^2(D)$ and $\psi_h\in
M_h$ such that
\begin{equation}\label{bohqos_pro}
\epsilon\,B_h\psi_h+\psi_h=P_hf.
\end{equation}
Then there exists a function $G_{h,\epsilon}\in C({\overline {D\times D}})$
such that
\begin{equation}\label{prasinogreen}
\psi_h(x)=\int_{\ssy D} G_{h,\epsilon}(x,y)\,f(y)\,dy
\quad\forall\,x\in{\overline D}
\end{equation}
and $G_{h,\epsilon}(x,y)=G_{h,\epsilon}(y,x)$ for $x,y\in {\overline D}.$
\end{lemma}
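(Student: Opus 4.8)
The plan is to repeat, almost verbatim, the construction used in the proof of Lemma~\ref{prasino}: diagonalize $B_h$ simultaneously with the $L^2(D)$ inner product and then read off the kernel of the solution operator of \eqref{bohqos_pro} as an explicit finite sum.

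First I would set $n_h:=\dim(M_h)$ and invoke the simultaneous diagonalization already employed in Lemma~\ref{prasino}: there exists a basis $(\chi_j)_{j=1}^{n_h}$ of $M_h$ which is $L^2(D)$-orthonormal, i.e. $(\chi_i,\chi_j)_{\ssy 0,D}=\delta_{ij}$, and such that $(\Delta\chi_i,\Delta\chi_j)_{\ssy 0,D}=\lambda_{h,i}\,\delta_{ij}$ with $\lambda_{h,i}>0$ for $i,j=1,\dots,n_h$; the positivity is guaranteed because, by $M_h\subset H^2(D)\cap H_0^1(D)$ and the Poincar\'e--Friedrich inequality \eqref{Poincare}, the form $(\varphi,\chi)\mapsto(\Delta\varphi,\Delta\chi)_{\ssy 0,D}$ is an inner product on $M_h$. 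From \eqref{DLaplacianI} this basis also satisfies $B_h\chi_j=\lambda_{h,j}\,\chi_j$.

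Next I would solve \eqref{bohqos_pro} in this basis. Writing $\psi_h=\sum_{j=1}^{n_h}c_j\,\chi_j$, testing \eqref{bohqos_pro} against $\chi_i$ and using the defining property of $P_h$ gives $(\epsilon\,\lambda_{h,i}+1)\,c_i=(f,\chi_i)_{\ssy 0,D}$, hence $c_i=\frac{(f,\chi_i)_{\ssy 0,D}}{1+\epsilon\,\lambda_{h,i}}$, which is well defined since $1+\epsilon\,\lambda_{h,i}>0$ (so, incidentally, $\epsilon B_h+I$ is invertible and $\psi_h$ is unique). Therefore
\[
\psi_h(x)=\sum_{j=1}^{n_h}\frac{\chi_j(x)}{1+\epsilon\,\lambda_{h,j}}\,(f,\chi_j)_{\ssy 0,D}
=\int_{\ssy D}\Bigg(\sum_{j=1}^{n_h}\frac{\chi_j(x)\,\chi_j(y)}{1+\epsilon\,\lambda_{h,j}}\Bigg)\,f(y)\,dy,
\]
so \eqref{prasinogreen} holds with $G_{h,\epsilon}(x,y):=\sum_{j=1}^{n_h}\frac{\chi_j(x)\,\chi_j(y)}{1+\epsilon\,\lambda_{h,j}}$.

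Finally, since each $\chi_j\in M_h$ is a continuous piecewise polynomial on $\overline D$ (equivalently, $M_h\subset H^2(D)\hookrightarrow C(\overline D)$ for $d\le 3$), the finite sum defining $G_{h,\epsilon}$ lies in $C(\overline{D\times D})$, and the symmetry $G_{h,\epsilon}(x,y)=G_{h,\epsilon}(y,x)$ is immediate from the formula. I do not expect any genuine obstacle here; the only things to keep straight are the strict positivity of the $\lambda_{h,j}$ (which makes the discrete problem uniquely solvable and the denominators nonzero) and the continuity of the finite element functions, both of which are standard and already used earlier in the paper.
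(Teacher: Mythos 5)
Your proposal is correct and follows essentially the same route as the paper's own proof: both expand $\psi_h$ in the simultaneously $L^2$-orthonormal and $\gamma_h$-orthogonal basis $(\chi_j)_{j=1}^{n_h}$ constructed in Lemma~\ref{prasino}, solve \eqref{bohqos_pro} coefficientwise to get $\mu_i=(1+\epsilon\lambda_{h,i})^{-1}(f,\chi_i)_{\ssy 0,D}$, and read off $G_{h,\epsilon}(x,y)=\sum_{j=1}^{n_h}\frac{\chi_j(x)\chi_j(y)}{1+\epsilon\lambda_{h,j}}$. Your added remarks on the strict positivity of the $\lambda_{h,j}$ and the continuity of the basis functions are harmless elaborations of points the paper leaves implicit.
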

%
%
%
%
\begin{proof}
Keeping the notation and the constructions of the proof of
Lemma~\ref{prasino}, we conclude that there are
$(\mu_j)_{j=1}^{n_h}\subset\rset$ such that
$\psi_h=\sum_{j=1}^{n_h}\mu_j\,\chi_j$. Thus, \eqref{bohqos_pro}
is equivalent to $\mu_i
=\frac{1}{1+\epsilon\lambda_{h,i}}\,(f,\chi_i)_{\ssy 0,D}$ for
$i=1,\dots,n_h$. Finally, we obtain \eqref{prasinogreen} with
$G_{h,\epsilon}(x,y)=\sum_{j=1}^{n_h}
\frac{\chi_j(x)\chi_j(y)}{1+\epsilon\lambda_{h,j}}$.
\end{proof}
\par
We are ready to compare, in the discrete in time
$L^{\infty}_t(L^2_{\ssy P}(L^2_x))$ norm, the time-discrete with
the fully-discrete Backward Euler approximations of ${\widehat
u}$.
%
%
%
%
%
%
\begin{proposition}\label{Tigrakis}
Let $r\in\{2,3,4\}$, ${\widehat u}$ be the solution of the problem
\eqref{AC2}, $({\widehat U}_h^m)_{m=0}^{\ssy M}$ be the Backward
Euler fully-discrete approximations of ${\widehat u}$ specified in
\eqref{FullDE1}-\eqref{FullDE2}, and $({\widehat U}^m)_{m=0}^{\ssy
M}$ be the Backward Euler time-discrete approximations of
${\widehat u}$ specified in \eqref{BackE1}-\eqref{BackE2}.
If the partition $(\tau_m)_{m=0}^{\ssy M}$ is uniform, i.e.
$k_m=\Delta\tau$ for $m=1,\dots,M$, then, there exists a constant
$C>0$, independent of $\Delta{x}$, $\Delta{t}$, $h$, $M$ and
$\Delta\tau$,  such that
\begin{equation}\label{Lasso1}
\max_{1\leq{m}\leq {\ssy M}}\left\{{\mathbb E}\left[
\big\|{\widehat U}_h^m -{\widehat U}^m\big\|^2_{\ssy
0,D}\right]\right\}^{\half} \leq\,C\,\epsilon^{-\frac{1}{2}}
\,\,\,h^{\nu(r,d)-\epsilon},\quad\forall\,\epsilon\in(0,\nu(r,d)]
\end{equation}
where $\nu(r,d)$ has been defined in \eqref{soho4}.
\end{proposition}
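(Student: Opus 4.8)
The plan is to follow the scheme of the proof of Theorem~\ref{coconut12} almost verbatim, with the role of the continuous solution operator ${\mathcal S}$ played by the time‑discrete Backward Euler operator and that of the semidiscrete operator ${\mathcal S}_h$ played by the fully‑discrete Backward Euler operator, so that the deterministic comparison estimate of Proposition~\ref{Aygo_Kokora} enters precisely where \eqref{karx101} entered there.

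First I would record discrete Green‑kernel representations of both families over the uniform partition. With $\Lambda:=(I+\Delta\tau\,\Delta^2)^{-1}$ as in the proof of Theorem~\ref{TimeDiscreteErr1} and $\Lambda_h:=(I+\Delta\tau\,B_h)^{-1}:M_h\to M_h$, an induction on \eqref{FullDE2} gives ${\widehat U}_h^m=\sum_{j=1}^m\int_{\ssy\Delta_j}\Lambda_h^{m-j+1}P_h{\widehat W}(\tau,\cdot)\,d\tau$; using the $L^2(D)$‑orthonormal, $\gamma_h$‑orthogonal basis $(\chi_j)$ of $M_h$ from the proof of Lemma~\ref{prasino} (for which $B_h\chi_j=\lambda_{h,j}\chi_j$, so that $\Lambda_h^\ell P_h$ is exactly the operator occurring in Lemma~\ref{prasinolhmma} with $\epsilon=\Delta\tau$ when $\ell=1$), the operator $\Lambda_h^\ell P_h$ has the continuous, symmetric Green kernel $G_{{\ssy\Lambda_h},\ell}(x,y)=\sum_j(1+\Delta\tau\,\lambda_{h,j})^{-\ell}\,\chi_j(x)\,\chi_j(y)$. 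Hence ${\widehat U}_h^m(x)=\int_0^{\tau_m}\!\int_{\ssy D}{\widehat{\mathcal K}}_{h,m}(\tau;x,y)\,{\widehat W}(\tau,y)\,dyd\tau$ with ${\widehat{\mathcal K}}_{h,m}(\tau;x,y):=\sum_{j=1}^m{\mathcal X}_{\ssy\Delta_j}(\tau)\,G_{{\ssy\Lambda_h},m-j+1}(x,y)$, the discrete analogue of the representation of ${\widehat U}^m$ through the kernel ${\widehat{\mathcal K}}_m$ obtained in the proof of Theorem~\ref{TimeDiscreteErr1}. Subtracting the two and proceeding exactly as there (It{\^o} isometry \eqref{Ito_Isom}, the definition of ${\widehat W}$, the Cauchy--Schwarz inequality and \eqref{HSxar}), the quantity ${\mathcal E}^m:={\mathbb E}\big[\|{\widehat U}_h^m-{\widehat U}^m\|_{\ssy 0,D}^2\big]$ is bounded by $\int_0^{\tau_m}\|{\widehat{\mathcal K}}_{h,m}(\tau;\cdot,\cdot)-{\widehat{\mathcal K}}_m(\tau;\cdot,\cdot)\|_{\ssy\rm HS}^2\,d\tau=\sum_{\ell=1}^m\Delta\tau\,\|\Lambda_h^{\ell}P_h-\Lambda^{\ell}\|_{\ssy\rm HS}^2$.

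The crucial observation is then that, for each $\alpha\in\nset^d$, the functions $\Lambda_h^\ell P_h\varepsilon_\alpha$ and $\Lambda^\ell\varepsilon_\alpha$ coincide with the fully‑discrete iterate $W_h^\ell$ of \eqref{DetFD1}--\eqref{DetFD2} and the time‑discrete iterate $W^\ell$ of \eqref{BEDet1}--\eqref{BEDet2}, respectively, for the deterministic problem \eqref{Det_Parab} with $w_0=\varepsilon_\alpha\in{\bfdot H}^3(D)$ and uniform step $\Delta\tau$. Expanding the Hilbert--Schmidt norm in the system $(\varepsilon_\alpha)_{\alpha\in\nset^d}$, enlarging the sum over $\ell$ up to $M$, and invoking Proposition~\ref{Aygo_Kokora} (applied with $w_0=\varepsilon_\alpha$), I obtain, for every $\theta\in[0,1]$ for which ${\widetilde\xi}(r,\theta)<-\tfrac d2$,
\[
{\mathcal E}^m\leq\sum_{\alpha\in\nset^d}\sum_{\ell=1}^{\ssy M}k_\ell\,\|W^\ell-W_h^\ell\|_{\ssy 0,D}^2\leq\,C\,h^{2{\widetilde\nu}(r,\theta)}\sum_{\alpha\in\nset^d}\|\varepsilon_\alpha\|_{\ssy{\bfdot H}^{{\widetilde\xi}(r,\theta)}}^2=\,C\,h^{2{\widetilde\nu}(r,\theta)}\,\pi^{2{\widetilde\xi}(r,\theta)}\sum_{\alpha\in\nset^d}|\alpha|_{\ssy\nset^d}^{2{\widetilde\xi}(r,\theta)}.
\]
Finally, exactly as at the end of the proof of Theorem~\ref{coconut12}, I would pick $\theta=\theta(\epsilon)\in[0,1)$ so that $2{\widetilde\xi}(r,\theta)=-(d+c_\star\epsilon)$ with $c_\star=3$ if $r=2$ and $c_\star=2$ if $r\in\{3,4\}$; a short computation from \eqref{Basilico_4} then gives $2{\widetilde\nu}(r,\theta)=2\,\nu(r,d)-2\epsilon$, while the constraint $\theta\ge0$ reduces to $\epsilon\le\nu(r,d)$, with $\nu(r,d)$ as in \eqref{soho4}. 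Hence Lemma~\ref{Series_Asym_1} yields ${\mathcal E}^m\leq C\,\epsilon^{-1}\,h^{2(\nu(r,d)-\epsilon)}$ for all $m$, and taking square roots and the maximum over $m$ gives \eqref{Lasso1}.

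I expect the only genuinely delicate point to be the first step — checking the discrete‑kernel representation of ${\widehat U}_h^m$ and, above all, that the passage from ${\mathcal E}^m$ to $\sum_\ell\Delta\tau\,\|\Lambda_h^\ell P_h-\Lambda^\ell\|_{\ssy\rm HS}^2$ via It{\^o} isometry and the Cauchy--Schwarz inequality is a word‑for‑word repetition of the corresponding estimate in the proof of Theorem~\ref{TimeDiscreteErr1}; once the identifications $\Lambda_h^\ell P_h\varepsilon_\alpha=W_h^\ell$ and $\Lambda^\ell\varepsilon_\alpha=W^\ell$ are in hand, everything else is a direct application of Proposition~\ref{Aygo_Kokora} and Lemma~\ref{Series_Asym_1}, the regularity hypothesis $w_0\in{\bfdot H}^3(D)$ of Proposition~\ref{Aygo_Kokora} being satisfied trivially by the smooth eigenfunctions $\varepsilon_\alpha$.
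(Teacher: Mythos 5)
Your proposal is correct and follows essentially the same route as the paper's own proof: the discrete Green-kernel representation of ${\widehat U}_h^m$ via $\Lambda_h$ (Lemma~\ref{prasinolhmma}), the It{\^o} isometry and Cauchy--Schwarz reduction to $\sum_{\ell}\Delta\tau\,\|\Lambda^{\ell}-\Lambda_h^{\ell}\|_{\ssy\rm HS}^2$, the identification with the deterministic iterates so that Proposition~\ref{Aygo_Kokora} applies with $w_0=\varepsilon_{\alpha}$, and the final summation over $\alpha$ with the $\theta$--$\epsilon$ calibration and Lemma~\ref{Series_Asym_1} exactly as in Theorem~\ref{coconut12}. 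The only cosmetic difference is that you write $\Lambda_h^{\ell}P_h$ where the paper absorbs $P_h$ into the definition of $\Lambda_h$; these agree since $P_h$ is the identity on $M_h$.
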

%
%
%
%
%
%
%
%
%
\begin{proof}
Let $I:L^2(D)\to L^2(D)$ be the identity operator and
$\Lambda_h:L^2(D)\to S^r_h$ be the inverse discrete elliptic
operator given by $\Lambda_h:=(I+\Delta\tau\,B_h)^{-1}P_h$ and
having a Green function $G_{h,{\ssy\Delta\tau}}$ (cf.
Lemma~\ref{prasinolhmma}). Also, for $\ell\in\nset$, we denote by
$G_{h,{\ssy\Delta\tau},\ell}$ the Green function of
$\Lambda_h^{\ell}$.
Using, now, an induction argument, from \eqref{FullDE2} we
conclude that
${\widehat U}_h^m=\sum_{j=1}^{\ssy m} \int_{\ssy\Delta_j}
\Lambda_h^{m-j+1}{\widehat W}(\tau,\cdot)\,d\tau$, $m=1,\dots,M$,
which is written, equivalently, as follows:
\begin{equation}\label{Anaparastash2}
{\widehat U}_h^m(x)=\int_0^{\tau_m}\!\!\!\int_{\ssy D}
\,{\widehat{\mathcal D}}_{h,m}(\tau;x,y)\,{\widehat W}(\tau,y)
\,dyd\tau\quad\forall\,x\in{\overline D}, \ken m=1,\dots,M,
\end{equation}
where
\begin{equation*}
{\widehat{\mathcal D}}_{h,m}(\tau;x,y)
:=\sum_{j=1}^m{\mathcal X}_{\ssy\Delta_j}(\tau)
\,G_{h,{\ssy\Delta\tau},m-j+1}(x,y)\quad\forall
\,\tau\in[0,T],\ken\forall\,x,y\in D.
\end{equation*}
Using \eqref{Anaparastash1}, \eqref{Anaparastash2}, the
It{\^o}-isometry property of the stochastic integral,
\eqref{HSxar} and the Cauchy-Schwarz inequality, we get
\begin{equation*}
\begin{split}
{\mathbb E}\left[
\|{\widehat U}^m-{\widehat U}_h^m\|_{\ssy 0,D}^2
\right]
&\leq\int_0^{\tau_m}\Big(\int_{\ssy D}\!\int_{\ssy D}
\,\big[{\widehat{\mathcal K}}_m(\tau;x,y)
-{\widehat{\mathcal D}}_{h,m}(\tau;x,y)\big]^2
\,dydx\Big)\,d\tau\\
&\leq\, \sum_{j=1}^m\int_{\ssy\Delta_{j}}\,\|\Lambda^{m-j+1}
-\Lambda_h^{m-j+1}\|_{\ssy\rm HS}^2\,d\tau,
\quad m=1,\dots,M,\\
\end{split}
\end{equation*}
where $\Lambda$ is the inverse elliptic operator defined in the
proof of Theorem~\ref{TimeDiscreteErr1}. Now, we use the
definition of the Hilbert-Schmidt norm and the deterministic error
estimate \eqref{tiger_river1}, to have
\begin{equation*}
\begin{split}
%
%
{\mathbb E}\left[ \|{\widehat U}^m-{\widehat U}_h^m\|_{\ssy 0,D}^2
\right] &\leq\,\sum_{j=1}^m\,\Delta\tau
\left[\,\sum_{\alpha\in\nset^d}
\|\Lambda^{m-j+1}\varepsilon_{\alpha}
-\Lambda^{m-j+1}_h\varepsilon_{\alpha}\|^2_{\ssy 0,D}
\,\right]\\
%
%
&\leq\,\sum_{\alpha\in\nset^d}\left[\, \sum_{j=1}^m\,\Delta\tau\,
\|\Lambda^j\varepsilon_{\alpha}
-\Lambda_h^j\varepsilon_{\alpha}\|^2_{\ssy 0,D}
\,\right]\\
&\leq\,C\,h^{2{\widetilde\nu}(r,\theta)}
\,\sum_{\alpha\in\nset^d}\|\varepsilon_{\alpha}\|^2_{\ssy {\bfdot
H}^{{\widetilde\xi}(r,\theta)}},\quad m=1,\dots,M,
\quad\forall\,\theta\in[0,1].\\
\end{split}
\end{equation*}
Thus, we arrive at
\begin{equation*}
\max_{1\leq{m}\leq{\ssy M}} {\mathbb E}\left[ \|{\widehat
U}^m-{\widehat U}_h^m\|_{\ssy 0,D}^2 \right]
\leq\,C\,h^{2{\widetilde\nu}(r,\theta)}\,\sum_{\alpha\in\nset^d}
|\alpha|_{\ssy \nset^d}^{2{\widetilde\xi}(r,\theta)},
\quad\forall\,\theta\in[0,1],
\end{equation*}
from which, requiring $-2\,{\widetilde\xi}(r,\theta)>d$,
\eqref{Lasso1}, easily, follows (cf. Theorem~\ref{coconut12}).
\end{proof}
%
%
%
%
%
%
%
\par
The available error estimates allow us to conclude a discrete in
time $L^{\infty}_t(L^2_{\ssy P}(L^2_x))$ convergence of the
Backward Euler fully-discrete approximations of ${\widehat u}$,
over a uniform partition of $[0,T]$.
%
%
%
%
\begin{theorem}\label{FFQEWR}
Let $r\in\{2,3,4\}$, $\nu(r,d)$ be defined by \eqref{soho4},
${\widehat u}$ be the solution of problem \eqref{AC2}, and
$({\widehat U}_h^m)_{m=0}^{\ssy M}$ be the Backward Euler
fully-discrete approximations of ${\widehat u}$ constructed by
\eqref{FullDE1}-\eqref{FullDE2}.
If the partition $(\tau_m)_{m=0}^{\ssy M}$ is uniform, i.e.,
$k_m=\Delta\tau$ for $m=1,\dots,M$, then, there exists a constant
$C>0$, independent of $T$, $h$, $\Delta\tau$, $\Delta{t}$ and
$\Delta{x}$, such that
\begin{equation*}
\max_{0\leq{m}\leq{\ssy M}}\left\{{\mathbb E}\left[ \|{\widehat
U}_h^m-{\widehat u}(\tau_m,\cdot)\|_{\ssy 0,D}^2\right]
\right\}^{\half} \leq\,C\,\left[
\,{\widetilde\omega}(\Delta\tau,\epsilon_1)
\,\,\Delta\tau^{\frac{4-d}{8}-\epsilon_1}
+\epsilon_2^{-\frac{1}{2}}\,h^{\nu(r,d)-\epsilon_2}\,\right],
\end{equation*}
for $\epsilon_1\in\big(0,\frac{4-d}{8}\big]$ and
$\epsilon_2\in\big(0,\nu(r,d)\big]$ where
${\widetilde\omega}(\Delta\tau,\epsilon_1):=\epsilon_1^{-\frac{1}{2}}
+(\Delta\tau)^{\epsilon_1}
(p_d(\Delta\tau^{\frac{1}{4}}))^{\half}$.
\end{theorem}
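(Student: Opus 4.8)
The plan is to obtain the estimate by a direct triangle‑inequality decomposition that separates the time‑discretization error from the space‑discretization error, each of which has already been bounded in the excerpt. First I would write, for $m=0,\dots,M$,
\begin{equation*}
\left\{{\mathbb E}\left[\|{\widehat U}_h^m-{\widehat u}(\tau_m,\cdot)\|_{\ssy 0,D}^2\right]\right\}^{\half}
\leq\left\{{\mathbb E}\left[\|{\widehat U}_h^m-{\widehat U}^m\|_{\ssy 0,D}^2\right]\right\}^{\half}
+\left\{{\mathbb E}\left[\|{\widehat U}^m-{\widehat u}(\tau_m,\cdot)\|_{\ssy 0,D}^2\right]\right\}^{\half},
\end{equation*}
which is just Minkowski's inequality in $L^2(\Omega,P)$ applied to the pointwise identity ${\widehat U}_h^m-{\widehat u}(\tau_m,\cdot)=({\widehat U}_h^m-{\widehat U}^m)+({\widehat U}^m-{\widehat u}(\tau_m,\cdot))$, where $({\widehat U}^m)_{m=0}^{\ssy M}$ denotes the Backward Euler time‑discrete approximations of ${\widehat u}$ from \eqref{BackE1}--\eqref{BackE2}.

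Next I would take the maximum over $0\le m\le M$ on both sides and bound the two resulting maxima separately. Since the partition is assumed uniform with $k_m=\Delta\tau$, Proposition~\ref{Tigrakis} applies verbatim and gives
$\max_{1\le m\le{\ssy M}}\{{\mathbb E}[\|{\widehat U}_h^m-{\widehat U}^m\|_{\ssy 0,D}^2]\}^{\half}\le C\,\epsilon_2^{-\frac{1}{2}}\,h^{\nu(r,d)-\epsilon_2}$ for $\epsilon_2\in(0,\nu(r,d)]$; likewise Theorem~\ref{TimeDiscreteErr1} gives $\max_{1\le m\le{\ssy M}}\{{\mathbb E}[\|{\widehat U}^m-{\widehat u}(\tau_m,\cdot)\|_{\ssy 0,D}^2]\}^{\half}\le C\,{\widetilde\omega}(\Delta\tau,\epsilon_1)\,\Delta\tau^{\frac{4-d}{8}-\epsilon_1}$ for $\epsilon_1\in(0,\tfrac{4-d}{8}]$, with ${\widetilde\omega}$ exactly the quantity stated in the theorem. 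The only extra point is the index $m=0$: here ${\widehat U}_h^0=0$ by \eqref{FullDE1}, and ${\widehat u}(0,\cdot)=0$ by the initial condition in \eqref{AC2}, so the $m=0$ term of the left‑hand side is zero and does not affect the maximum. Adding the two bounds yields the asserted inequality.

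There is essentially no serious obstacle here; the proof is a two‑line combination of Theorem~\ref{TimeDiscreteErr1} and Proposition~\ref{Tigrakis} via the triangle inequality, once one has checked that both of those results are invoked under the hypotheses they require (uniform partition, which is assumed) and that the endpoint case $m=0$ is trivial. The ``hard part'' — establishing the two component estimates, in particular the separate treatment of the spatial error through the deterministic estimate \eqref{tiger_river1} and of the temporal error through \eqref{Ydaspis900} and the series bounds \eqref{ASYM_1}--\eqref{ASYM_2} — has already been carried out earlier in the paper, so nothing further is needed.
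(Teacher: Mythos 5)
Your proposal is correct and is exactly the paper's argument: the paper's proof of Theorem~\ref{FFQEWR} simply combines the bounds \eqref{Lasso1} (Proposition~\ref{Tigrakis}) and \eqref{ElPasso} (Theorem~\ref{TimeDiscreteErr1}) via the triangle inequality through the time-discrete approximations ${\widehat U}^m$, just as you do. Your additional remark on the trivial $m=0$ case is a harmless bit of extra care.
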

%
%
%
%
%
%
%
%
%
%
%
\begin{proof}
The estimate is a simple consequence of the error bounds
\eqref{Lasso1} and \eqref{ElPasso}.
\end{proof}
%
%
%
%
%
%
%
%
%
\def\cprime{$'$}
\def\cprime{$'$}
%

%
%
\appendix
%
%
%
\section{ }\label{Gatos_Rex_1}
\par\noindent
\begin{proof}[Proof of \eqref{ASYM_1}]
Let $\epsilon\in(0,2]$. First, we observe that
\begin{equation*}
\begin{split}
\sum_{n=1}^{\infty}\tfrac{1}{n^{1+c_{\star}\epsilon}}
\leq&\,1+\int_1^{+\infty}\tfrac{1}{x^{1+c_{\star}\epsilon}}\;dx\\
\leq&\,\left(2+\tfrac{1}{c_{\star}}\right)\tfrac{1}{\epsilon},
\end{split}
\end{equation*}
which easily yields \eqref{ASYM_1} for $d=1$. For $d=2$, we have
\begin{equation*}
\begin{split}
\sum_{\alpha\in\nset^2}|\alpha|_{\ssy\nset^2}^{-(2+c_{\star}\epsilon)}
\leq&\,2\,\sum_{n=1}^{\infty}\tfrac{1}{(1+n^2)^{\frac{2+c_{\star}\epsilon}{2}}}
+\int_{(1,+\infty)^2}\,|x|^{-(2+c_{\star}\epsilon)}\;dx\\
\leq&\,2\,\sum_{n=1}^{\infty}\tfrac{1}{n^{1+c_{\star}\epsilon}}
+\int_1^{\infty}\!\!\int_0^{\frac{\pi}{2}}\,r^{-(1+c_{\star}\epsilon)}\;drd\theta\\
\leq&\,C\,\epsilon^{-1}.\\
\end{split}
\end{equation*}
For $d=3$, using \eqref{ASYM_1} for $d=2$, we proceed similarly as
follows
\begin{equation*}
\begin{split}
\sum_{\alpha\in\nset^3}|\alpha|_{\ssy\nset^3}^{-(3+c_{\star}\epsilon)}
\leq&\,3\,\sum_{\beta\in\nset^2}(1+|\beta|_{\ssy\nset^2}^2)^{-\frac{3+c_{\star}\epsilon}{2}}
+\int_{(1,+\infty)^3}\,|x|^{-(3+c_{\star}\epsilon)}\;dx\\
\leq&\,3\,\sum_{\beta\in\nset^2}|\beta|_{\ssy\nset^2}^{-(2+c_{\star}\epsilon)}
+\int_1^{\infty}\!\!\int_0^{\frac{\pi}{2}}\!\!\int_0^{\frac{\pi}{2}}
\,\sin(\theta)\,r^{-(1+c_{\star}\epsilon)}\;drd\theta d\phi\\
\leq&\,C\,\epsilon^{-1}.\\
\end{split}
\end{equation*}
\end{proof}
%
%
%
%
%
\section{ }\label{Gatos_Rex_2}
\par\noindent
\begin{proof}[Proof of \eqref{ASYM_2}]
First, we recall from \cite{KZ2008} that
$\sum_{k=1}^{\infty}
\tfrac{1-e^{-k^4\pi^4\delta}}{k^4\pi^4}\leq\,C\,(1
+\delta^{\frac{1}{4}})\,\delta^{\frac{3}{4}}$.
For $d=2$, using the latter inequality, we have
\begin{equation*}
\begin{split}
\sum_{\alpha\in\nset^2}
\tfrac{1-e^{-\lambda_{\alpha}^2\,\delta}}{\lambda_{\alpha}^2}\leq&\,
2\,\sum_{n=1}^{\infty}\tfrac{1-e^{-\pi^4(1+k^2)^2\delta}}{\pi^4(1+k^2)^2}
+\int_{(1,+\infty)^2}
\tfrac{1-e^{-\pi^4|x|^4\,\delta}}{\pi^4\,|x|^4}\;dx\\
\leq&2\,\sum_{n=1}^{\infty}\tfrac{1-e^{-4\pi^4k^4\delta}}{\pi^4k^4}
+\int_0^{\frac{\pi}{2}}\!\int_1^{+\infty}
\tfrac{1-e^{-\pi^4r^4\delta}}{\pi^4\,r^3}\;dr d\theta\\
\leq&\,C\,(1+\delta^{\frac{1}{4}})\,\delta^{\frac{3}{4}}
+\tfrac{\delta^{\frac{1}{2}}}{2\pi}\,\int_0^{+\infty}
\tfrac{1-e^{-z^4}}{z^3}\;dz\\
\leq&\,C\,(1+\delta^{\frac{1}{4}})\,\delta^{\frac{3}{4}}
+\tfrac{\delta^{\frac{1}{2}}}{2\pi}\,\int_0^{+\infty}
e^{-z^3}\;dz,\\
\end{split}
\end{equation*}
which yields
$\sum_{\alpha\in\nset^2}
\tfrac{1-e^{-\lambda_{\alpha}^2\,\delta}}{\lambda_{\alpha}^2}\leq
\,C\,(1+\delta^{\frac{1}{4}}+\delta^{\frac{1}{2}})\,\delta^{\frac{1}{2}}$.
Finally, when $d=3$, using \eqref{ASYM_2} for $d=2$, we obtain
\begin{equation*}
\begin{split}
\sum_{\alpha\in\nset^3}
\tfrac{1-e^{-\lambda_{\alpha}^2\delta}}{\lambda_{\alpha}^2}\leq&\,
3\,\sum_{\beta\in\nset^2}
\tfrac{1-e^{-\pi^4(1+|\beta|_{\nset^2}^2)^2\delta}}
{\pi^4(1+|\beta|_{\nset^2}^2)^2} +\int_{(1,+\infty)^3}
\tfrac{1-e^{-\pi^4|x|^4\,\delta}}{\pi^4\,|x|^4}\;dx\\
\leq&3\,\sum_{\beta\in\nset^2}
\tfrac{1-e^{-4\lambda_{\beta}^2\delta}} {\lambda_{\beta}^2}
+\int_0^{\frac{\pi}{2}}
\!\int_0^{\frac{\pi}{2}}\!\int_0^{+\infty}\,\sin(\phi)\,
\tfrac{1-e^{-\pi^4r^4\delta}}{\pi^4\,r^2}\;dr d\theta d\phi\\
\leq&\,C\,(1+\delta^{\frac{1}{4}}+\delta^{\frac{1}{2}})\,\delta^{\frac{1}{2}}
+\tfrac{\delta^{\frac{1}{4}}}{2\pi^2}\,\int_0^{+\infty}
\tfrac{1-e^{-2\,z^4}}{z^2}\;dz\\
\leq&\,C\,(1+\delta^{\frac{1}{4}}+\delta^{\frac{1}{2}})\,\delta^{\frac{1}{2}}
+\tfrac{\delta^{\frac{1}{4}}}{2\pi^2}\,\int_0^{+\infty}
e^{-z^2}\;dz,\\
\end{split}
\end{equation*}
which yields
$\sum_{\alpha\in\nset^3}
\tfrac{1-e^{-\lambda_{\alpha}^2\,\delta}}{\lambda_{\alpha}^2}\leq
\,C\,(1+\delta^{\frac{1}{4}}+\delta^{\frac{1}{2}}
+\delta^{\frac{3}{4}})\,\delta^{\frac{1}{4}}$.
\end{proof}
%
%
%
%
%
\end{document}